\newif\ifexternalize
\newif\ifCompileAll
  \let\originaltikzcd=\tikzcd
  \let\originalendtikzcd=\endtikzcd
  \def\tikzcd{\tikzexternaldisable\originaltikzcd}
  \def\endtikzcd{\originalendtikzcd\tikzexternalenable}
\newtheorem{PARA}{}[section]
\newtheorem{theorem}[PARA]{Theorem}
\newtheorem{lemma}[PARA]{Lemma}
\newtheorem{proposition}[PARA]{Proposition}
\newtheorem{definition}[PARA]{Definition}
\theoremstyle{definition}
\newtheorem{remark}[PARA]{Remark}
\theoremstyle{plain}
\theoremstyle{plain}
\newtheorem{definition-proposition}[PARA]{Definition/Proposition}
\newcommand{\cF}{\mathcal{F}}
\newcommand{\cL}{\mathcal{L}}
\newcommand{\cM}{\mathcal{M}}
\newcommand{\cR}{\mathcal{R}}
\newcommand{\R}{{\mathbb{R}}}
\renewcommand{\S}{{\mathbb{S}}}
\newcommand{\Id}{\mathrm{ Id}}
\newcommand{\ev}{\mathrm{ev}}
\newcommand{\Crit}{\mathrm{ Crit}}
\def\NABLA#1{{\mathop{\nabla\kern-.5ex\lower1ex\hbox{$#1$}}}}
\def\Nabla#1{\nabla\kern-.5ex{}_{#1}}
\def\Tabla#1{\Tilde\nabla\kern-.5ex{}_{#1}}
\newcommand{\bM}{\cM^{\dag}}
\newcommand{\bMhyb}[1][\Xi,\Xi']{\cM^{\dag}_{#1}}
\newcommand{\MSLoops}{\cL}%
\let\xto=\xrightarrow%
\newcommand{\fibertimes}[2]{\underset{\mathllap{
      #1},\mathrlap{#2}}{\times}}
\newcommand{\CrocoUp}{\Gamma^{\uparrow}}
\newcommand{\CrocoDo}{\Gamma^{\downarrow}}
\newcommand{\wstar}{%
  \tikz[baseline=(x.base)]{\path(0,0)
    node[scale=-.85,white](x){$\bigstar$}circle(.6ex)
    node[yscale=-1](x){$\star$}circle(.6ex);}
}
\newcommand{\wostar}{\mathbin{%
    \tikz[baseline=(x.base)]{
%      \path[fill=white](0,0)circle(.7ex);
      \node[white,inner sep=0pt,outer sep=0pt,anchor=center,scale=1.5]
       at (0,0){$\ostar$};
      \node[inner sep=0pt,outer sep=0pt,anchor=center]
      (x) at (0,0){$\ostar$};}} }
\newcommand{\ostar}{\mathbin{{\scriptstyle\textrm{\ding{88}}}}}
\def\crocostar{$\wstar$}
\long\def\ifnodedefined#1#2#3{%
    \@ifundefined{pgf@sh@ns@#1}{#3}{#2}%
}
\long\def\undefinenode#1{%
    \expandafter\global\expandafter\let\csname pgf@sh@ns@#1\endcsname\@undefined%
}
\tikzset{%
  on each segment/.style={
    decorate,
    decoration={
      post length=1pt,
      pre length=1pt,
      show path construction,
      moveto code={},
      lineto code={\path[#1]
        (\tikzinputsegmentfirst)--(\tikzinputsegmentlast);},
      curveto code={\path[#1]
        (\tikzinputsegmentfirst).. controls
        (\tikzinputsegmentsupporta) and
        (\tikzinputsegmentsupportb)..
        (\tikzinputsegmentlast);},
      closepath code={\path[#1]
        (\tikzinputsegmentfirst)--(\tikzinputsegmentlast);},
    },
  },
  mid arrow/.style={postaction={decorate,decoration={markings,
        mark=at position .5 with {\arrow{#1}}%
      }}},%
  mid arrows/.style 2 args={%
    postaction={decorate,decoration={markings,
        mark=between positions 1/(#1+1) and 1-1/(#1+1) step
        {\pgfdecoratedpathlength/(#1+1)}
        with {\arrow{#2}}%
      }}},%
%  ->-/.style={postaction={on each segment={mid arrow={#1}}}},
  ->-/.style={mid arrow={#1}},
  ->-/.default={>[length={2pt 2}]},
}%
\tikzset{
  noEnd/.tip={.[sep=0pt 1.5]
    Butt Cap [length=0pt 1.5,width=0pt 1,sep=0pt 1.5]
    Butt Cap [length=0pt 1.5,width=0pt 1,sep=0pt 2]
    Butt Cap [length=0pt 1. ,width=0pt 1,sep=0pt 2.5]
    Butt Cap [length=0pt  .5,width=0pt 1]
  }
}
\newcommand{\tikzoutline}[2][color=white]{%
  \begingroup
  \pgfkeys{/jf/myoutline/.cd,#1}%
  \rlap{
    \textpdfrender{%
      TextRenderingMode=FillStrokeClip,%
      LineWidth=\jfmyoutlinewidth,%
      FillColor=\jfmyoutlinecolor,%
      StrokeColor=\jfmyoutlinecolor,%
      MiterLimit=1%
    }{#2}%
  }
  {#2}%
  \endgroup
}
\newif\ifcrocosavenodes
\newif\ifcrocovertical
\edef\crocosavenodename{CROCO\noexpand\thecroconodenumber}%
\gdef\crocotobeclosedwith{last}%
\tikzset{
  remember points/num/.initial=0,%
  remember points/count/.initial=11,%
  remember points/prefix/.initial=AUTO,%
  remember points/.style={postaction={decoration={
        markings,
        mark=between positions 0 and 1 step
        1/(\pgfkeysvalueof{/tikz/remember points/count}-1) with{
          \pgfmathtruncatemacro{\tmp}{-1+
            \pgfkeysvalueof{/pgf/decoration/mark info/sequence number}}
          \coordinate[name=\pgfkeysvalueof{/tikz/remember points/prefix}\tmp];
        }
      },
      decorate},%
    },%
  }
  \tikzset{
  croco/.style={draw,%
    postaction={decoration=crocofull, decorate},%
  },%
  croco/trajstyle/.style={blue,->-={>[width=1pt 5]},relative,out=10*rand,in=180+10*rand},%
  croco/evalstyle/.style={red},%
  croco/borderstyle/.style={thick,/tikz/croco/trajstyle},
  croco/save node prefix/.code={%
    \edef\crocosavenodename{#1\noexpand\thecroconodenumber}%
  },
  croco/save nodes prefix/.default={CROCO},%
  croco/save nodes/.is if=crocosavenodes,%
  croco/proba/.initial=.75,%
  croco/proba/.default=.75,%
  croco/vertical/.is if=crocovertical,%
  croco/vertical/.default=false,%
  croco/verticality/.initial=0,%
  croco/verticality/.default=0,%
  croco/tobeclosedwith/.initial=last,
  croco/tobecontinuedwith/.initial=first,
  croco/start/.is choice,%
  croco/start/length/.initial=0,%
  croco/start/first/.code=
    \gdef\crocotobeclosedwith{last}%
    \def\crocofirststate{startfirst}
    \pgfkeyssetvalue{/tikz/croco/start}{first}%
    \pgfkeyssetvalue{/tikz/croco/start/length}{.5},%
  croco/start/star/.code=
    \gdef\crocotobeclosedwith{closestar}
    \def\crocofirststate{startstar}
    \pgfkeyssetvalue{/tikz/croco/start}{star}
    \pgfkeyssetvalue{/tikz/croco/start/length}{.5},%
  croco/start/midup/.code=
    \gdef\crocotobeclosedwith{closemidup}
    \def\crocofirststate{startmidup}
    \pgfkeyssetvalue{/tikz/croco/start}{midup}
    \pgfkeyssetvalue{/tikz/croco/start/length}{0},%
  croco/start/continuestar/.code=
    \def\crocofirststate{continuestar}
    \pgfkeyssetvalue{/tikz/croco/start}{conitnue}
    \pgfkeyssetvalue{/tikz/croco/start/length}{.5},%
  croco/start/continuemidup/.code=
    \def\crocofirststate{continuemidup}
    \pgfkeyssetvalue{/tikz/croco/start}{conitnue}
    \pgfkeyssetvalue{/tikz/croco/start/length}{0},%
  croco/start/continue/.code=
    \edef\tmp{{/tikz/croco/start=\crocotobecontinuedwith}}
    \expandafter\pgfkeys\tmp
    \pgfkeyssetvalue{/tikz/croco/start}{continue},%
  croco/start/.default=first,%
  croco/end/.is choice,%
  croco/end/length/.initial=0,%
  croco/end/last/.code=
    \def\crocolaststate{endlast}
    \pgfkeyssetvalue{/tikz/croco/end}{last}
    \pgfkeyssetvalue{/tikz/croco/end/length}{.5},%
  croco/end/star/.code=
    \def\crocolaststate{endstar}
    \pgfkeyssetvalue{/tikz/croco/end}{star}
    \pgfkeyssetvalue{/tikz/croco/end/length}{.5},%
  croco/end/midup/.code=
    %\typeout{>>>>>to be continued with midup!!}
    %
    \def\crocolaststate{endmidup}
    \pgfkeyssetvalue{/tikz/croco/end}{midup}
    \pgfkeyssetvalue{/tikz/croco/end/length}{0},%
  croco/end/closestar/.code=
    \def\crocolaststate{endclosestar}
    \pgfkeyssetvalue{/tikz/croco/end}{closestar}
    \pgfkeyssetvalue{/tikz/croco/end/length}{.5},%
  croco/end/closemidup/.code=
    \def\crocolaststate{endclose}
    \pgfkeyssetvalue{/tikz/croco/end}{closemidup}
    \pgfkeyssetvalue{/tikz/croco/end/length}{0},%
  croco/end/close/.code=
    \edef\tmp{{/tikz/croco/end=\crocotobeclosedwith}}
    \expandafter\pgfkeys\tmp
    \pgfkeyssetvalue{/tikz/croco/end}{close},%
  croco/end/.default=last,%
}
\newcounter{croconodenumber}
\newdimen\crocosteplength
\newdimen\crocostepheight
\def\crocochoosenextstate{%
  \pgfmathparse{rnd}%
  \ifdim\pgfmathresult pt<\pgfkeysvalueof{/tikz/croco/proba}pt%.66pt%
    \def\croconextstate{upwait}%
  \else
    \def\croconextstate{starupwait}%
  \fi%
%  \ifdim \pgfdecoratedremainingdistance<\crocosteplength
%    \def\croconextstate{upwait}%
%  \fi
}%
\def\crocorot{\ifcrocovertical-\pgfdecoratedangle\else0\fi}
\def\crocorot{
  (Mod(-\pgfdecoratedangle,360)>180?
    Mod(-\pgfdecoratedangle,360)-360:
    Mod(-\pgfdecoratedangle,360)
    )*\pgfkeysvalueof{/tikz/croco/verticality}
}
    \pgfcoordinate{\crocosavenodename}{\pgfpointanchor{ctA}{center}}
    \pgfcoordinate{\crocosavenodename}{\pgfpointanchor{ctA}{center}}
    \pgfcoordinate{\crocosavenodename}{\pgfpointanchor{ctA}{center}}
    \pgfcoordinate{\crocosavenodename}{\pgfpointanchor{ctA}{center}}
    \pgfcoordinate{\crocosavenodename}{\pgfpointanchor{ctA}{center}}
      \pgfcoordinate{\crocosavenodename}{\pgfpointdecoratedpathlast}
    \pgfcoordinate{\crocosavenodename}{\pgfpointdecoratedpathlast}
    \pgfcoordinate{\crocosavenodename}{\pgfpointdecoratedpathlast}
    \newcommand\myto[2]{to[style/.expand once={#2}]
      coordinate[pos= 0/10](#10)%
      coordinate[pos= 1/10](#11)%
      coordinate[pos= 2/10](#12)%
      coordinate[pos= 3/10](#13)%
      coordinate[pos= 4/10](#14)%
      coordinate[pos= 5/10](#15)%
      coordinate[pos= 6/10](#16)%
      coordinate[pos= 7/10](#17)%
      coordinate[pos= 8/10](#18)%
      coordinate[pos= 9/10](#19)%
      coordinate[pos=10/10](#110)%
    }
    \tikzset{
      trajectory/.style={->-},
      othertraj/.style={densely dotted},
      eval/.style={blue}
    }
    \newcommand{\upperstep}[1]{
      \coordinate(g0) at (#1,3);
      \coordinate(g1) at (#1+2,3);
      \coordinate(y0) at (#1,2);
      \coordinate(y1) at (#1+2,2);
      \coordinate(x0) at (#1+1,1);

      \draw[trajectory](g1) to (y1) node{\tiny$\bullet$};
      \draw[trajectory](y1)to(x0);
      \path(g0)\myto{A}{out=0,in=180} (g1);
      \path(y0)\myto{B}{} (y1);
      \foreach \i in {2,4,6,8} {
        \draw[othertraj] plot[smooth] coordinates {(A\i)(B\i)(x0)};
      }
    }
    \newcommand{\lowerstep}[1]{
      \coordinate(y0) at (#1+1,2);
      \coordinate(x0) at (#1  ,1);
      \coordinate(x1) at (#1+2,1);
      \coordinate(a0) at (#1  ,0);
      \coordinate(a1) at (#1+2,0);

      \draw[trajectory](y0) to (x1) node{\tiny$\bullet$};
      \draw[trajectory](x1)to(a1);
      \path(x0)\myto{A}{}(x1);
      \draw[eval](a0)\myto{B}{}(a1);
      \foreach \i in {2,4,6,8} {
        \draw[othertraj] plot[smooth] coordinates {(y0)(A\i)(B\i)};
      }
    }
    \newcommand{\upperstarstepleft}[1]{
      \coordinate(g0) at (#1  ,3);
      \coordinate(s0) at (#1+2,3);
      \coordinate(y0) at (#1  ,2);
      \coordinate(x0) at (#1+1,1);

      \begin{pgfonlayer}{foreground}
        \draw[trajectory](s0) node{\crocostar} to (x0);        
      \end{pgfonlayer}
      \path(g0)\myto{A}{out=0,in=180}(s0);
      \path(y0)\myto{B}{}($(s0)!.5!(x0)$);
      \foreach \i in {2,4,6,8} {
        \draw[othertraj] plot[smooth] coordinates {(A\i)(B\i)(x0)};
      }
    }
    \newcommand{\upperstarstepright}[1]{
      \coordinate(g0) at (#1+2,3);
      \coordinate(s0) at (#1  ,3);
      \coordinate(y1) at (#1+2,2);
      \coordinate(x0) at (#1+1,1);

      \draw[trajectory](g0) to (y1)node{\tiny$\bullet$};
      \draw[trajectory](y1) to (x0);
      \path(s0)\myto{A}{out=0,in=180}(g0);
      \path($(s0)!.5!(x0)$)\myto{B}{}(y1);
      \foreach \i in {2,4,6,8} {
        \draw[othertraj] plot[smooth] coordinates {(A\i)(B\i)(x0)};
      }
    }
    \newcommand{\lowerstarstep}[1]{
      \coordinate(s0) at (#1+1,3);
      \coordinate(x0) at (#1  ,1);
      \coordinate(x1) at (#1+2,1);
      \coordinate(a0) at (#1  ,0);
      \coordinate(a1) at (#1+2,0);

      \draw[trajectory](s0) to (x1) node{\tiny$\bullet$};
      \draw[trajectory](x1)to(a1);
      \path(x0)\myto{A}{}(x1);
      \draw[eval](a0)\myto{B}{}(a1);
      \foreach \i in {2,4,6,8} {
        \draw[othertraj] plot[smooth] coordinates {(s0)(A\i)(B\i)};
      }
    }
    \newcommand{\firststep}[1]{
      \coordinate(s0) at (#1  ,3);
      \coordinate(x0) at (#1+1,1);
      \coordinate(a0) at (#1+1,0);
      \draw[trajectory](s0) to (x0);
      \draw[trajectory](x0) to (a0);
      \draw[eval](s0)\myto{A}{out=-90,in=-90}(a0);
      \foreach \i in {2,4,6,8} {
        \draw[othertraj] (s0)to[out=0,in=180-45](A\i);
      }
      \draw (x0) node{\tiny$\bullet$};
      \begin{pgfonlayer}{foreground}
        \draw(s0) node{\crocostar};
      \end{pgfonlayer}
    }
    \newcommand{\laststep}[1]{
      \coordinate(s0) at (#1+1,3);
      \coordinate(x0) at (#1  ,1);
      \coordinate(a0) at (#1  ,0);
%      \draw[trajectory](s0) to (x0);
      \draw[eval](s0)\myto{A}{out=90,in=90}(a0);
      \foreach \i in {2,4,6,8} {
        \draw[othertraj] (s0)to[out=0,in=180+45](A\i);
      }
      \draw (s0) node{\tiny$\bullet$};
    }
\newcommand{\footremember}[2]{
\footnote{#2}
\newcounter{#1}
\setcounter{#1}{\value{footnote}}
}
\newcommand{\footrecall}[1]{
\footnotemark[\value{#1}]
} 
\title{Fundamental group in stable Morse theory.}
\author{%
  Jean-Fran\c{c}ois Barraud\footremember{Toulouse}{IMT, Universit\'e de Toulouse}
  \and
  Florian Bertuol \footrecall{Toulouse}
}
\date{\today}
\begin{document}

%%%\begin{tikzpicture}
%%%  \pgfsetlayers{background,main,foreground}
%%%  \path[save path=\toto](0,0)to[relative,out= 45,in=180+45](4,0);
%%%  \path[save path=\titi](0,0)to[relative,out=-45,in=180-45](4,0);
%%%  \makeatletter
%%%  \g@addto@macro\toto\titi
%%%  \pgfsyssoftpath@setcurrentpath{\toto}
%%%  \draw[];
%%%  \makeatother
%%%
%%%  \begin{scope}[
%%%    decoration={segment length=50pt,amplitude=30pt},
%%%    croco/trajstyle/.style={
%%%      ->-={>[width=1pt 5]},
%%%      blue,relative,out=10*rand,in=180+10*rand},
%%%    croco/proba=.75,
%%%    croco/start=continue,
%%%    croco/end=close,
%%%    croco/save nodes=true,
%%%    croco/save node prefix=BB,
%%%    croco/vertical=true,
%%%    ]
%%%
%%%    \draw[
%%%    croco,
%%%    croco/start=first,
%%%    croco/end=midup,
%%%    ]
%%%    % (-3,0)to[out=90,in=90](3,0);
%%%    (-5,2)--(0,0);
%%%    
%%%    \draw[
%%%    croco,
%%%    croco/start=continue,
%%%    croco/end=last,
%%%    ]
%%%    % (3,0)to[out=-90,in=-90](-3,0);
%%%    (0,0)--(5,2);
%%%    
%%%
%%%  \end{scope}
%%%
%%%
%%%\end{tikzpicture}
%%%
%%%\end{document}

%%%%%%%%%%%%%%%%%%%%%%%% 
%\SkipFromHere
%%%%%%%%%%%%%%%%%%%%%%%%

\maketitle

%%%%%%%%%%%%%%%%%%%%%%%%%%%%%%%%%%%%%%%%%%%%%%
%%%%%%%%%%%%%%%%%%%%%%%%%%%%%%%%%%%%%%%%%%%%%%
%%%%%%%%%%%%%%%% Abstract %%%%%%%%%%%%%%%%%%%%
%%%%%%%%%%%%%%%%%%%%%%%%%%%%%%%%%%%%%%%%%%%%%%
%%%%%%%%%%%%%%%%%%%%%%%%%%%%%%%%%%%%%%%%%%%%%%

\begin{abstract}
  Morse theory relates algebraic topology invariants and the dynamics of
  the gradient flow of a Morse function, allowing to derive information
  about one out of the other. In the case of the homology, the
  construction extends to much more general settings, and in particular
  to the infinite dimensional setting of the celebrated Floer homology in
  symplectic geometry. The case of the fundamental group is quiet
  different however, and the object of this paper is to provide a
  dynamical description of the fundamental group in the stable Morse
  setting, which can be thought of as an intermediate case between the
  Morse and the Floer settings. 
\end{abstract}

%The object of this paper is to explain how to recover the fundamental
%group of a manifold out of the dynamics of the gradient of a stable
%Morse function on this manifold, i.e. of a Morse function on
%  $M\times\R^{N}$ that is quadratic at infinity.

\tableofcontents

\section{Introduction}

Morse theory owes a large part of its beauty to its ability to describe
abstract algebraic invariants in terms of dynamical properties of
gradient flows, allowing to use one to derive information on the other.
The case of the homology is of particular interest, as it gave birth to
the celebrated Floer homology, which is based on Morse theoretic ideas
applied to an infinite dimensional setting, and has an ever-growing
number of deep applications in the field of symplectic and contact
geometry.

An intermediate situation between Morse and Floer theory is the case of
stabilized Morse theory. In this paper, a stable Morse function on a
closed manifold $M$ will be a Morse function on $M\times\R^{N}$ that is
``quadratic at infinity'', i.e. coincide with a quadratic form outside of
a compact set.

Stable Morse functions are particularly relevant to symplectic
geometry, first of all as generating functions of some Lagrangians
submanifolds,  but also because stable Morse theory can be thought of
as a simplified finite dimensional model for Floer theory.

\medskip

In the present paper, the algebraic invariant under interest is the
fundamental group. The Morse theoretic description of this group is well
known, and provides a presentation of the group in terms of index $0$,
$1$ and $2$ critical points.

This construction was extended to the Floer setting by the first author,
at least regarding generators, in \cite{FloerPi1}. However, a satisfying
intrinsic description of the relations was still missing.

The object of this paper is to provide a full description of the
fundamental group, i.e. for both generators and relations, in the stable
Morse setting. This can be seen as a first step toward the Floer setting,
as all what is done in the current paper is expected to hold in the
Floer setting, the main difference lying in the discussion of transversality
issues.

\medskip

Since $M$ is a deformation retract of $M\times \R^{N}$, one may expect
little differences between the stable and non stable Morse description of
the fundamental group. However, the global dynamical behaviour of the
gradient flow of Morse and stabilized Morse functions differ drastically.
In particular, a well known result of M. Damian in \cite{Mihai} shows
that, in contrast to genuine Morse function, stable Morse functions may
have strictly less critical points than the minimal number of generators
of the fundamental group: this ruins any hope to have a construction of
the fundamental group in the stable setting directly derived from the
non stable setting (see section \ref{sec:MorseVersusStableMorse} for more
explicit examples of dynamical differences between the two situations
relevant to the construction of the fundamental group).

\bigskip

A (stable) Morse data on a closed manifold $M$ consists of a Morse
function $f$ on $M\times\R^{N_{+}}\times\R^{N_{-}}$ for some integers
$N_{+},N_{-}$ such that, for $(p,x_{+},x_{-})$ out of a compact set in
$M\times\R^{N_{+}}\times\R^{N_{-}}$, we have:
$$
f(p,x_{+},x_{-})=\Vert x_{+} \Vert^{2}-\Vert x_{-} \Vert^{2}
$$
where $\Vert \cdot \Vert$ is the canonical euclidean metric on
$\R^{N_{\pm}}$, together with a pseudo gradient for this function.

%Fix a stabilization $M\times\R^{N_{+}}\times\R^{N_{-}}$ of a closed
%manifold $M$. A stable Morse function on $M$ is a Morse function $f$
%on $M\times\R^{N_{+}}\times\R^{N_{-}}$ such that, for $(p,x_{+},x_{-})$
%out of a compact set in $M\times\R^{N_{+}}\times\R^{N_{-}}$, we have:
%$$
%f(p,x_{+},x_{-})=\Vert x_{+} \Vert^{2}-\Vert x_{-} \Vert^{2}
%$$
%where $\Vert \cdot \Vert$ is the canonical euclidean mertic on
%$\R^{N_{\pm}}$. Throughout this paper, the index of a critical point $x$
%of $f$ will stand for its Morse index shifted by $N_{-}$:
%\begin{equation}
%  \label{eq:index}
%  |x|=\ind(x)+N_{-}.
%\end{equation}
%
%A stable Morse data on $M$ consists in a stable
%Morse function $f$, and pseudo gradient $X$ for $f$ (see section
%\ref{sec:setting} for more precise definitions). 

Given a (stable) Morse data on a based manifold $(M,\star)$, the dynamics
of the flow of the pseudo-gradient allows for the definition of a
collection of moduli spaces, in particular, ``augmentation like''
trajectories as well as ``bouncing'' trajectories (see section
\ref{sec:moduli spaces} for more details), out of which the fundamental
group can be recovered:
\begin{theorem}\label{thm:Main}
  For a generic choice of (stable) Morse data, the relevant moduli spaces
  can be used to define a group $\cL$ of ``Morse loops'', together with a
  normal subgroup $\cR$ generated by preferred ``Morse relations'', such
  that there is a canonical ``evaluation map'' that induces an isomorphism
  $$
  \begin{tikzcd}
    \cL/\cR  \ar[r,"\sim","\ev_{*}"'] & \pi_{1}(M,\star).
  \end{tikzcd}
  $$
\end{theorem}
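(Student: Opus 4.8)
The plan is to analyze the canonical evaluation map directly and to establish that it is a well-defined isomorphism in three stages: that it is a homomorphism descending to $\cL/\cR$, that it is surjective, and that it is injective. The conceptual backbone is that the pseudo-gradient flow encodes a handle decomposition of $M\times\R^{N_{+}}\times\R^{N_{-}}$, a space that deformation retracts onto $M$ and therefore has fundamental group $\pi_{1}(M,\star)$; the group $\cL/\cR$ is then to be understood as an algebraic model of this fundamental group read off from the moduli spaces of section \ref{sec:moduli spaces}. Throughout, genericity of the (stable) Morse data is used to guarantee that the relevant moduli spaces of augmentation-like and bouncing trajectories are manifolds (with corners) of the expected dimension carrying well-defined evaluation maps to $M$.

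First I would check that $\ev_{*}$ is a well-defined homomorphism on $\cL/\cR$. Concatenation of Morse loops corresponds to concatenation of the evaluated loops in $M$, so $\ev$ is a homomorphism $\cL\to\pi_{1}(M,\star)$ on the nose. To see that $\cR\subseteq\ker\ev$, it suffices to treat a single generating Morse relation: each such relation is bounded by a $1$-dimensional moduli space, and the evaluation of the interior of this interval sweeps out a disk in $M$ whose boundary is precisely the evaluated loop associated to the two ends. Hence every generating relation maps to a null-homotopic loop, and $\ev_{*}\colon\cL/\cR\to\pi_{1}(M,\star)$ is defined.

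Surjectivity I would obtain by a transversality and decomposition argument. Given any based loop $\gamma$ in $M$, one makes it generic with respect to the stable manifolds of the critical points and then uses the flow to push it down; the points where $\gamma$ crosses the stable manifolds cut it into pieces, each of which is homotopic, after inserting the preferred connecting paths back to $\star$, to an elementary Morse loop. This exhibits $[\gamma]$ as a product of generators of $\cL$, so $\ev_{*}$ is onto. The quadratic-at-infinity hypothesis is what makes this pushing-down procedure globally well-behaved, since it controls the flow outside a compact set.

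The main difficulty is \emph{injectivity}. Suppose a Morse loop $\ell\in\cL$ satisfies $\ev(\ell)=1$ in $\pi_{1}(M,\star)$, so that $\ev(\ell)$ bounds a disk $u\colon D^{2}\to M$. The strategy is to make $u$ generic with respect to the Morse data and to read off, from the induced stratification of $D^{2}$ by preimages of the stable manifolds, a word expressing $\ell$ as a product of conjugates of the preferred relations: the $1$-skeleton of this stratification recovers the letters, while each $2$-cell contributes one of the generating relations of $\cR$. Equivalently, one interprets $u$ as a generic point in a $2$-dimensional moduli space whose codimension-$1$ boundary, consisting of broken configurations, is exactly the list of preferred Morse relations, so that the boundary identity forces $\ell\in\cR$. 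I expect the delicate points to be, first, the non-compactness introduced by the $\R^{N_{\pm}}$ factors, which must be tamed using the quadratic behaviour at infinity so that these parametrized moduli spaces remain compact after the relevant identifications, and second, the precise bookkeeping of bouncing trajectories, which is exactly the feature absent from ordinary Morse theory and which, as Damian's example in \cite{Mihai} shows, is genuinely needed when there are too few critical points to generate $\pi_{1}$ by the naive count. Compatibility of this argument with the generator count of \cite{FloerPi1} should provide a useful consistency check.
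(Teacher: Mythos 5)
Your proposal is, in substance, the classical (non-stable) Morse proof transplanted to the stable setting, and both of its main steps run into exactly the obstruction the paper is built to circumvent. For surjectivity you propose to ``use the flow to push $\gamma$ down'' and cut it at its intersections with the stable manifolds of the critical points. In $M\times\R^{N_{+}}\times\R^{N_{-}}$ with $N_{-}>0$ this fails: the stable manifold of a critical point of (shifted) index $k$ has codimension $k+N_{-}$, so a generic loop misses \emph{all} stable manifolds of positive-index points and almost every point of the loop simply flows off to $-\infty$ in the $\R^{N_{-}}$ direction --- the loop disappears rather than decomposing. The paper's replacement is to work not with the flow applied to $\gamma$ but with the $2$-dimensional moduli space $\cM(\gamma,M)$ of augmentation-like trajectories starting over $\gamma$ and ending on $M\times\R^{N_{+}}$, and to extract the Morse loop $\Theta(\gamma)$ by the crocodile walk along the boundary component containing the zero-length trajectory at $\star$ (Proposition \ref{prop:croco homotopy}). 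Your argument contains no substitute for this step.

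The injectivity argument has the same gap in a more serious form. Stratifying a generic disk $u:D^{2}\to M$ by preimages of stable manifolds again produces nothing in the stable setting, and your expectation that ``each $2$-cell contributes one of the generating relations'' implicitly ties the relations to index $2$ critical points; the stabilized-circle example of section \ref{sec:stabS1} exhibits an extra generator with \emph{no} index $2$ point available to kill it, so relations of that shape cannot suffice. The paper's relations are instead of two kinds: the identification $\gamma\sim\Theta(\gamma)$ (absent from your proposal, but needed to even begin the injectivity argument), and relations carried by \emph{index $n$} critical points via relation patches in the bouncing moduli spaces $\bM(a,M)$. Injectivity is then proved by running a generic homotopy $\gamma_{s}$ to the constant loop, observing that the downward and upward crocodile walks fail transversality at disjoint finite sets of parameters, and using the fiber-product-of-intervals Lemma \ref{lem:fiberproductofintervals} to synchronize the two walks into a line of matching relation patches whenever the downward walk degenerates (Lemma \ref{lem:homotopy reg up}). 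Acknowledging that bouncing trajectories are ``a delicate point of bookkeeping'' does not supply this mechanism; it is the core of the proof and is missing from your outline.
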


\begin{remark}
  More precisely, the connected components of $1$ dimensional moduli
  spaces of augmentation like trajectories can be seen as ``steps'', and
  loops are sequences of consecutive steps. Although the group of Morse
  loops is not the free group generated by the steps because of the
  consecutivity condition, it is not hard to see \cite{FloerPi1} that
  there are enough of them to generate the fundamental group.

  In particular, the number of components of the moduli spaces of
  augmentation like trajectories define a notion of multiplicity
  $\nu(y)$ for the index $1$ critical points of $f$, and $\nu(\star)$ for
  the base point, such that%
  \begin{equation}
    \label{eq:CountWithMultiplicity}%
    \nu(\star)+\sum_{|y|=1}\nu(y)\geq \mu(\pi_{1}(M,\star)),  
  \end{equation}
  where $\mu(\pi_{1}(M,\star))$ is the minimal number of generators of
  the fundamental group. %(see \cite{FloerPi1}). Déjà cité au-dessus ?

  Notice that in the genuine (non-stable) Morse case, we always have
  $\nu(y)=1$ and $\nu(\star)=0$, so that the above inequality is a
  generalisation to the stable case of the well known inequality
  $$
  \sharp \Crit_{1}(f)\geq \mu(\pi_{1}(M,\star))
  $$
  in the genuine Morse setting. The equality
  \eqref{eq:CountWithMultiplicity} comes as a counterpart to the
  result of M.Damian: the function $f$ may have less critical points than
  the minimal number of generators for the fundamental group, but this
  has to be balanced by non trivial multiplicities, which means more
  complicated moduli spaces than in the Morse case.

%  In other words, since components of the moduli
%  spaces of augmentation like trajectories can be thought of as a
%  stable notion of unstable manifolds, this inequality can be
%  interpreted in the following way: \emph{ if there are not enough
%    critical points, some of them (or the base point itself) have to take
%    on several unstable manifolds by themselves}. 
\end{remark}

\begin{remark}
  In contrast to (non stable) Morse theory, the present description
  of the relations does not lead to numerical constraints. Indeed, the
  family of generators for the relations mentioned in the statement may
  be infinite, and although we a posteriori know that only finitely of
  them are required, it is not clear from our description how to proceed
  to such a selection.
\end{remark}

\begin{remark}
  One very powerful feature of Morse and Floer homology, is its
  filtration according to the action. Unfortunately, the present
  description of the relation relies on ``bouncing'' trajectories, which
  ruins the action control. However, the example discussed in
  \ref{sec:stabS1} shows that some some kind of generalized trajectory,
  that allow to somehow go up the flow, has to be involved anyway.
\end{remark}

Finally, we prove that this construction is functorial, in the following
sense:
\begin{theorem}
  Consider two based manifolds $(M,\star)$ and $(M',\star')$, and a
  continuous map $\phi:(M,\star)\to (M',\star')$. To a generic choice of
  stable Morse data $\Xi$ and $\Xi'$ on $M$ and $M'$ and of a smooth
  perturbation $\tilde{\phi}$ of $\phi$, is associated a group morphism
  $\pi_{1}(M,\star;\Xi)\xto{\phi_{*}}\pi_{1}(M',\star';\Xi')$, defined
  out ``hybrid'' moduli spaces and independent of the perturbation $\tilde{\phi}$, with
  the following properties:
  \begin{enumerate}
  \item
    if $\phi$ and $\psi$ are homotopic as based maps, then
    $\phi_{*}=\psi_{*}$.
  \item
    $\psi_{*}\circ\phi_{*}=(\psi\circ\phi)_{*}$,
  \item
    $\Id_{*}=\Id$,
  \end{enumerate}
\end{theorem}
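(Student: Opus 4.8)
The plan is to define $\phi_{*}$ through ``hybrid'' moduli spaces, and then to deduce \emph{all} the stated properties from a single geometric fact: after applying the evaluation maps, $\phi_{*}$ agrees with the topological map induced by $\phi$ on fundamental groups. Since functoriality, homotopy invariance and normalization are automatic on the topological side, they transport back along the isomorphisms of Theorem~\ref{thm:Main}. First I would set up the hybrid moduli spaces attached to a smooth perturbation $\tilde{\phi}$ of $\phi$: a hybrid trajectory follows the pseudo-gradient flow of $f$ on $M\times\R^{N_{+}}\times\R^{N_{-}}$ up to a point $q$, jumps to $\tilde{\phi}(q)$ (after the natural stabilizing inclusion), and then follows the flow of $f'$. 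As in section~\ref{sec:moduli spaces} one forms the hybrid analogues of the augmentation-like and bouncing moduli spaces; for generic $\tilde{\phi}$ these are cut out transversally, have the expected dimensions, and their low-dimensional strata are compact up to the usual breaking (flow breaking in $M$, flow breaking in $M'$, and hybrid breaking at the transition). The $0$- and $1$-dimensional augmentation-like hybrid components send each step of $\cL$ to a loop in $\cL'$, respecting the consecutivity condition since the outgoing end of one hybrid trajectory matches the incoming end of the next, hence define a map $\phi_{*}\colon\cL\to\cL'$.

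Next I would show that $\phi_{*}$ descends to the quotient, i.e. $\phi_{*}(\cR)\subseteq\cR'$. For each preferred Morse relation one constructs a $1$-dimensional hybrid moduli space whose boundary reads, on one side, the image under $\phi_{*}$ of that relation, and on the other side a product of elements of $\cR'$; counting boundary points yields the inclusion, and therefore a group morphism $\cL/\cR\to\cL'/\cR'$.

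The key step, and the main obstacle, is the compatibility with the evaluation maps, namely the commutativity of
$$
\begin{tikzcd}
\cL/\cR \ar[r,"\phi_{*}"] \ar[d,"\ev_{*}"',"\sim"] & \cL'/\cR' \ar[d,"\sim","\ev'_{*}"'] \\
\pi_{1}(M,\star) \ar[r,"\phi_{*}^{\mathrm{top}}"'] & \pi_{1}(M',\star').
\end{tikzcd}
$$
Geometrically one must check that the evaluation of a hybrid trajectory is homotopic, rel base point, to the $\phi$-image of the evaluation of its source Morse loop. The homotopy is produced by the $1$-parameter families inside the hybrid moduli spaces: sliding the transition point $q$ along the source trajectory interpolates between the pure source evaluation pushed through $\tilde{\phi}$ and the pure target evaluation, while the chosen homotopy $\tilde{\phi}\simeq\phi$ replaces $\tilde{\phi}$ by $\phi$ up to homotopy. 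The delicate points are to carry these homotopies through the base point coherently, and to make them compatible with the concatenation of steps, so that the identity holds at the level of \emph{based} homotopy classes of loops rather than merely free homotopy.

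Finally, once the square commutes, the proof concludes formally. Since $\ev_{*}$ and $\ev'_{*}$ are isomorphisms by Theorem~\ref{thm:Main}, we obtain $\phi_{*}=(\ev'_{*})^{-1}\circ\phi_{*}^{\mathrm{top}}\circ\ev_{*}$; the right-hand side does not involve $\tilde{\phi}$, which gives independence of the perturbation. Properties (1), (2) and (3) are then inherited from the standard behaviour of the topological induced maps: based-homotopic $\phi,\psi$ give $\phi_{*}^{\mathrm{top}}=\psi_{*}^{\mathrm{top}}$, hence (1); $(\psi\circ\phi)_{*}^{\mathrm{top}}=\psi_{*}^{\mathrm{top}}\circ\phi_{*}^{\mathrm{top}}$ gives (2); and $(\Id)_{*}^{\mathrm{top}}=\Id$ gives (3). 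Alternatively, (2) admits a direct proof by a gluing argument on a composed hybrid moduli space through $M$, $M'$, $M''$, interpolating between $\psi_{*}\circ\phi_{*}$ and $(\psi\circ\phi)_{*}$ as the intermediate flow length varies; this route is more involved, which is why the evaluation argument is preferable.
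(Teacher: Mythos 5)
Your proposal is correct and follows essentially the same route as the paper: define $\phi_{*}$ as the crocodile walk in the target applied to $\tilde{\phi}\circ\ev_{+}$ of a Morse loop (encoded by hybrid moduli spaces), establish the commutative square with the evaluation isomorphisms of Theorem~\ref{thm:Main} using the fact that crocodile walks preserve homotopy classes (Proposition~\ref{prop:croco homotopy}) together with $\tilde{\phi}\simeq\phi$, and then read off independence of $\tilde{\phi}$ and properties (1)--(3) from the corresponding facts for the topological induced map. The only divergence is cosmetic: you propose a direct boundary-counting argument for $\phi_{*}(\cR)\subseteq\cR'$ and a gluing proof of (2), both of which the paper also mentions as possible (adapting the upward/downward crocodile walk strategy) but deliberately bypasses via the formal identity $\phi_{*}=(\ev'_{*})^{-1}\circ\phi^{(top)}_{*}\circ\ev_{*}$, exactly as in your concluding paragraph.
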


\bigskip

The organisation of the paper is as follows: in the first part, we
recall basic definition of stable Morse setting, and discuss a few
situations where differences in the dynamical behaviour of the gradient
flow show up, that are particularly relevant to the construction of the
fundamental group. The second part is a review of all the moduli spaces
of trajectories involved in the construction. The generators of the group
are defined in the third part, as well as the ``crocodile walk'', which
is the main technical ingredient of the construction. The fourth part is
dedicated to the definition of the relations, and to the proof of theorem
\ref{thm:Main}. Finally, the functoriality properties are discussed in
the last part.

\section{Settings}

\subsection{Stable Morse setting}\label{sec:MorseVersusStableMorse}
Let $M$ be a closed $n$ dimensional manifold. A stabilized Morse function
on $M$ is a Morse function on $M\times\R^{N+}\times\R^{N-}$, that is
quadratic at infinity, i.e. such that, out of a compact set,
$f(p,x_{+},x_{-}) = q(x_{+},x_{-}) = \Vert x_{+} \Vert^{2}-\Vert x_{-}
\Vert^{2}$ where $\Vert \cdot \Vert$ stands for the standard Euclidean
metric on $\R^{N_{\pm}}$.

A vector field $X$ on $M\times\R^{N+}\times\R^{N-}$ is a pseudo gradient
for a stable Morse function $f:M\times\R^{N+}\times\R^{N-}\to\R$ if
\begin{itemize}
\item
  $df(X)> 0$ away from the critical points of $f$,
\item
  each critical point is the center of a Morse chart in which $X$ is the
  gradient of the standard Morse model for the standard metric,
\item
  outside of a compact set, $X$ is the gradient of $q$ for a product metric
  that is standard in the fiber $\R^{N+}\times\R^{N-}$.
\end{itemize}

\begin{definition}
A choice $\Xi=(f,X)$ of a stable Morse function and a pseudo gradient
for this function is called a Morse auxiliary data.  
\end{definition}

Let $\cF$ be the space of all possible choices of Morse auxiliary data
(for a fixed $N_{+}$ and $N_{-}$). Suitable regularity conditions turn
this space into a Banach manifold, and a condition will be called
generic if it holds for a countable intersection of open dense subsets in
$\cF$.

We also fix a base point $\star$ in $M$, and similarly, a condition on a
couple $(\Xi,\star)$ will be called generic if it holds for a countable
intersection of open dense sets in $\cF\times M$.

\subsection{Morse versus stable Morse settings}

As already mentioned in the introduction, despite the homotopical
equivalence between  $M$ and $M\times\R^{N_{+}}\times\R^{N_{-}}$, the
differences of dynamical behaviour between the stable and non
stable settings ruins any hope of a direct extension of the Morse
description of the fundamental group to the stable Morse setting.

Damian's result \cite{Mihai} about stable Morse functions having
strictly less critical points than the minimal number of generators of
the fundamental group is one instance of this issue, but the dynamical
difference between the two situations is easy to spot in a far less
precise and subtle way: for instance, in the stable setting, almost
all the trajectories run from $+\infty$ to $-\infty$. On the other hand,
a very general strategy to turn a topological object, like a loop in our
case, into a Morse theoretic object, is to push it down by the flow, and
see what it becomes. In the stable setting, this strategy fails
completely, as it simply makes everything disappear at infinity...

\medskip

Notice that this difference of dynamical behaviour is not such a problem
regarding the construction of the Homology, as what is relevant there are
the intersections of stable and unstable manifolds, rather than stable or
unstable manifolds themselves.

Regarding the fundamental group however, the situation is very different.
In the genuine Morse case, the main characters of the construction are
the whole unstable manifolds of the index $1$ critical points in this case:
they define paths in the manifold, and any loop in the manifold is push
down to a sequence of consecutive such paths by the flow.

The key point in our construction is the observation that the stable
counterpart to the usual unstable manifolds
are the moduli spaces $\cM(x,M)$ of ``augmentation like trajectories'',
i.e. of trajectories that start at a critical point $x$ and hit
$M\times\R^{N_{+}}$. They are indeed a stable version of the ``Latour
cells'' (see \cite{Latour}), with the crucial difference that they are
not cells anymore, i.e. that we have no control on their topology at all.

The above difference of behaviour of the flow in the genuine and stable
settings affects the generators. We continue this section with two
examples of situations enlightening differences that are relevant for
the relations. They make no contribution to the final construction and may
be skipped by the reader, but they still illustrate that some aspects of
the construction that may seem artificially far from the original Morse
construction are in fact mandatory.

\paragraph{A stabilized circle with an extra generator}\label{sec:stabS1}

Consider the case where $M=\S^{1}$, and  pick a Morse function $f_{0}$ on
$\S^{1}$ having only one maximum $a$ and one minimum $b$ as critical
points. There are two flow lines from $a$ to $b$: they form a loop that
is the generator of $\pi_{1}(\S^{1})$.

We can now stabilize $f_{0}$ by considering $f:\S^{1}\times\R^{2}\to\R$
defined as
%\footnote{This functions has critical points on
%  $\SS^{1}\times\{0\}$, which definitly not generic~!}
$f(\theta,x_{+},x_{-})=f_{0}(\theta)+x_{+}^{2}-x_{-}^{2}$.

For the standard product metric, nothing changed: we still have exactly
two flow lines from the ``index $1$'' (in a shifted sense) critical point
$(a,0,0)$ to the ``index $0$'' critical point $(b,0,0)$.

We can however modify the metric so that the stable manifold (in the
usual sense) of $b$ now makes a fold as in figure \ref{fig:stabilizedS1}.

We end up with the same critical points, but 4 flow lines,  which give
rise to two generators for the fundamental group. The striking point
here, is that there are no index $2$ critical point, so no obvious way to
kill this extra generator.

This example illustrates that the relations cannot be associated to index
$2$ critical points only, nor can they use the critical points in the
usual way only. The construction proposed in this paper involves
``bouncing'' trajectories: this example illustrates that this is not an
artefact, and that some sort of generalized trajectory has to be involved. 

\begin{figure}
  \centering
  \begin{tikzpicture}[x={(-.5cm,-.3cm)},y={(1cm,-.1cm)},z={(0cm,1cm)}]
    \coordinate(a) at (0,2.25,0);
    \coordinate(b) at (0,-3,0);
    \coordinate(t1)  at (0,0, .5);
    \coordinate(t0)  at (0,0,  0);
    \coordinate(t-1) at (0,0,-.5);
    \coordinate(t-0) at (0,-5.5, 0);
    \coordinate(tt1)  at (0,0, .5);
    \coordinate(tt0)  at (0,0,  0);
    \coordinate(tt-1) at (0,0,-.5);
    \coordinate(tt-0) at (0,5,  0);
    %\draw[thin](0,0,0)--(3,0,0)(0,0,0)--(0,0,3);
    %\draw[thick](0,-5,0)--(0,-4,0)(0,4,0)--(0,5,0);
    \draw[thin](3,0,0)--(3,-4,0)--(-3,-4,0)--(-3,0, 0)
          ..controls +(1,0,0.2) and +(-1,0,0)..
    (t1)  ..controls +(2,0,0) and +(2,0,0)..
    coordinate[pos=.63](fold)coordinate[pos=.12](bfold)
    (t0)  ..controls +(-2,0,0) and +(-2,0,0)..coordinate[pos=.32](foldd)
    (t-1) ..controls +(1,0,0) and +(-1,0,-.2)..
    cycle;
    \draw(fold) to[relative,out=-5, in=180+5](0.5,-1.75,0);
    \draw(foldd)to[relative,out= 0, in=180](bfold);

    \begin{scope}[very thin,>={Stealth[length=2.5pt]}]
      \foreach\a in{0,45,...,359}{
        \draw[-<](b)--+($cos(\a)*(0,.5,0)+sin(\a)*(.5,0,0)$);
        \draw(b)--+($cos(\a)*(0,.6,0)+sin(\a)*(.6,0,0)$);
      }
      \draw[->](b)--+(0,0, 1);
      \draw(b)--+(0,0, 1.1);
      \draw[->](b)--+(0,0,-.75);
      \draw(b)--+(0,0,-.85);
    \end{scope}

    \draw[red,thick,trajectory](t1)  ..controls +(0,-1,0) and +(-1,.25,0).. (b);
    \draw[blue,thick,trajectory](t-1) ..controls +(0,-1,0) and +( 1,.25,0).. (b);
    \draw[red,thick,trajectory](t0)  -- (b);
    \draw[noEnd-,blue,thick,trajectory](t-0) -- (b);

    \draw[fill=white, opacity=.6,postaction={opacity=1,draw}]
    (0,0,1.5)--(0,4,1.5)--(0,4,-1.5)--(0,0,-1.5)--cycle;

    \begin{scope}[very thin,>={Stealth[length=2.5pt]}]
      \foreach\a in{0,45,...,359}{
        \draw[->](a)--+($cos(\a)*(0,.5,0)+sin(\a)*(0,0,.5)$);
        \draw(a)--+($cos(\a)*(0,.6,0)+sin(\a)*(0,0,.6)$);
      }
      \draw[-<](a)--+( 2,0,0);
      \draw(a)--+( 2.2,0,0);
      \draw[-<](a)--+(-2,0,0);
      \draw(a)--+(-2.2,0,0);
    \end{scope}

    \draw[red,thick,trajectory](a) ..controls +( 0,-.5, .25) and +(0, 1,0).. (tt1) ;
    \draw[blue,thick,trajectory](a) ..controls +( 0,-.5,-.25) and +(0, 1,0).. (tt-1);
    \draw[red,thick,trajectory](a) -- (tt0) ;
    \draw[-noEnd,blue,thick,trajectory](a) -- (tt-0);

    \draw[noEnd-noEnd,thick](0,-5,-2)--+(0,10,0);
    \draw[very thin](0,-4,-2)--+(0,0,4)node[below right]{$\R^{-}$};
    \draw[very thin](0,-4,-2)+(-2,0,0)--+(2,0,0)node[below right]{$\R^{+}$};
    \draw(a) node{\tiny$\bullet$} node[above right]{\tikzoutline{$a$}};
    \draw(b) node{\tiny$\bullet$} node[above left]{\tikzoutline{$b$}};
    \draw(0,5,-2) node[below right]{$\S^{1}$};
    % \draw[blue](0,0,0)--(0,0,1);
    \draw(b) node{\tiny$\bullet$} node[above left]{\tikzoutline{$b$}};
    \draw(a)+(0,0,-2) node{\tiny$\bullet$} node[below]{$\pi(a)$};
    \draw(b)+(0,0,-2) node{\tiny$\bullet$} node[below]{$\pi(b)$};

    \begin{scope}[x={(1cm,0cm)},y={(0cm,1cm)},shift={(0,4)}]
      \coordinate(a)at(0, 1);
      \coordinate(b)at(0,-1);
      \draw[->-,thick,blue](a)arc[start angle=90, end angle=-90,radius=1];
      \draw[->-,thick,blue](a)arc[start angle=90, end angle=270,radius=1];
      \draw(a)node{\tiny$\bullet$};
      \draw(b)node{\tiny$\bullet$};
      \draw[->-,thick,red](a)--
      ($(a)+(8pt,0pt)$)arc[start angle=90, end angle=-90,radius=1]--(b);
      \draw[->-,thick,red](a)--
      ($(a)+(16pt,0pt)$)arc[start angle=90, end angle=-90,radius=1]--(b);
      \draw(a)node{\tiny$\bullet$}node[above]{$\pi(a)$};
      \draw(b)node{\tiny$\bullet$}node[below]{$\pi(b)$};
    \end{scope}
  \end{tikzpicture}
  \caption{A stabilized circle with an extra generator,
    but no index $2$ point to kill it.
  }
  \label{fig:stabilizedS1}
\end{figure}
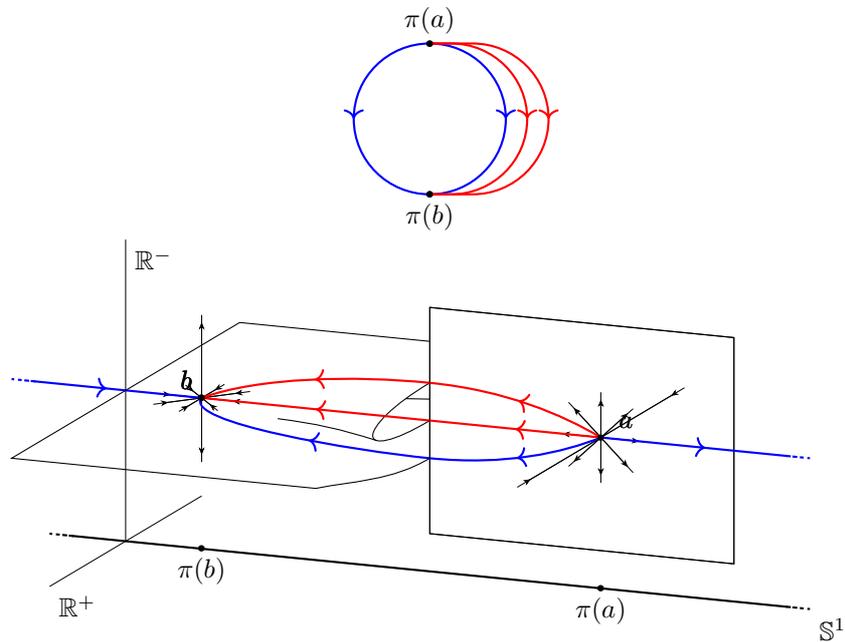

\paragraph{Birth death bifurcations}

Another instructive situation to observe is that of birth/death
bifurcations.

Recall that a generic pair of a Morse function and a Riemannian metric
gives a presentation of the fundamental group. For instance, in the case
where the Morse function has a single minimum, the presentation has one
generator for each index $1$ critical point, and one relation for each
index 2 critical point.

Consider a situation where an index $2$ critical point $x$ and an index
$3$ critical point $y$ are about to cancel each other. This means that
one relation is about to disappear from the presentation.

Of course, since $x$ and $y$ are in a cancelling situation, they are
related by exactly one trajectory, and $x$ and its associated $2$-cell
$W_{x}$ does appear exactly once on the boundary of the $3$-cell
associated to $y$. On the other hand, the boundary of this $3$-cell, in
the Morse setting, is a $2$-sphere, so that the complement of $W_{x}$ in
this $2$ sphere is a disc, and this disc splits as a collection of other
2 cells associated to all the other index $2$ critical points linked to
$y$: the algebraic counterpart of this is that the relation that is
about to disappear from the presentation is in fact a word in other
relations that do persist. This explains the invariance of the group
through this bifurcation.

\medskip

In the stable setting however, genuine unstable manifolds (or more
precisely Latour cells) are useless (as almost all trajectories go
straight to $-\infty$) and should be replaced by moduli spaces of
``augmentation like'' trajectories. Unfortunately, we have no control at
all on the topology these moduli spaces, and the above argument fails
completely.

One way to get rid of this issue is to ``relocate'' the relations at
index $n$ critical points. In the above situation, $x$ itself is
connected by a flow trajectory to an higher index $n$ critical point $z$.
Picking such a trajectory and performing a gluing at $x$ of this
trajectory with all the trajectories in the unstable manifold (more
precisely in the Latour cell) of $x$, the $2$-disc associated to $x$,
becomes a cone rooted at $z$ instead of at $x$ (see figure
\ref{fig:moveroot}), and this cone survives the bifurcation.

%One way to get rid of any requirement on the topology of the unstable
%manifold, is to observe that $x$ is not only related to $y$, but also to
%higher critical points, and at least to one local maximum $z$. The
%$2$-disc associated to $x$, endowed with a trajectory from $z$ to its
%center $x$, can then be seen, after gluing the broken trajectories at
%$x$, as a cone rooted at $z$ instead of $x$ (see figure
%\ref{fig:moveroot}). Since it does not involve neither $x$ nor $y$
%anymore, this cone obviously survives the bifurcation.

Of course this strategy only shifts the issue to birth/death of index $n$
points, but these bifurcations are better controlled (because they are
related to the fundamental class, which has to ``survive'') and can be
sorted out without any requirement on the topology (mainly because the
stable manifolds now have small enough dimension to have
controlled topology).

Our definition of the relations will indeed be associated to index $n$
critical points, which is quiet different from the usual construction,
but this example of birth death bifurcation suggests that some non
trivial modification with respect to the usual construction has to be
done anyway.

\begin{figure}
  \centering
  \begin{tikzpicture}
    \begin{scope}[shift={(-3,0)}]
      \begin{scope}[x={(-.5cm,-.3cm)},y={(1cm,-.1cm)},z={(0cm,1cm)}]
        % \draw(0,0)--(0:1)(0,0)--(90:1);
        \coordinate(z)at(0,0,.5);
        \coordinate(a)at(0,0,2);
        \draw(-60:1)
        \foreach \a in{-30,0,...,120}{
          ..controls +(0,0,.01) and +(\a-90-15:.2).. ($(\a-15:1)+(0,0,.1)$)
          ..controls +(\a+90-15:.2) and +(0,0,.01).. (\a:1)
        };
        \draw(z)..controls +(-60:1) and +(0,0,.1).. (-60:1);
        \draw(z)..controls +(120:1) and +(0,0,.1).. (120:1);
        \foreach \a in{-30,0,...,120}{
          \draw[postaction={decorate,decoration={markings,
              mark=at position .75 with {\arrow{>}}}}]
          (z)..controls +(\a-15:1) and +(0,0,.01).. ($(\a-15:1)+(0,0,.1)$);
        }
        \draw[->-](a)--(z);
        \draw(z)node{\tiny$\bullet$}node[above left]{$x$};
        \draw(a)node{\tiny$\bullet$}node[above]{$z$};     
      \end{scope}
    \end{scope}
    
    \begin{scope}[shift={(0,0)}]
      \begin{scope}[x={(-.5cm,-.3cm)},y={(1cm,-.1cm)},z={(0cm,1cm)}]
        %\draw(0,0)--(0:1)(0,0)--(90:1);
        \coordinate(z)at(0,0,.5);
        \coordinate(a)at(0,0,2);
        \foreach \a in{-30,0,...,120}{
          \coordinate(x\a)at(\a:1);
          \coordinate(y\a)at($(\a-15:1)+(0,0,.1)$);
        }
        \draw(-60:1)
        \foreach \a in{-30,0,...,120}{
          ..controls +(0,0,.01) and +(\a-90-15:.2).. (y\a)
          ..controls +(\a+90-15:.2) and +(0,0,.01).. (x\a)
        };
        \foreach \a in{-30,0,...,120}{
          \draw[postaction={decorate,decoration={markings,
              mark=at position .87 with {\arrow{>}}}}]
          (a)
          ..controls +($(\a:.1)+(0,0,-.3)$) and +($-1*(\a:.3)+(0,0,.2)$)..
          ($(y\a)!.65!(z)+(0,0,.3)$)
          ..controls +($(\a:.2)+(0,0,-.2)$) and +(0,0,.4)..
          (y\a);
        }
        \draw(a)
        ..controls +($(-60:.1)+(0,0,-.3)$) and +($-1*(-60:.3)+(0,0,.2)$)..
        ($(-60:.4)+(0,0,.7)$)
        ..controls +($(-60:.2)+(0,0,-.2)$) and +(0,0,.5)..
        (-60:1);
        \draw(a)
        ..controls +($(120:.1)+(0,0,-.3)$) and +($-1*(120:.3)+(0,0,.2)$)..
        ($(120:.4)+(0,0,.7)$)
        ..controls +($(120:.2)+(0,0,-.2)$) and +(0,0,.6)..
        (120:1);
        \draw(a)node{\tiny$\bullet$}node[above]{$z$};     
      \end{scope}
    \end{scope}
  \end{tikzpicture}
  \caption{Relocating a Morse relation from an index $2$ critical point $x$
    to an index $n$ critical point $z$.}
  \label{fig:moveroot}
\end{figure}
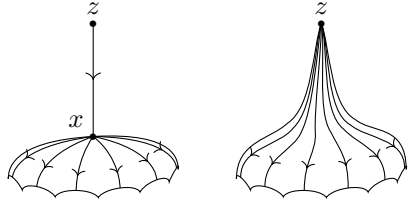

%%%%%%%%%%%%%%%
%\SkipToHere
%%%%%%%%%%%%%%

%%%%%%%%%%%%%%%%%%%%%%%% 
%\SkipFromHere
%%%%%%%%%%%%%%%%%%%%%%%%

%\section{Settings:}
%\begin{itemize}
%\item
%  $M$ is a $n$-dimensional closed manifold,
%\item
%  $f:\bar{M} = M\times\R^{N_{+}}\times\R^{N_{-}}$ is a Morse function
%  that is a quadratic form $q$ at infinity, with signature
%  $(N_{+},N_{-})$.
%\item
%  $g$ is a Riemannian metric on $\bar{M} =  
%  M\times\R^{N_{+}}\times\R^{N_{-}}$ that is standard at infinity.
%\item
%  $X$ is a pseudo gradient for $f$, that is standard in Morse charts near
%  each critical point.
%\item
%  Given a critical point $x$ of $f$, $|x|$ stands for its Morse index
%  shifted down by $N_{-}$:
%  $$
%  |x|=\mu_{\text{Morse}}(x)-N_{-}.
%  $$
%\end{itemize}

\section{Moduli spaces review}\label{sec:moduli spaces}

Before entering the description of the fundamental group, we first review
all the moduli spaces that will be used. On top of the usual moduli
spaces of flow lines, this contains the space of ``augmentation like''
trajectories and ``bouncing'' or ``hybrid'' trajectories.

\subsection{Flow lines, and augmentation like trajectories}
\label{sec:augmentation like}

To reduce notation in the forthcoming definitions, consider the space
$\cF_{I}=\{u:I\to M, u'=-X(u)\}$ of flow trajectories of $-X$ defined on
the interval $I$.

The most basic moduli spaces we are interested in are the following:
\begin{itemize}
\item
  $\cM(x_{-},x_{+})=\left\{u\in\cF_{(-\infty,+\infty)},\ u(\pm\infty)=x_{\pm}\right\}/\R$
\item
  $\cM(x_{-},M)=\left\{u\in\cF_{(-\infty,0]},\ u(-\infty)=x_{-}, u(0)\in
    M\times\R^{N_{+}}\right\}$
\item
  $\cM(M,x_{+})=\left\{u\in\cF_{[0,+\infty)},\ u(0)\in M\times\R^{N_{-}},
    u(+\infty)=x_{-}\right\}$
\item
  $\cM(M,M)=\left\{(T,u),u\in\cF_{[0,T]},\ u(0)\in M\times\R^{N_{-}},
    u(T)\in M\times\R^{N_{-}}\right\}$
\end{itemize}

Trajectories in $\cM(x_{-},M)$ will be called augmentation like
trajectories (and trajectories in $\cM(M,x_{+})$, co-augmentation like
trajectories).

\begin{remark}
  As alternative description, augmentation like trajectories can be seen
  as the ``finite energy'' solutions $u$ of the following ODE:
  $$
  u'(t) =- \chi(t) (X(u(t))- (1-\chi(t))X_{q}
  $$
  where $\chi$ is a smooth function such that $\chi(t)=1$ for $t<0$ and
  $\chi(t)=0$ for $t\geq 0$, $X_{q}$ is the gradient of the quadratic form,
  and the energy of a solution is the $L_{2}$ norm of its derivative
  $\int_{\R}\Vert u'(t) \Vert^{2}dt$.

  From this point of view, they are a strict analog of ``Floer
  cappings'' (i.e. tubes that satisfy the usual Floer equation on one half,
  and are holomorphic on the other, and have finite energy) that are used to define the
  Piunikhin-Salamon-Schwarz morphism for instance, or geometric
  augmentation on some Floer complexes. In this analogy, the quadratic form
  plays the role of the symplectic area, and $(f-q)$ the role of the
  Hamiltonian (see figure \ref{fig:MorseVsFloerAugmentations}).
\end{remark}

These moduli spaces come with maps $\ev_{\pm}$ to $M$ given by the
composition of the evaluation at the beginning or end of the time
interval, and projection on $M$.

Given a point $\star\in M$, we also define
\begin{itemize}
\item
  $\cM(\star,x_{+})=\ev_{-}^{-1}(\star)\subset\cM(M,x_{+})=\{u\in\cM(M,x_{+}),
  u(0)\in\{\star\}\times\R^{N_{-}}\}$
\item
  $\cM(x_{-},\star)=\ev_{+}^{-1}(\star)\subset\cM(x_{-},M)$
\item
  $\cM(\star,M)=\ev_{-}^{-1}(\star)\subset\cM(M,M)$
\item
  $\cM(M,\star)=\ev_{+}^{-1}(\star)\subset\cM(M,M)$
\end{itemize}

It follows from standard transversality arguments that for a generic choice of
auxiliary data $\Xi$ and point $\star$, these moduli spaces are smooth
manifolds.

Notice that although $M\times\R^{N_{+}}\times\R^{N_{-}}$ is not compact,
the behaviour of $f$ at infinity ensures that the flow is complete.
Moreover, our choice of constraints (hitting $M\times\R^{N_{+}}$ at the
positive end of the trajectory or $M\times\R^{N_{-}}$ at the negative
end) ensures that these moduli spaces are compact, up to broken
configurations and zero length trajectories. We will still denote by
$\cM(\cdot,\cdot)$ the compactification of the above moduli spaces. The
expected dimensions for these moduli spaces are the following:
\begin{align*}
  \dim\cM(x_{-},x_{+})&=|x_{-}|-|x_{+}|-1,
  &
    \dim\cM(M,M)&=n+1,
  \\
  \dim\cM(x_{-},M)&=|x_{-}|,
  &
  \dim\cM(M,x_{+})&=n-|x_{+}|,
  \\
  \dim\cM(x_{-},\star)&=|x_{-}|-n,
  &
  \dim\cM(\star,x_{+})&=-|x_{+}|,
  \\
  \dim\cM(M,\star)&=1,
  &
  \dim\cM(\star,M)&=1.
\end{align*}
Moreover, the boundary of these spaces are given by:
\begin{itemize}
\item
  $\partial\cM(x_{-},x_{+})=\bigcup_{y\in\Crit(f)}\cM(x_{-},y)\times\cM(y,x_{+})$,
\item
  $\partial\cM(x_{-},M)=\bigcup_{y\in\Crit(f)}\cM(x_{-},y)\times\cM(y,M)$,
\item
  $\partial\cM(M,x_{+})=\bigcup_{y\in\Crit(f)}\cM(M,y)\times\cM(y,x_{+})$,
%\item
%  $\partial\cM(\star,x_{+})=\bigcup_{y\in\Crit(f)}\cM(\star,y)\times\cM(y,x_{+})$,
%\item
%  $\partial\cM(x_{-},\star)=\bigcup_{y\in\Crit(f)}\cM(x_{-},y)\times\cM(y,\star)$,
%\item
%  $\partial\cM(\star,M)=\bigcup_{y\in\Crit(f)}\cM(\star,y)\times\cM(y,M)$,
%\item
%  $\partial\cM(M,\star)=\bigcup_{y\in\Crit(f)}\cM(M,y)\times\cM(y,\star)$,
\end{itemize}

\begin{figure}
  \centering
  \begin{tikzpicture}
    \begin{scope}[x={(-.5cm,-.3cm)},y={(1cm,-.1cm)},z={(0cm,1cm)}]
      \coordinate(y) at (1,1,1.5);
      \coordinate(p) at (1.25,2,0);
      \coordinate(m) at (0   ,2,0);
      
      \draw[thin](0,0,0)--(2,0,0)--(2,3,0)--(0,3,0)--cycle;
      \draw[thin](0,0,0)--(0,0,2)--(0,3,2)--(0,3,0)--cycle;
      \draw(0,3,0)node[right]{$M$};
      \draw(2,3,0)node[below right]{$\R^{N_{+}}$};
      \draw(0,3,2)node[right]{$\R^{N_{-}}$};
      \draw[blue,thick, trajectory](y) ..controls+(0,.75,0) and +(0,-.25,.5)..
      node[black,pos=.5,above right]{$-\nabla f$}(p);
      \draw(y)node{\tiny$\bullet$}node[above]{$y$};
      \draw[blue,thick,dashed](p)--(m);
    \end{scope}

    \begin{scope}[shift={(6,2)}]
      \begin{scope}%[x={(-.5cm,-.3cm)},y={(1cm,-.1cm)},z={(0cm,1cm)}]
        \draw(0,0) arc(-180:180:1 and .25)coordinate[pos=.15](y);
        \draw[blue, thick](0,0)--(0,-1.5)(2,-1.5)--(2,0);
        \draw[blue, thick, dashed](0,-1.5)--(0,-2) arc(-180:0:1)(2,-2)--(2,-1.5);
        \draw[blue, thick, densely dotted](0,-1.5) arc(-180:0:1 and .25);
        \path (y) node {\tiny$\bullet$}node[above]{$y$};
        \path(1,-1)node{$H$};
        \path(1,-2.25)node{$0$};
      \end{scope}
    \end{scope}
  \end{tikzpicture}
  \caption{Augmentation like trajectories in the stable Morse setting,
    and their Floer analog. } 
  \label{fig:MorseVsFloerAugmentations}
\end{figure}
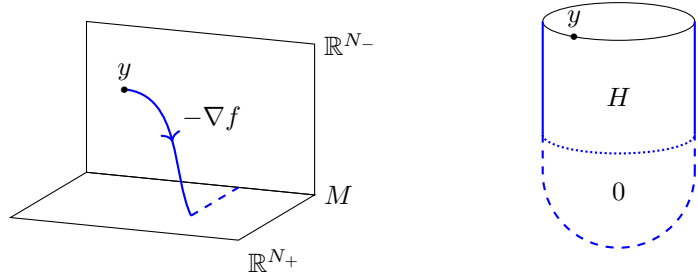

\subsection{Bouncing and hybrid trajectories}\label{def:bouncing trajectory}

We now describe ``bouncing'' trajectories. A bouncing trajectory from a
critical point $x_{-}$ to a critical point $x_{+}$ is an element of:
\begin{align*}
  \bM(x_{-},x_{+})
  &=\cM(x_{-},M)\fibertimes{\ev_{+}}{\ev_{-}}\cM(M,x_{+})\\
  &=\Big\{(u,v)\in\cM(x_{-},M)\times\cM(M,x_{-}),
  \ev_{+}(u)=\ev_{-}(v)\Big\}
\end{align*}

\begin{figure}[ht]
  \centering
  \begin{tikzpicture}[x={(-.5cm,-.3cm)},y={(1cm,-.1cm)},z={(0cm,1cm)}]
    \draw(0,0,0)--(0,3,0)--(0,3,0)--(2,3,0)--(2,3,0)--(2,0,0)--(2,0,0)--(0,0,0);
    \draw(0,0,0)--(0,0,2.5)--(0,3,2.5)--(0,3,0);

    \coordinate (y1) at (.5,1,1.5);
    \coordinate (p1) at (1,1.75,0);
%    \coordinate (y2) at (0,1,1.5);
    \coordinate (p2) at (1,2.5,.75);
    \coordinate (star) at (0,2.5,0);

    \draw[blue,thick,->-] (y1)node[blue]{\tiny$\bullet$}node[above]{$x_{-}$}
    to[out=-120,in= 90]coordinate[pos=.7](t)(p1);
    \draw[dashed](p1)--++(-1,0,0)--++(0,0,1.5)
    coordinate(y2){};
    \draw[thick,->-,red] (y2) to[out=-130,in= 100] (p2);
    \fill[blue](p1) circle (.8pt);
    \fill[red] (y2) circle (.8pt);
    \fill[red] (p2) circle (.8pt);

%    \draw[thin,blue,postaction={draw, thick,
%      decoration={curveto,post=moveto, post length=6mm},decorate}]
%    (y1)to[out=-120,in= 110](p1);
    
    \draw[red](p2)node{\tiny$\bullet$}node[right]{$x_{+}$};
    \draw(0,3,0)node[right]{$M$};
    \draw(0,3,2.5)node[right]{$\R^{N_{-}}$};
    \draw(2,3,0)node[below right]{$\R^{N_{+}}$};

    \begin{scope}[x={(1cm,0cm)},y={(0cm,1cm)},shift={(7,.7)},scale=.5]
      \draw[blue](-1, 3)ellipse(1 and .3);
      \draw[blue](-2, 3)to[out=-90,in=180](0,0)to[out=  0,in=-90](0, 3);
      \path[blue](-2,3)node{\tiny$\bullet$}node[left]{$x_{-}$};
      \begin{scope}[shift={(.25,0)}]
        \draw[red]( 2,-3)to[out= 90,in=  0](0,0)to[out=180,in= 90](0,-3);
        \draw[red]( 0,-3)arc(-180:0:1 and .3);
        \path[red](0,-3)node{\tiny$\bullet$}node[left]{$x_{+}$};
        \draw[red,dashed]( 0,-3)arc(180:0:1 and .3);
      \end{scope}
      \draw[fill](.125,0)circle[radius=.04];
    \end{scope}

  \end{tikzpicture}
  
  \caption{A bouncing trajectory in $\bM(x_{-},x_{+})$, and its Floer analog.
    Hybrid trajectories are bouncing trajectories using different Morse data
    on the different components, which is depicted by a change of color.}
  \label{fig:bouncing once}
\end{figure}
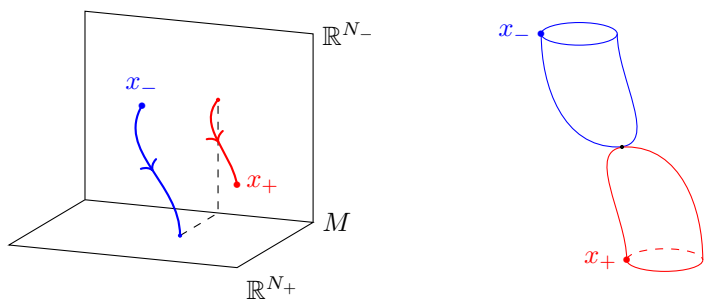

Notice that different auxiliary data can be used before and after the
bounce, in which case we speak of hybrid trajectories.

\begin{remark}
  In the (genuine) Morse setting, bouncing trajectories reduce to
  trajectories with a marked point.

  Hybrid trajectories however are trajectories that flow according to the
  first Morse data down to some arbitrary point, and then flow according
  to the second data. These configurations are typically used to define a
  morphism from the Morse complex associated to the first data, to the
  Morse complex associated to the second one, and derive functoriality
  properties (although an explicit implementation of this construction is
  hard to spot in the literature~; see also the celebrated
  Piunikhin-Salamon-Schwarz morphism \cite{PSS} in the Floer setting, or
  \cite{BDHO} for a discussion of functoriality properties of Morse
  Homology in the context of ``DG'' coefficients).
\end{remark}

Of course the same definition of bouncing trajectories can be given for
any kind of constraints at start or end. In particular, we let:
\begin{align*}
  \bM(x_{-},M)
  &=\cM(x_{-},M)\fibertimes{\ev_{+}}{\ev_{-}}\cM(M,M)\\
  \bM(M,x_{+})
  &=\cM(M,M)\fibertimes{\ev_{+}}{\ev_{-}}\cM(M,x_{+})\\
  \bM(M,M)
  &=\cM(M,M)\fibertimes{\ev_{+}}{\ev_{-}}\cM(M,M)\\
\end{align*}

Again, classical transversality arguments show that for a generic choice
of auxiliary data, these moduli spaces are smooth manifolds with corners.

\begin{definition}\label{def:regular data}
  A choice $(\Xi,\star)$ of a Morse data and a base point $\star$ will be
  said to be regular if all the moduli spaces
  $\cM(x_{-},x_{+}),\cM(x_{-},M),\cM(M,x_{+})$ and $\cM(M,M)$ are cutout
  transversely, and all possible pairs of evaluation maps
  $(\ev_{+},\ev_{-})$ defined on these moduli spaces 
  $$
  \begin{tikzcd}[row sep=1ex,column sep=8ex]
    \cM(x_{-},M)
    \arrow[to=M,start anchor={east},bend left=20]
    & &
    \cM(M,x_{+})
    \arrow[to=M,start anchor={west},bend right=20]\\
    \\
    \cM(\star,M)
    \arrow[to=M,start anchor={east},"\ev_{+}"]
    &|[alias=M]| M &
    \cM(M,\star)
    \arrow[to=M,start anchor={west},"\ev_{-}"']\\
    \\  
    \cM(M,M)
    \arrow[to=M,start anchor={east},bend right=20]
    & &
    \cM(M,M)
    \arrow[to=M,start anchor={west},bend left=20]
  \end{tikzcd},
  $$
  are transverse, i.e. if all the moduli spaces of bouncing trajectories
  $\bM(\cdot,\cdot)$ for all possible constraints at the ends are cut out
  transversely.
\end{definition}
Again, by classical transversality arguments, this condition is a generic
condition.

Notice that a trajectory in $\cM(M,M)$ is parametrized by a bounded
interval in $\R$. This introduces a new boundary component, in addition
to the usual possible break at an intermediate point, as the length of this interval may vanish. The same holds for all the moduli spaces where the trajectories have a component parametrized by a compact interval, in particular 
for moduli spaces of bouncing trajectories with at least one constraint
that is not a critical point, like $\bM(x_{-},M)$, $\bM(M,x_{+})$, or
$\bM(M,M)$.
%
%Observe moreover that for a generic choice of data, there are no critical
%points on $M\times\R^{N_{+}}$ nor $M\times\R^{N_{-}}$, so that this
%vanishing cannot occure one a part constraint to start or end at a
%critical point.

%%Regarding compacity, in addition to the usual possible break at an
%%intermediate point, bouncing trajectories may also undergo vanishing of
%%the length of part of the trajectory above or below the bounce. Notice
%%however that for a generic choice of data, there are no critical points
%%on $M\times\R^{N_{+}}$ nor $M\times\R^{N_{-}}$, so that this vanishing
%%cannot occure one a part constraint to start or end at a critical point.
In particular, we have:
\begin{align*}
  \partial\bM(x_{-},x_{+})
  =&\cM(x_{-},y)\times\bM(y,x_{+})
    \cup
    \bM(x_{-},y)\times\cM(y,x_{+})
  \\
  \partial\bM(x_{-},M)
  =&\cM(x_{-},y)\times\bM(y,M) \cup \bM(x_{-},y)\times\cM(y,M)\cup\\
   &\quad \cup \cM(x_{-},M)',\\
  \partial\bM(M,x_{+})
  =&\cM(M,y)\times\bM(y,x_{+}) \cup \bM(M,y)\times\cM(y,x_{+})\cup\\
   &\quad \cup \cM(M,x_{+})',\\
  \partial\bM(M,M)
  =&\cM(M,y)\times\bM(y,M) \cup \bM(M,y)\times\cM(y,M)\cup\\
   &\quad \cup \cM(M,M)''.
\end{align*}
Here, $\cM(x_{-},M)'$ denotes the inclusion of $\cM(x_{-},M)$ in
$\bM(x_{-},M)$ obtained by adding a zero length trajectory at the end,
$\cM(M,x_{+})'$ by adding it at start, and $\cM(M,M)''$ by adding it
either at end or at start or both.

From now on, we work under the assumption that the choice of Morse data
and base point is regular.

\section{Generators}
In this section, the generators of the stable Morse
fundamental group are defined. This construction is already sketched in
\cite{FloerPi1}, but we recall it there with a little more details.

%\begin{definition}
%  An augmentation like trajectory rooted at a critical point $x\in\Crit(f)$
%  is a path $u:(-\infty,0]\to M\times\R^{N_{+}}\times\R^{N_{-}}$ such
%  that:
%  \begin{enumerate}
%  \item
%    $\forall t\in(-\infty,0], u'(t)=-X(u(t))$
%  \item
%    $\lim_{t\to-\infty}u(t)=x$
%  \item
%    $u(0)\in M\times\R^{N_{+}}$.
%  \end{enumerate}
%\end{definition}

\begin{figure}[ht]
  \centering
  \begin{tikzpicture}[x={(-.5cm,-.3cm)},y={(1cm,-.1cm)},z={(0cm,1cm)}]
    \draw(0,0,0)--(0,4,0)--(0,4,0)--(2,4,0)--(2,4,0)--(2,0,0)--(2,0,0)--(0,0,0);
    \draw(0,0,0)--(0,0,2)--(0,4,2)--(0,4,0);
%    \draw(.5,2,1.5)to[out=-120,in=10](1,1.25,.75)to[out=-100,in=45](1,1,0);
%    \draw(1,1,0)to[out=-5,in=150](1,3,0);
%    \draw(.5,2,1.5)to[out=- 60,in=45](1,2.75,.75)to[out=-60,in=60](1,3,0);
    \coordinate (y) at (.5,2,1.5);
    \coordinate (x1) at (1,1.25,.75);
    \coordinate (x2) at (1,2.75,.75);
    \coordinate (p1) at (1,1,0);
    \coordinate (p2) at (1.5,3,0);

    \fill[fill=white] (y) node{\tiny$\bullet$}node[above]{$y$}
    to[out=-120,in=10,->] (x1)node{\tiny$\bullet$}node[left]{$x_{1}$}
    to[out=-100,in=45] (p1)
    to[out=-  5,in=150](p2)
    to[out= 100,in=-120](x2)node{\tiny$\bullet$}node[right]{$x_{2}$}
    to[out= 140,in=-60](y);

    \draw[blue, thick,->-] (y) to[out=-120,in= 10] (x1);
    \draw[blue, thick,->-](x1) to[out=-100,in= 45] (p1);
    \draw[blue, thick,->-=<](p2) to[out=  100,in=-120] (x2);
    \draw[blue, thick,->-=<](x2) to[out=  140,in=-60] (y);

    \draw(p1)to[out=-  5,in=150]
    coordinate[pos=.2](ww2)
    coordinate[pos=.4](ww4)
    coordinate[pos=.6](ww6)
    coordinate[pos=.8](ww8)
    (p2);
    \draw[densely dotted](y)to[out=-105,in=80](ww2);
    \draw[densely dotted](y)to[out=- 95,in=80](ww4);
    \draw[densely dotted](y)to[out=- 90,in=70](ww6);
    \draw[densely dotted](y)to[out=- 80,in=80](ww8);

    \draw(0,4,0)node[right]{$M$};
    \draw(0,4,2)node[right]{$\R^{N_{-}}$};
    \draw(2,4,0)node[below right]{$\R^{N_{+}}$};
  \end{tikzpicture}
  \caption{A stable Morse step through a critical point $y$.}
  \label{fig:Step-y}
\end{figure}
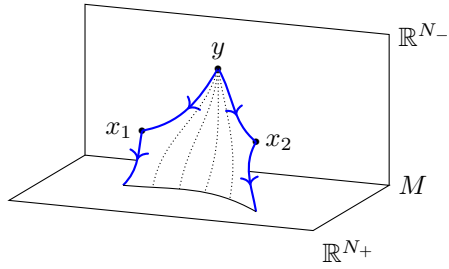

Recall from the previous section that we now work under the assumption
that the Morse data and base point are regular, which implies that the moduli
spaces of augmentation like trajectories are cutout transversely.

In particular, if $y$ is an index $1$ critical point of $f$, $\cM(y,M)$
is $1$-dimensional. When non trivial, the boundary of a component of such
a space consists of two points, which are two broken configurations
$(\beta_{0},\alpha_{0})$ and $(\beta_{1},\alpha_{1})$ where
\begin{itemize}
\item
  $\beta_{i}\in\cM(y,x_{i})$ for some $x_{i}$ such that $|x_{i}|=0$
\item
  $\alpha_{i}\in\cM(x_{i},M)$.  
\end{itemize}

\begin{definition}
  A (stable) Morse step through an index $1$ critical point $y$ is an
  oriented component of the moduli space $\cM(y,M)$ that has non empty
  boundary. A step $\sigma$ will be identified by the ends
  $(\beta_{0},\alpha_{0})$ and $(\beta_{1},\alpha_{1})$ of the component.

  If $\sigma$ is a step, $\bar{\sigma}$ will denote the same step with
  the opposite orientation.

%  Two steps $\sigma=((\beta_{0},\alpha_{0}),(\beta_{1},\alpha_{1}))$ and
%  $\sigma'=((\beta'_{0},\alpha'_{0}),(\beta'_{1},\alpha'_{1}))$ are said to
%  be consecutive $\alpha_{1}=\alpha'_{0}$.
\end{definition}

%Similarly, consider the space $\cMpre(\star,M)$ of couples $(T,u)$, where
%$T\in [0,+\infty)$ and $u:[0,T]\to M\times\R^{N_{+}}\times\R^{N_{-}}$ is
%such that
%\begin{itemize}
%\item 
%  $u'=-X(u)$
%\item
%  $u(0)\in\{\star\}\times\R^{N_{-}}$
%\item
%  $u(T)\in M\times\R^{N_{+}}$.
%\end{itemize}
%
%\begin{remark}
%  Again, this space can also be interpreted as the moduli space of some
%  finite energy solutions $(u,T)$ of the equation:
%  $$
%  u'(t)=-\chi(t)X(u(t))-(1-\chi(t))X_{q}(u(t)),
%  $$
%  where $\chi(t)=1$ if $t\in[0,T]$, and $\chi(t)=0$ otherwise.
%
%  The Floer counterpart of these configurations are the finite energy
%  tubes that satisfy the Floer equation, except near the ends where they
%  are holomorphic.  
%\end{remark}

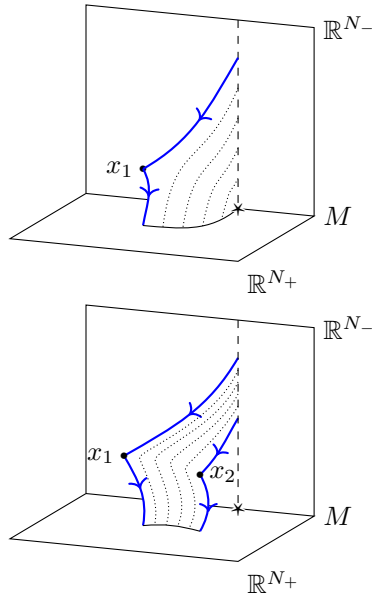
\begin{figure}[ht]
  \centering
  \begin{tikzpicture}[x={(-.5cm,-.3cm)},y={(1cm,-.1cm)},z={(0cm,1cm)}]
    \draw[thin](0,0,0)--(0,3,0)--(0,3,0)--(2,3,0)--(2,3,0)--(2,0,0)--(2,0,0)--(0,0,0);
    \draw[thin](0,0,0)--(0,0,2.5)--(0,3,2.5)--(0,3,0);
%    \draw(.5,2,1.5)to[out=-120,in=10](1,1.25,.75)to[out=-100,in=45](1,1,0);
%    \draw(1,1,0)to[out=-5,in=150](1,3,0);
%    \draw(.5,2,1.5)to[out=- 60,in=45](1,2.75,.75)to[out=-60,in=60](1,3,0);
    \coordinate (y1) at (0,2,2);
    \coordinate (x1) at (1,1.25,.75);
    \coordinate (p1) at (1,1.25,0);
    \coordinate (star) at (0,2,0);
    
    \fill[white] (y1)
    to[out=-120,in= 30] (x1)
    to[out=-60,in= 80] (p1)
    to[out=  20,in=-140] (star);

    \draw[dashed](0,2,2.5)--(star);

    \path(x1)node{\tiny$\bullet$}node[left]{$x_{1}$};

    \draw[blue,thick,->-] (y1) to[out=-120,in= 30] (x1);
    \draw[blue,thick,->-](x1) to[out=-60,in= 80] (p1);

    \draw[remember points, remember points/prefix=ww]
      (p1)to[out=-  5,in=-140](star);
    \path[remember points, remember points/prefix=xx]
      (x1)to(star);
    \path[remember points, remember points/prefix=yy]
      (y1)--(star);
    \foreach \i in {2,4,6,8}{ %
      \draw[densely dotted](yy\i)..controls +(1.1-\i/10,0,-1+\i/10) and
      +(-.3,0,1.25-\i/10) .. (ww\i); 
    }%
    \draw(0,3,0)node[right]{$M$};
    \draw(0,3,2.5)node[right]{$\R^{N_{-}}$};
    \draw(2,3,0)node[below right]{$\R^{N_{+}}$};
    \draw(0,2,0)node{$\wstar$};
  \end{tikzpicture}

  \begin{tikzpicture}[x={(-.5cm,-.3cm)},y={(1cm,-.1cm)},z={(0cm,1cm)},
    remember points/count=11,
    ]
    \draw[thin](0,0,0)--(0,3,0)--(0,3,0)--(2,3,0)--(2,3,0)--(2,0,0)--(2,0,0)--(0,0,0);
    \draw[thin](0,0,0)--(0,0,2.5)--(0,3,2.5)--(0,3,0);

    \coordinate (y1) at (0,2,2);
    \coordinate (y2) at (0,2,1.2);
    \coordinate (x1) at (1,1,.9);
    \coordinate (x2) at (1,2,.75);
    \coordinate (p1) at (1,1.25,0);
    \coordinate (p2) at (1,2,0);
    \coordinate (star) at (0,2,0);

    \fill[white] (y1)
    to[out=-120,in= 30] (x1)
    to[out=-60,in= 80] (p1)--(p2)
    to[out=  60,in=-60] (x2)
    to[out=  45,in=-100] (y2);

    \draw[blue,thick,->-] (y1) to[out=-120,in= 30] (x1);
    \draw[blue,thick,->-](x1) to[out=-60,in= 80] (p1);
    \draw[blue,thick,->-=<](p2) to[out=  60,in=-60] (x2);
    \draw[blue,thick,->-=<](x2) to[out=  45,in=-120] (y2);

    \path(x1)node{\tiny$\bullet$}node[left]{$x_{1}$};
    \path(x2)node{\tiny$\bullet$}node[right]{$x_{2}$};
    \draw[dashed](0,2,2.5)--(star);
    
    \draw[remember points, remember points/prefix=ww]
      (p1) to[out=  5,in=165] (p2);
    \path[remember points, remember points/prefix=xx]
      (x1) to[relative,out=10,in=180-10] (x2);
    \path[remember points, remember points/prefix=yy]
      (y1) to (y2);

    \foreach \i[evaluate=\i as \t using (\i/10*(1-\i/10))] in{2,4,6,8}{
      \draw[densely dotted](yy\i)
      ..controls +(.75,.1,-.5+\i/30) and +(-.03,.0,.03+\t*\t*2).. (xx\i)
      ..controls+(.03,0,-.05)and+(.3,.2+\i/50,.5)..(ww\i);
    }
%    \draw[densely dotted](yy2)to[out=-115,in=40](xx2)to[out=220 ,in=60](ww2);
%    \draw[densely dotted](yy4)to[out=-112,in=45](xx4)to[out=225 ,in=56](ww4); ; 
%    \draw[densely dotted](yy6)to[out=-108,in=50](xx6)to[out=230 ,in=53](ww6); ; 
%    \draw[densely dotted](yy8)to[out=-105,in=55](xx8)to[out=235 ,in=50](ww8);

    \draw(0,3,0)node[right]{$M$};
    \draw(0,3,2.5)node[right]{$\R^{N_{-}}$};
    \draw(2,3,0)node[below right]{$\R^{N_{+}}$};
    \draw(star)node{$\wstar$};
  \end{tikzpicture}
  \caption{The two kinds of stable Morse step through the base point $\star$.}
  \label{fig:Step-star}
\end{figure}

%\begin{proposition}
%  For a generic choice of auxiliray data $\Xi$ and base point $\star$, this
%  space is a $1$ dimensional manifold, and it has a natural
%  compactification $\cM(\star,M)$ for which
%  $$
%  \partial\cM(\star,M)=\{\star\}\bigcup_{x\in\Crit_{0}(f)}\cM(\star,x)\times\cM(x,M)
%  $$
%  Here, $\{\star\}$ stands for the zero-length trajectory at $\star$, and
%  \begin{multline*}
%    \cM(\star,x)=\{u:[0,+\infty)\to M\times\R^{N_{+}}\times\R^{N_{-}}, u'=-X(u),\\
%    u(0)\in\{\star\}\times\R^{N_{-}},\lim_{t\to+\infty}u(t)=x\}.
%  \end{multline*}
%\end{proposition}
%
%From now on, we work under the asumption the the moduli space
%$\cM(\star,M)$ is cutout transversely.

Consider the similar definition where $y$ is replaced by the base point
$\star$:
\begin{definition}
  A stable Morse step through the base point $\star$ is an oriented
  component of the moduli space $\cM(\star,M)$ with non empty boundary.

  Among these components, exactly one (with two possible orientations)
  has the zero-length trajectory at $\star$ on its boundary.

  All the other components have a boundary of the form
  $((\beta_{0},\alpha_{0}), (\beta_{1},\alpha_{1}))$, with
  $\beta_{i}\in\cM(\star,x_{i})$ for some $x_{i}\in\Crit_{0}(f)$, and
  $\alpha_{i}\in\cM(x_{i},M)$.
\end{definition}

%Notice again that the evaluation at the endpoint $T$ of the domain
%$[0,T]$ of a couple $(T,u)\in\cM(\star,M)$ defines a continuous map
%$$
%\ev_{+}:
%\begin{tikzcd}[row sep=1em]
%  \cM(\star,M)\arrow[r]&M\\
%  (T,u)\arrow[r,|->]&\pi(u(T))
%\end{tikzcd}.
%$$

From the notion of Morse step, we derive that of Morse loop:
\begin{definition}
  Two Morse steps (through a critical point or the base point) are
  said to be consecutive if the end $(\beta,\alpha)$ of the first one and
  the start $(\beta',\alpha')$  of the second one are such that
  $\alpha=\alpha'$.

  A Morse loop is a sequence of consecutive Morse steps starting and
  ending at $\star$ (the zero-length trajectory at $\star$).

  Finally, let $\MSLoops(\Xi,\star)$ be the group of Morse loops, for the
  concatenation, modulo the relations 
  $\sigma\bar\sigma=\bar\sigma\sigma=1$.
\end{definition}

\begin{remark}
  We will also speak of stable Morse free loops: these are loops of
  consecutive steps, but the constraint of starting from $\star$ is
  relaxed.
\end{remark}

Consider a Morse step $\sigma$. Since $\sigma$ is an oriented compact
interval, one can pick a parametrization of $\sigma$ by $[0,1]$ and
consider the evaluation map
$$
\ev(\sigma):
\begin{tikzcd}[row sep=1ex]
  [0,1]\arrow[r]& M\\
  \lambda\arrow[r,|->]&\ev_{+} (u_{\lambda})
\end{tikzcd}
$$
(where $\pi:M\times\R^{N_{+}}\to M$ is the first projection). This
defines a path in $M$. It does depend on the choice of parametrization
of $\sigma$, but its homotopy class with fixed ends does not, and we
obtain a well defined map:
\begin{equation}
  \label{eq:evaluationofloops}
  \begin{tikzcd}[row sep = 1ex]
    \MSLoops(\Xi,\star)\arrow[r,"\ev"]& \pi_{1}(M,\star)\\
    (\sigma_{1},\dots,\sigma_{N})\arrow[r,|->]&\ev(\sigma_{1})\cdots\ev(\sigma_{N})
  \end{tikzcd}.
\end{equation}

\begin{theorem}\label{thm:surjectivity}
  The above evaluation map $\ev:\MSLoops(\Xi,\star)\to\pi_{1}(M,\star)$ is surjective.
\end{theorem}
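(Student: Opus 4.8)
\emph{Reformulation as a graph.} The plan is to recognise the source and the map purely topologically. Let $\Gamma$ be the graph whose vertices are the index-$0$ augmentation trajectories $\alpha\in\cM(x,M)$, $|x|=0$ (a finite set, these moduli spaces being $0$-dimensional), together with one distinguished vertex $v_{\star}$ given by the zero-length trajectory at $\star$, and whose edges are the steps, i.e. the components with non-empty boundary of $\cM(y,M)$ for $|y|=1$ and of $\cM(\star,M)$, each edge being attached to the two vertices $\alpha_{0},\alpha_{1}$ recorded by its broken ends $(\beta_{i},\alpha_{i})$. By construction, two steps are consecutive exactly when the corresponding edges share a vertex, and the relations $\sigma\bar\sigma=\bar\sigma\sigma=1$ are exactly backtracking, so $\MSLoops(\Xi,\star)=\pi_{1}(\Gamma,v_{\star})$. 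The evaluation $\ev_{+}$ sends each vertex $\alpha$ to $\ev_{+}(\alpha)$ and each step to its evaluation path, assembling into a continuous map $\Gamma\to M$, again denoted $\ev$, with $\ev(v_{\star})=\star$; the map \eqref{eq:evaluationofloops} is then nothing but $\ev_{*}$. Thus it suffices to prove that $\ev\colon\Gamma\to M$ is surjective on $\pi_{1}$, i.e. that every based loop in $M$ is homotopic to the evaluation of some chain of consecutive steps; equivalently, that the pullback to $\Gamma$ of the universal cover $\widetilde{M}\to M$ is connected over the component of $v_{\star}$.

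\emph{Guiding principle.} This is the exact stable-Morse counterpart of the classical fact that the union of the unstable manifolds of the critical points of index $\le 1$ is $\pi_{1}$-surjective in $M$. As emphasised in Section~\ref{sec:MorseVersusStableMorse}, the moduli spaces $\cM(x,M)$ are the stable substitute for the unstable manifolds, and the point is that only the \emph{low-index} ones enter here: $\cM(x,M)$ is a finite set for $|x|=0$ and a $1$-manifold for $|x|=1$, so these carriers of the generators are as well behaved as in the genuine Morse case. The absence of control on the topology of $\cM(x,M)$, which obstructs the relations, concerns the higher-index moduli spaces and plays no role in surjectivity.

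\emph{The mechanism: the crocodile walk.} To establish the connectivity above I would take a loop $\gamma\colon[0,1]\to M$ based at $\star$ and perturb it to be generic with respect to all the evaluation maps in play. The crocodile walk then sweeps $\gamma$ onto the step-graph: following $\gamma$, one drags an augmentation trajectory along it, using the flow together with the segment moduli $\cM(M,M)$ and the bouncing data to interpolate, and records the index-$1$ steps traversed whenever the dragged trajectory must be rerouted through a broken configuration at an index-$0$ critical point. This simultaneously produces a sequence of consecutive steps and, from the swept $2$-parameter family, a homotopy rel endpoints in $M$ between $\gamma$ and the concatenated evaluation of that sequence. The base-point steps and the distinguished vertex $v_{\star}$ are precisely what allow the walk to start and end at $\star$, so the output lies in $\MSLoops(\Xi,\star)$ and evaluates to $[\gamma]$, giving surjectivity.

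\emph{Main obstacle.} The combinatorial part — stringing the recorded steps into a loop and identifying $\MSLoops(\Xi,\star)$ with $\pi_{1}(\Gamma,v_{\star})$ — is routine. The real work is geometric: one must show that the crocodile walk is always well defined and can always be continued, which rests on the surjectivity and fibration-like behaviour of $\ev_{+}$ near the relevant low-index strata. This in turn uses the completeness of the flow and the compactness of the moduli spaces up to breaking and zero-length degeneration, both guaranteed by the quadratic-at-infinity hypothesis, together with the transversality secured by the regularity of $(\Xi,\star)$. Keeping the entire sweep inside the controlled low-index strata — so as never to appeal to the wild higher moduli spaces — and checking that the swept family is a genuine homotopy, is the crux; the careful bookkeeping at the base point is a further, milder, technical point.
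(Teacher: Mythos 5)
Your proposal follows essentially the same route as the paper: perturb a based loop $\gamma$ to be generic, run the crocodile walk on $\cM(\gamma,M)$ to extract the sequence of consecutive Morse steps $\Theta(\gamma)$, and use the swept one-parameter family of trajectories (the patched evaluation squares) to exhibit a based homotopy from $\gamma$ to $\ev(\Theta(\gamma))$, which is exactly Proposition \ref{prop:croco homotopy}. The graph reformulation $\MSLoops(\Xi,\star)\cong\pi_{1}(\Gamma,v_{\star})$ is a harmless repackaging, and the ``main obstacle'' you flag (that the walk always continues and terminates) is resolved in the paper by observing that the corners of the compact $2$-manifold $\cM(\gamma,M)$ form a graph in which every vertex has valency $2$, so the boundary component through the zero-length trajectory at $\gamma(0)$ necessarily closes up at $\gamma(1)$ --- no bouncing trajectories are needed at this stage.
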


This will be proved in the next section, as a consequence of proposition
\ref{prop:croco homotopy}.

\section{Crocodile Walks}\label{sec:crocodilewalks}

In this section, we recall briefly the crocodile walk procedure from
\cite{FloerPi1}.

Consider a based loop $\gamma:[0,1]\to M$ in $M$, and suppose it is
transversal to all the evaluation maps
$$
\cM(M,x_{+})\xto{\ev_{-}} M
$$
for all the critical points $x_{+}$, and to $\cM(M,M)\xto{\ev_{-}}M$.

Consider then the moduli space
\begin{align*}
  \cM(\gamma,M)
  &=[0,1]\fibertimes{\gamma}{\ev_{-}}\cM(M,M)\\
  &=\{(\tau,u)\in[0,1]\times\cM(M,M),\gamma(\tau)=\ev_{-}(u)\}
\end{align*}
i.e. the space of arcs of flow lines running from some point in
$\gamma\times\R^{N_{-}}$ to some point in $M\times\R^{N_{+}}$. To be more
precise, the configuration also remembers the time parameter along
$\gamma$ of the starting point on $\gamma$, which makes a difference when
$\gamma$ is not injective, and in particular for $\star$, that can be
seen as $\gamma(0)$ or $\gamma(1)$.

This space is a smooth $2$-dimensional manifold with boundary and
corners. Its boundary is hence a collection of topological loops, and
consists of trajectories from $\gamma$ to $M$ that are broken once or
twice, or have zero length.

The corners of $\cM(\gamma,M)$ is the finite set
\begin{multline}
  B
  =
  \bigcup_{y,x\in\Crit(f)}\cM(\gamma,y)\times\cM(y,x)\times\cM(x,M)\\
  \bigcup_{x\in\Crit(f)}\cM(\star,x)\times\cM(x,M)
  \ \bigcup\  \{\star\}.
\end{multline}

It consists of:
\begin{itemize}
\item
  trajectories that are broken twice, in which case we call the first
  break along the trajectory the upper degeneracy, and the second the
  lower one,
\item
  trajectories that start from one end of $\gamma$ and are broken
  once: ``starting at an end of $\gamma$'' is the upper degeneracy, and the
  break is the lower one,
\item
  trajectories that start at an end of $\gamma$ (i.e. at $\star$, seen
  either as $\gamma(0)$ or $\gamma(1)$) and have zero length: ``starting
  at an end of $\gamma$'' is the upper degeneracy, and ``having zero
  length'' is the lower one,
\end{itemize}

At each such configuration, one can resolve either the upper or the lower
degeneracy, preserving the other one. For convenience, we call the
resolution of an upper or lower degeneracy an upper or lower ``gluing'',
even if the degeneracy is not actually a trajectory break.

Such an operation selects a $1$-dimensional arc in the boundary of
$\cM(\gamma,M)$, whose other end is again a point in $B$. In other words,
the boundary components of $\cM(\gamma,M)$ that contain corners form a
graph, whose vertices are the elements of $B$ and do all have a valency of
$2$, and whose edges correspond to upper or lower gluings.

In particular, this graph is a collection of loops, and each of them
defines (up to orientation) a cyclic sequence of elements of $B$ related
by alternating upper and lower gluings.

We call such a sequence the \emph{crocodile walk} along the corresponding
boundary component of $\partial\cM(\gamma,M)$. Arcs that correspond to
upper gluings are called upper steps, and arcs associated to lower
gluings are called lower steps.

\medskip

We are interested in the boundary component of $\cM(\gamma,M)$ that
contains the zero length trajectory $\{\star\}$, seen as $\gamma(0)$.

The ``upper gluing'' in this case, consists in relaxing the condition of
starting at an end of $\gamma$ while keeping that of having zero length:
it is nothing but the path $\gamma$ itself, seen as a one parameter of
zero length trajectories, and its other end is the zero length trajectory
at $\star$, now seen as $\gamma(1)$.

In the other direction, the walk associated to this boundary component
starts with a lower gluing, and gives an alternating sequence of upper
and lower gluings
\begin{equation}
  \label{eq:crocowalkdetail}
  \begin{tikzcd}[start anchor=east,end anchor=west]
    \llap{$\{\star\}=\,$}
    u_{1}\ar[r,bend right=20,"\ell_{1}",pos=.7]&
    u_{2}\ar[r,bend left =20,"h_{1}"]&
    u_{3}\ar[r,bend right=20,"\ell_{2}"]&
    u_{4}\ar[r,phantom,"\dots"]&
    u_{2N-1}\ar[r,bend right =20,"\ell_{N}",pos=.8]&
    u_{2N}\rlap{$\,=\{\star\}$}
  \end{tikzcd}
\end{equation}
until the zero length trajectory at $\gamma(1)$ is reached. Here, each
$u_{i}$ is a configuration in $B$, and each edge $\ell_{i}$ is a lower
gluing.%

In particular, $l_{i}$ is a family of trajectories of the form
$\lambda_{i}\cdot\sigma_{i}$, where $\lambda_{i}$ is a fixed trajectory
that is either a trajectory from $\gamma$ to an index $1$ critical point
$y_{i}$ or the zero length trajectory at $\star$, and $\sigma_{i}$ is a
Morse step through $y_{i}$ or $\star$. Observe moreover that the Morse
steps in the sequence $\sigma_{1}\dots\sigma_{N}$ are consecutive.

\begin{definition}\label{def:croco walk}
With the above notations, define the Morse loop associated to $\gamma$ by
the crocodile walk process as
\begin{equation}\label{eq:pushdown}
  \Theta(\gamma)=\sigma_{1}\dots\sigma_{N}.
\end{equation}
\end{definition}

\begin{figure}
  \centering
  \begin{tikzpicture}[xscale=.5,yscale=1,out=rand*20, in=180+rand*20, relative]

    \draw(0,3)--++(10,0);
%    \node[left ] at ( 0,3) {$\wstar$};
%    \node[right] at (10,3) {$\wstar$};
    \pgfmathsetseed{10}%

    \firststep{0}
    \upperstarstepright{0}
    \lowerstep{1}
    \upperstep{2}
    \lowerstep{3}
    \upperstarstepleft{4}
    \lowerstarstep{5}
    \upperstarstepright{6}
    \lowerstep{7}
    \upperstarstepleft{8}
    \laststep{9}

    \path( 0,3) node[above]{$\gamma(0)$};%
    \path(10,3) node[above]{$\gamma(1)$};
  \end{tikzpicture}
  \caption{Crocodile walk associated to a loop $\gamma$.}
  \label{fig:crocowalk}
\end{figure}
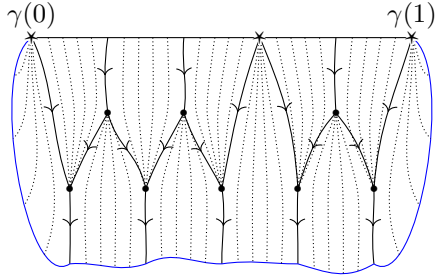

Theorem \ref{thm:surjectivity} is a direct consequence of the following proposition:
\begin{proposition}\label{prop:croco homotopy}
  With the above notations, $\gamma$ and $\ev(\Theta(\gamma))$ are
  homotopic as topological based loops.
\end{proposition}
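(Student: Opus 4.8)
The plan is to read the homotopy directly off the two-dimensional moduli space $\cM(\gamma,M)$, using the boundary component $C$ around which the crocodile walk runs. The key feature is that $\cM(\gamma,M)$ carries two evaluation maps to $M$: the map $\ev_{+}$ (projection to $M$ of the endpoint in $M\times\R^{N_{+}}$), and the map $\ev_{-}$, which by definition of the fibre product sends a configuration $(\tau,u)$ to $\gamma(\tau)$. I would first identify the restriction $\ev_{+}|_{C}$, then show that it is nullhomotopic, and finally combine the two.

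For the identification of $\ev_{+}|_{C}$, I would go once around $C$ and examine each step. Along a lower step $\ell_{i}$ the configuration moves in $\cM(y_{i},M)$ with a fixed upper piece $\lambda_{i}\in\cM(\gamma,y_{i})$ (which is $0$--dimensional since $|y_{i}|=1$), so $\ev_{+}$ traces exactly the path $\ev(\sigma_{i})$. Along an ordinary upper step the moving factor is $\cM(\gamma,x)$ with $|x|=0$, while the augmentation factor $\cM(x,M)$ is discrete and hence locally constant, so $\ev_{+}$ is constant; the fact that this constant augmentation matches across the step is precisely the consecutivity $\alpha=\alpha'$ of the two neighbouring Morse steps. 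Finally, along the one distinguished upper step the trajectories have zero length, so $\ev_{+}=\ev_{-}$ sweeps over $\gamma$ (traversed backwards, from $\gamma(1)$ to $\gamma(0)$). Reading these contributions in the cyclic order of the walk and discarding the constant pieces gives, up to based homotopy and reparametrisation,
$$
\ev_{+}|_{C}\ \simeq\ \ev(\sigma_{1})\cdots\ev(\sigma_{N})\cdot\gamma^{-1}=\ev(\Theta(\gamma))\cdot\gamma^{-1}.
$$

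To see that $\ev_{+}|_{C}$ is nullhomotopic, I would use the projected flow. For $u\in\cM(\gamma,M)$ let $\bar u\colon[0,1]\to M$ be the reparametrised projection to $M$ of the underlying trajectory, a path from $\ev_{-}(u)=\gamma(\tau_{u})$ to $\ev_{+}(u)$. Then $\Phi(u,s)=\bar u(s)$ defines a map $C\times[0,1]\to M$, i.e.\ a free homotopy from $\ev_{-}|_{C}$ to $\ev_{+}|_{C}$; at the basepoint $u_{1}=\{\star\}$ the trajectory is the zero-length one, so $\Phi(u_{1},\cdot)\equiv\star$ and the homotopy is based. It therefore suffices to see that $\ev_{-}|_{C}$ is nullhomotopic. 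But $\ev_{-}(u)=\gamma(\tau_{u})$, so $\ev_{-}|_{C}=\gamma\circ\tau$, where $\tau\colon C\cong S^{1}\to[0,1]$ is the continuous $[0,1]$-coordinate of the fibre product with $\tau(u_{1})=0$. Since $[0,1]$ is contractible rel $0$, the loop $\tau$ is based-nullhomotopic, hence so is $\ev_{-}|_{C}$, and hence so is $\ev_{+}|_{C}$. Combined with the previous step this yields $\ev(\Theta(\gamma))\simeq\gamma$, which is the assertion, and in particular gives Theorem \ref{thm:surjectivity}.

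The step I expect to be the main obstacle is the continuity of $\Phi$ across the corners of $C$, where the trajectories break: one must verify that the reparametrised projected paths converge to the broken limits (in which each intermediate critical point appears as a stationary subinterval), which is the standard convergence statement for these moduli spaces under the regularity assumption together with compactness up to breaking. One also needs the continuity of $\tau$ into $[0,1]$ at configurations starting exactly at $\star$, which is exactly why the time parameter along $\gamma$ is remembered. The combinatorial bookkeeping of the first step — matching fixed versus moving factors of each broken configuration and tracking orientations so that the distinguished step contributes $\gamma^{-1}$ and the cyclic word comes out as $\ev(\Theta(\gamma))\cdot\gamma^{-1}$ — is routine once the dimension count is in hand, but deserves care to get the sign right.
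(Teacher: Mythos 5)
Your proof is correct and is essentially the paper's argument in a different packaging: the homotopy in both cases is the evaluation of the uniformly reparametrised projected trajectories along the walk, deforming $\ev_{-}$ (which factors through $\gamma$ via the contractible $[0,1]$-coordinate) into $\ev_{+}$ (which reads off $\ev(\Theta(\gamma))$, the upper steps contributing constants by consecutivity). The paper assembles this as a square with sides $\gamma$, $\ev(\Theta(\gamma))$ and constants at $\star$, whereas you run it around the whole boundary circle $C$ and contract $\ev_{-}|_{C}=\gamma\circ\tau$; the two formulations are equivalent, and your continuity caveats at broken configurations are exactly what the paper's $-f$-reparametrisation handles.
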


\begin{proof}
  We can use the value of $-f$ as a parametrization of the trajectories
  in $\cM(y,M)$. Moreover, for compactness reason, the lengths of the
  corresponding intervals are uniformly bounded. This means that we can
  find a uniform reparametrization $\theta$ of all the trajectories by
  $[0,1]$. As a consequence, we obtain an evaluation map of the form
  $$
  \begin{tikzcd}[row sep =1ex]
    \cM(y,M)\times[0,1]\arrow[r]& M\\
    (u,t)\arrow[r,mapsto]&\pi(u(\theta_{u}(t)))  
  \end{tikzcd}.
  $$
  We pick such choices for all the moduli spaces involved in the
  crocodile walk. Since a step $\ell_{i}$ or $h_{i}$ is (the product of a
  constant trajectory with) a component of a moduli space of the form
  $\cM(y,M)$ or $\cM(\star,M)$  that is homeomorphic to a segment, it can be
  parametrized by $[0,1]$, and the above evaluation induces a collection
  of maps
  $$
  \begin{tikzcd}[row sep =1ex]
    \Delta_{i}=[0,1]^{2}\arrow[r,"\ev_{\ell_{i}}"]& M
  \end{tikzcd},\quad
  \begin{tikzcd}[row sep =1ex]
    \Delta'_{i}=[0,1]^{2}\arrow[r,"\ev_{h_{i}}"]& M
  \end{tikzcd}.
  $$
  Finally,
  $$
  {\ev_{\ell_{i}}}_{|_{\{1\}\times[0,1]}}={\ev_{h_{i}}}_{|_{\{0\}\times[0,1]}}
  \text{ and }
  {\ev_{h_{i}}}_{|_{\{1\}\times[0,1]}}={\ev_{\ell_{i+1}}}_{|_{\{0\}\times[0,1]}}
  $$
  so that these squares patch side to side to form a wider square, whose
  evaluation to $M$ is
  \begin{itemize}
  \item
    constant equal to $\star$ on the left and right sides,
  \item
    the evaluation $\ev(\Theta(\gamma))$ on the lower side,
  \item
    factors through $\gamma$ on the upper side, by a map that sends $0$
    to $0$ and $1$ to $1$. 
  \end{itemize}
  The upper side hence covers the loop $\gamma$ with degree $1$, and this
  square defines an homotopy from $\gamma$ to
  $\ev(\Theta(\gamma))$.
\end{proof}

\subsection{Upward crocodile walk.}\label{sec:upcroco}

Finally, in all the above construction, up and down, start and end, can
be swapped, and we can do the same work in $\cM(M,\gamma)$ instead of
$\cM(\gamma,M)$. The boundary of this space can be walked through in in
the very same way as before, and we speak of the upward crocodile walk. 

Equivalently, we could replace $f$ by $-f$ (which also replaces the
quadratic form at infinity by its opposite, and hence swaps $\R^{N_{+}}$
and $\R^{N_{-}}$) in the above construction, but still remember the
direction of the trajectories.

Notice that in this version, the role previously played by index
$0$ and $1$ points is now played by index $n$ and $n-1$ points.

\bigskip

Consider now a generic loop in $M$, and both downward and upward
crocodile walks on this loop. Each of them is a one parameter family of
trajectories in $\cM(\gamma,M)$ and $\cM(M,\gamma)$ that split as a
sequence of upper and lower steps. Recall that as a fiber product the
spaces $\cM(\gamma,M)$ and $\cM(M,\gamma)$ are endowed with projection to
$[0,1]$, which is the source of $\gamma$. Observe moreover that during a
lower (resp. upper) step of the downward (resp. upward) crocodile walk,
this projection is constant. Call the corresponding point on $\gamma$ the
attachment point of the corresponding step.

Then, for a generic choice of Morse data and loop $\gamma$, the attaching
points of lower and upper steps are all different, except for the steps
through the base point, where it is always $\star$ by definition.

This special behaviour makes the steps through the base point a particular
case, that may require special attention in all the subsequent
definitions. To avoid this and keep things uniform, we prefer to regard
this as a lack of genericity, due to the fact we use the same base point
for upward and downward crocodile walks, and solve this we fix another
base point $\ostar$, that is be a small perturbation of $\star$, to be
used as the base point for upward crocodile walks.

Notice that transversality with $\cM(M,M)\xto{\ev_{+}}M$ and
$\cM(M,M)\xto{\ev_{-}}M$ for the base point is an open condition in our context,
we can choose $\ostar$ in such a way that there is a path $\delta$ from
$\star$ to $\ostar$ such that all its points remain transverse to both
$\ev_{-}$ and $\ev_{+}$ on $\cM(M,M)$.

\begin{definition}\label{def:costep}
A Morse co-step is a component with boundary of a moduli space $\cM(b,M)$
for some $b\in\Crit_{n-1}(f)$, or of $\cM(\ostar,M)$.  
\end{definition}

The upward crocodile walk then turns a (generic) loop $\gamma$ based at
$\ostar$ into a sequence
\begin{equation}
  \label{eq:upwardcrocodile}
  \Xi(\gamma)=\tau_{1}\cdots \tau_{k}  
\end{equation}
of consecutive Morse co-steps, from $\ostar$ to $\ostar$, such that its
evaluation in $M$ is homotopic to $\gamma$.

\section{Relations}

\subsection{Relation patch}

%Recall the projection
%$$
%\bM(a,M)\xto{\pi_{1}}\cM(a,M)
%$$
%obtained by forgetting the part of the trajectory that is below the bounce.
%\textcolor{red}{Pourquoi est-ce qu'on rappelle ce point ici ?}

Consider a sequence $\sigma=(\sigma_{1}\dots\sigma_{k})$ of consecutive
Morse steps. To fix notations, consider that $\sigma_{i}$ starts from
$(\beta_{i},\alpha_{i})\in\cM(y_{i},x_{i})\times\cM(x_{i},M)$ and ends at
$(\beta'_{i},\alpha'_{i})\in\cM(y_{i},x'_{i})\times\cM(x'_{i},M)$. 
Because the steps are consecutive, we have, for $i\in\{1,\dots,k-1\}$:
$$
x'_{i}=x_{i+1} \quad\text{ and }\quad \alpha'_{i}=\alpha_{i+1}.
$$
Notice that we can have $y_{i}=\star$, in which case $\beta_{i}$ (resp.
$\beta'_{i}$) is the $0$-length trajectory at $\star$ or a trajectory
from $\star$ to $x_{i}$ (resp. $x'_{i}$).
\begin{definition}
  Consider an index $n$ critical point $a\in\Crit_{n}(f)$. A path
  $[0,1]\xto{\lambda}\partial\bM(a,M)$ is said to extend a sequence of
  consecutive steps $\sigma=(\sigma_{1}\dots\sigma_{k})$ if there are
  times $0=t_{0}<\dots<t_{2k+1}=1$ such that
  \begin{itemize}
  \item
    on $[t_{2i},t_{2i+1}]$, $\lambda(t)$ is a path of trajectories that
    are all broken at $x_{i+1}$ and end with $\alpha_{i+1}$, 
  \item
    on $[t_{2i-1},t_{2i}]$, $\lambda(t)$ is a path of the form
    $\{u\}\times\sigma_{i}$, where $u$ is a fixed trajectory in
    $\bM(a,y_{i})$.
 \end{itemize}
\end{definition}

\begin{figure}
  \centering
  \begin{tikzpicture}[%
    relative=true,out=rand*10,in=180+rand*10,%
    decoration={amplitude=25pt,segment length=40pt},%
    croco/start=midup,%
    croco/end=midup,%
    croco/save nodes=true,%
    croco/save node prefix=AA,%
    croco/proba=.75,%
    ]
    \pgfmathsetseed{3}
    
    \path (0,4)coordinate(a)node{\tiny$\bullet$}node[above]{$a$};
    \draw[croco]
    (-3,0)--(3,0);
    \foreach \i [evaluate=\i as \t using \i/\thecroconodenumber]
       in {0,...,\thecroconodenumber}{
      \draw[trajectory](a)to[relative=false, out=-140+\t*100, in=90](AA\i);
    }
    \path($(AA1)+(0,-1.6)$)node{$\sigma_{1}$};
    \path($(AA2)+(0,-1.6)$)node{$\sigma_{2}$};
    \path($(AA3)+(0,-1.6)$)node{$\sigma_{3}$};
    \path($(AA4)+(0,-1.6)$)node{$\dots$};
    \path($(AA5)+(0,-1.6)$)node{$\sigma_{k}$};
    \end{tikzpicture}  
  \caption{A path in $\bM(a,M)$, based on a sequence of Morse steps.}
\end{figure}
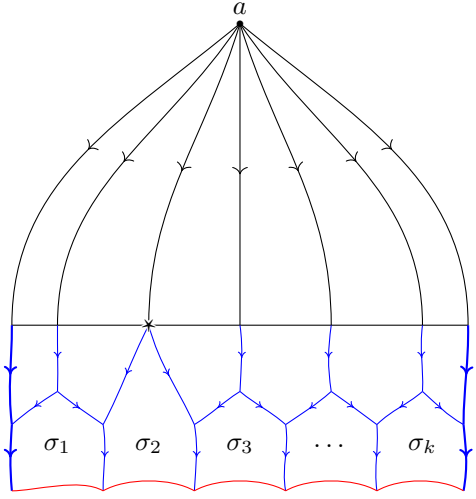

\begin{remark}
  This notion, of a path of trajectories ``extending'' a sequence of
  Morse steps, can obviously be defined in other moduli spaces. For
  instance, the construction of the crocodile walk along a loop $\gamma$
  in $M$ can be seen as a path in $\cM(\gamma,M)$ that extends the
  corresponding Morse loop.
\end{remark}

When a path $\lambda$ extends a sequence of consecutive Morse steps, we
will refer to the sequence of Morse steps as ``the lower part'' of
$\lambda$.

%%For a generic choice of $\lambda$, the
%%trajectories that go through an index $1$ critical point or $\star$ after
%%the bounce are isolated, and in particular, this does not happen at the
%%ends of $\gamma$. However, when playing with upward and downward
%%crocodile walks, one has to face situations where $\lambda$ starts and
%%ends with trajectories that go through the base point. As already
%%mentioned in section \ref{sec:upcroco}, this leads to an annoying lack of
%%genericity, and to avoid it all at once, we choose to perturb the base
%%point $\star$ into another generic point $\ostar$, that will be used for
%%upward crocodile walks. 

\begin{definition}\label{def:patch}
  A relation patch is a couple $(a,(\lambda_{0},\dots,\lambda_{N}))$
  where $a\in\Crit_{n}(f)$ is an index $n$ critical point, and
  $(\lambda_{0},\dots,\lambda_{N})$ is a cyclic sequence of paths in
  $\bM(a,M)$ such that
  \begin{itemize}
  \item
    each $\lambda_{i}$ is an extension of a sequence of consecutive Morse
    steps,
  \item
    except when $N=0$ and $\lambda_{0}$ is a loop, for each $i$, there is
    a trajectory $u_{i}$ from $a$ to some $n-1$ critical point $b_{i}$ or
    $\ostar$, such that the trajectories at the end of $\lambda_{i}$ and
    at the start of $\lambda_{i+1}$ both start with $u_{i}$, i.e. take
    the form
    \begin{align*}
      \lambda_{i}(1)&=u_{i}\cdot v\cdot\alpha&
      \lambda_{i+1}(0)&=u_{i}\cdot v'\cdot\alpha'
    \end{align*}
    where $\alpha$ (resp. $\alpha'$) is the end (resp. start) of the
    last (resp. first) step $\lambda_{i}$ (resp. $\lambda_{i+1}$) is an
    extension of (see figure \ref{fig:patch}).
  \end{itemize}
  Moreover we call the set $((v,\alpha),(v',\alpha'))$ formed by the
  parts of the end of $\lambda_{i}$ and the start of $\lambda_{i+1}$ that
  are below their shared trajectory $u_{i}$ an \emph{open edge} of the
  patch $(a,(\lambda_{0},\dots,\lambda_{N}))$.
\end{definition}

%\begin{remark}
%  In the particular case where $\lambda=(\lambda_{0})$ is a single path
%  and this path is in fact a loop, then no compatibility condition is
%  required at the end or beginning of $\lambda$, and the patch has no
%  open edge.   
%\end{remark}

\begin{remark}
  In practice, we will only need to work with patches having 0,1, or 2 open
  edges.
\end{remark}

\begin{figure}
  \centering
  \begin{tikzpicture}[
    scale=.6,
    relative=true,out=rand*10,in=180+rand*10,
    decoration={amplitude=25pt,segment length=20pt},
    croco/start=midup,
    croco/end=midup,
    croco/save nodes=true,
    croco/save node prefix=AA,
    croco/proba=.75,
    ]
    
    \pgfmathsetseed{7}%
    \begin{scope}[shift={(-5.5,0)}]      
      \begin{pgfonlayer}{foreground}
        \path(0,0)coordinate(a)node{\tiny$\bullet$}node[above]{\tikzoutline{$a$}};
      \end{pgfonlayer}
      \path(  0:2)coordinate(b0)node{\tiny$\bullet$}node[right]{$b_{0}$};
      \path(180:2)coordinate(b1)node{\tiny$\bullet$}node[left]{$b_{1}$};
      \path(b0)+( -60:2)coordinate(c0);%node{\tiny$\bullet$}node[right]{$c_{0}$};
      \path(b0)+(  60:2)coordinate(c1);%node{\tiny$\bullet$}node[right]{$c_{1}$};
      \path(b1)+( 120:2)coordinate(c2);%node{\tiny$\bullet$}node[left]{$c_{2}$};
      \path(b1)+(-120:2)coordinate(c3);%node{\tiny$\bullet$}node[left]{$c_{3}$};
      
      \draw[trajectory] (a)to(b0);
      \draw[trajectory] (a)to(b1);
      \draw[trajectory](b0)to(c0);
      \draw[trajectory](b0)to(c1);
      \draw[trajectory](b1)to(c2);
      \draw[trajectory](b1)to(c3);

      \path[croco](c3)to[out=-20,in=-160](c0);
      \path[remember points,
        remember points/count=(\thecroconodenumber+1),
        remember points/prefix=BB](b1)to[out=-60,in=-120](b0);
      \addtocounter{croconodenumber}{-1}
      \foreach \i in{1,...,\thecroconodenumber}{
        \draw[thin, densely dotted](a) ..controls (BB\i)..(AA\i);
      }

      \path[croco](c1)to[out=-20,in=-160](c2);
      \path[remember points,
        remember points/count=(\thecroconodenumber+1),
        remember points/prefix=BB](b0)to[out=-60,in=-120](b1);
      \addtocounter{croconodenumber}{-1}
      \foreach \i in{1,...,\thecroconodenumber}{
        \draw[thin, densely dotted](a) ..controls (BB\i)..(AA\i);
      }

      \node[below] at ($(a)!.5!(b0)$){$u_{0}$};
      
      \path(b0)--++(-60:2)%node[above right]{$v'_{0}$}
      --++(-75:2.2)coordinate(c)--++(-70:1.5)coordinate(d);
      \draw[decoration={brace,raise=12pt,amplitude=3pt},decorate]
      ($(b0)!.1!(c)$)--(c)node[pos=.5,above right=0pt and 13pt]{$v$};
      \draw[decoration={brace,raise=12pt,amplitude=3pt},decorate]
        (c)--(d)node[pos=.5,above right=0pt and 13pt]{$\alpha$};
        
      \path(b0)--++(60:2)--++(75:2.2)coordinate(c)--++(70:1.5)coordinate(d);
      \draw[decoration={brace,raise=12pt,amplitude=3pt,mirror},decorate]
        ($(b0)!.1!(c)$)--(c)node[pos=.5,below right=0pt and 13pt]{$v'$};
      \draw[decoration={brace,raise=12pt,amplitude=3pt,mirror},decorate]
        (c)--(d)node[pos=.5,below right=0pt and 13pt]{$\alpha'$};

        \draw[->]($(b1)+(-110:6)$)to[out=-20,in=180+20]
        node[pos=.5,below]{$\lambda_{0}$}
        ($(b0)+(-70:6)$);

        \draw[<-]($(b1)+( 110:6)$)to[out=20,in=180-20]
        node[pos=.5,above]{$\lambda_{1}$}
        ($(b0)+( 70:6)$);
    \end{scope}

    \pgfmathsetseed{35}%
    \begin{scope}[shift={(5.5,0)}]
      \def\tmpangle{20}
      \path(0,0)coordinate(a)node{\tiny$\bullet$}node[above]{$a$};
      \path(  0:2)coordinate(b0)node{\tiny$\bullet$}node[right]{$b_{0}$};
      \path(b0)+( -60:2)coordinate(c0);
      \path(b0)+(  60:2)coordinate(c1);
%      \path( 180-\tmpangle:3)coordinate(c2)node{$\wstar$};
%      \path(-180+\tmpangle:3)coordinate(c3)node{$\wstar$};
      \path( 180-\tmpangle:3)coordinate(c2);
      \path(-180+\tmpangle:3)coordinate(c3);

      \coordinate(b1)at($(a)!.5!(c2)$);
      \coordinate(bb1)at($(a)!.5!(c3)$);
      \draw[trajectory] (a)to(b0);
%      \draw[trajectory] (a)to(b1);
      \draw[trajectory](b0)to(c0);
      \draw[trajectory](b0)to(c1);
      
      \draw[trajectory,out=5,in=180-5] (a)to(c2);
      \ifnum\tmpangle>0{
        \draw[trajectory,out=5,in=180-5] (a)to(c3);
        \foreach \t[parse=true] in {1,2,...,(floor(\tmpangle/2))}{
          \draw[very thin,out=5,in=180-5] (a)to($(c2)!\t/(floor(\tmpangle/2)) !(c3)$);
        }
        \begin{pgfonlayer}{foreground}
          \draw[very thick](c2)--(c3);
          \node at (c2){\small$\wostar$};
          \node at (c3){\small$\wostar$}; 
        \end{pgfonlayer}
      }\fi
    
%      \path[croco,croco/start=star](c3)to[out=-20,in=-160](c0);
      \path[croco,croco/start=midup](c3)to[out=-20,in=-160](c0);
      \path[remember points,
        remember points/count=(\thecroconodenumber+1),
        remember points/prefix=BB](bb1)to[out=-60,in=-120](b0);
      \foreach \i [parse=true] in{1,...,\thecroconodenumber-1}{
        \draw[thin, densely dotted](a) ..controls (BB\i)..(AA\i);
      }
        
%      \path[croco,croco/end=star](c1)to[out=-20,in=-160](c2);
      \path[croco,croco/end=midup](c1)to[out=-20,in=-160](c2);
      \path[remember points,
        remember points/count=(\thecroconodenumber+1),
        remember points/prefix=BB](b0)to[out=-60,in=-120](b1);
      \foreach \i [parse=true] in{1,...,\thecroconodenumber-1}{
        \draw[thin, densely dotted](a) ..controls (BB\i)..(AA\i);
      }

      \draw[->]($(b1)+(-115:6)$)to[out=-20,in=180+20]
      node[pos=.5,below]{$\lambda_{0}$}
      ($(b0)+(-75:6)$);
      
      \draw[<-]($(b1)+( 127:5.3)$)to[out=20,in=180-20]
      node[pos=.5,above]{$\lambda_{1}$}
      ($(b0)+( 75:6)$);

    \end{scope}
  \end{tikzpicture}
  \caption{Two relation patches, with 2 ``open edges''
    (the non red parts of the boundary).
    For convenience, the single trajectory from $a$ to $\ostar$ is
    repeated on the drawing to form an edge.}
  \label{fig:patch}

\end{figure}
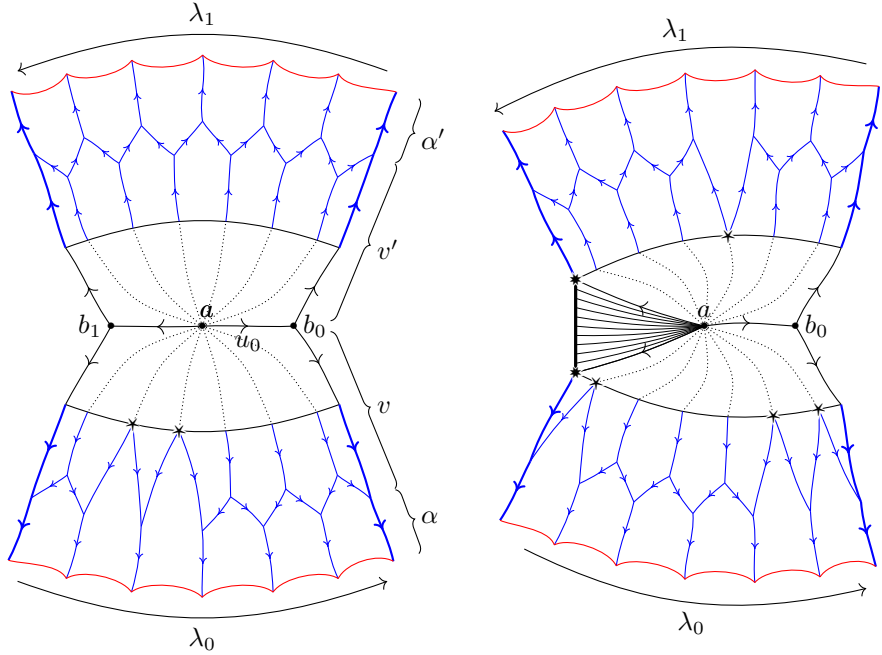

\begin{proposition}\label{prop:PatchDisc}
  Given a patch $(a,(\lambda_{0},\dots,\lambda_{N}))$ as in definition
  \ref{def:patch}, there is a continuous $2$-disc
  $\Delta\xrightarrow{\ev} M$ whose boundary splits as a cyclic sequence
  of intervals, where $\ev$ is alternately
  \begin{itemize}
  \item
    the evaluation in $M$ of the sequence of Morse steps forming the
    bottom part of $\lambda_{i}$,
  \item
    the projection in $M$  of the open edge between $\lambda_{i}$ and
    $\lambda_{i+1}$.
  \end{itemize}
\end{proposition}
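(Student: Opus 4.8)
The plan is to construct $\Delta$ as a cone with apex corresponding to the critical point $a$, obtained by assembling the (reparametrized) evaluations of the trajectories appearing in each path $\lambda_i$ and then zipping consecutive paths together along the trajectories $u_i$ they share. First, as in the proof of Proposition \ref{prop:croco homotopy}, I would fix a uniform reparametrization of all the flow trajectories involved, so that every configuration in $\bM(a,M)$ met along the $\lambda_i$ becomes a trajectory issued from $a$ and parametrized by $s\in[0,1]$, with $s=0$ at $a$ and $s=1$ at its final landing point in $M\times\R^{N_{+}}$. Projecting to $M$ and composing with $\lambda_i$ yields, for each $i$, a continuous map
$$
E_i\colon [0,1]_t\times[0,1]_s\to M,\qquad E_i(t,s)=\pi\big(\lambda_i(t)(s)\big),
$$
with $E_i(t,0)=\pi(a)$ for every $t$, and $E_i(\cdot,1)$ equal, up to reparametrization, to the evaluation in $M$ of the sequence of Morse steps that $\lambda_i$ extends (the breaking pieces of $\lambda_i$ contribute constant stretches to the $s=1$ trace, absorbed by the reparametrization). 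Guaranteeing this joint continuity in $(t,s)$ across the broken configurations of $\lambda_i$, together with the uniform convergence of all trajectories to $a$ as $s\to0$, is the main technical point; it is handled exactly by the uniform reparametrization device already used for the crocodile homotopy.

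Next I would record the gluing data between consecutive paths. By Definition \ref{def:patch}, $\lambda_i(1)=u_i\cdot v\cdot\alpha$ and $\lambda_{i+1}(0)=u_i\cdot v'\cdot\alpha'$ share the trajectory $u_i$ from $a$ to the point $b_i$ (or to $\ostar$). Hence there is a parameter value $s_i\in[0,1]$, corresponding to $b_i$, with $E_i(1,s)=E_{i+1}(0,s)$ for all $s\in[0,s_i]$, the common value at $s=s_i$ being $\pi(b_i)$. I then form $\Delta$ from the disjoint union $\bigsqcup_{i=0}^{N}[0,1]_t\times[0,1]_s$ by two identifications: collapse every edge $\{s=0\}$ to a single apex (legitimate since each $E_i(\cdot,0)\equiv\pi(a)$), and glue the edge $\{t=1\}$ of the $i$-th square to the edge $\{t=0\}$ of the $(i+1)$-st square for $s\in[0,s_i]$ only. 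Each collapsed square is a triangle with apex the common vertex, and the $N+1$ triangles are zipped cyclically around the apex up to height $s_i$. Since we glue only the portions adjacent to the apex, the resulting space is a cone over a circle, i.e. a $2$-disc, and the $E_i$ patch to a continuous $\ev\colon\Delta\to M$ because they agree on the zipped edges and at the apex. (In the degenerate case $N=0$ with $\lambda_0$ a loop, the two slanted edges of the single square glue entirely, and one again obtains a cone over $\ev(\lambda_0(\cdot,1))$.)

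Finally I would read off the boundary. The free (unglued) part of $\partial\Delta$ consists of the bases $\{s=1\}$ of the triangles together with the upper portions $\{s\in[s_i,1]\}$ of the slanted edges. Traversing $\partial\Delta$ cyclically, the base of the $i$-th triangle is $E_i(\cdot,1)$, that is the evaluation in $M$ of the Morse steps in the bottom part of $\lambda_i$; it is followed by the free part of the $\{t=1\}$ edge of triangle $i$, namely the trace $s\mapsto\pi(v\cdot\alpha)$ running up to $\pi(b_i)$ at $s=s_i$, and then by the free part of the $\{t=0\}$ edge of triangle $i+1$, the trace $s\mapsto\pi(v'\cdot\alpha')$ running back down. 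These two segments meet at $\pi(b_i)$ and together realize precisely the projection in $M$ of the open edge $((v,\alpha),(v',\alpha'))$ between $\lambda_i$ and $\lambda_{i+1}$. Hence $\partial\Delta$ splits as the claimed cyclic alternation of Morse-step evaluations and open-edge projections, which completes the construction.
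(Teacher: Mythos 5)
Your proposal is correct and follows essentially the same route as the paper: uniformly reparametrize the trajectories of $\bM(a,M)$ to get evaluation squares $[0,1]\times[0,1]\to M$ for each $\lambda_i$, collapse the edge corresponding to $a$ to an apex, and glue consecutive squares along the portion of their vertical sides carrying the shared trajectory $u_i$ (the paper normalizes this portion to $[0,\tfrac12]$ where you use a variable height $s_i$, a purely cosmetic difference). Your write-up is somewhat more explicit about the boundary identification and the degenerate case $N=0$, but the construction and the key device (uniform reparametrization, as in Proposition \ref{prop:croco homotopy}) coincide with the paper's.
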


\begin{proof}
  All the trajectories in $\bM(a,M)$ are parametrized by compact
  intervals, whose length is continuous, and bounded away from $0$. In
  particular, they can be uniformly reparametrized, to obtain an
  evaluation map
  $$
  \bM(a,M)\times[0,1]\to M.
  $$

  Given a patch as in definition \ref{def:patch}, each path $\lambda_{i}$
  induces an evaluation from a square $[0,1]\times[0,1]$ to $M$.  Notice
  that all these trajectories start at $a$, so that this evaluation
  factors through the quotient $[0,1]\times[0,1]/[0,1]\times\{0\}$.

  Moreover, we can fix parametrization of the trajectories $(u_{i})$
  that are shared by consecutive paths and manage to have them appear on
  the first halves $\{1\}\times[0,\tfrac{1}{2}]$ and
  $\{0\}\times[0,\tfrac{1}{2}]$ of the relevant sides. Gluing the
  rectangles along these half sides, and collapsing $[0,1]\times\{0\}$ to
  a point, we obtain a disc $\Delta_{a,\lambda}$ associated to the
  initial patch, endowed with an evaluation map to $M$ satisfying the
  desired properties.
\end{proof}

\begin{definition}
  Consider two relation patches $(a_{0},\lambda)$ and $(a_{1},\mu)$, and
  choose an open edge $((v_{0},\alpha_{0}),(v'_{0},\alpha'_{0}))$ and
  $((v_{1},\alpha_{1}),(v_{1}',\alpha'_{1}))$ on each of them. The two
  patches are said to match along these open edges if
  \begin{equation}
    \label{eq:patchmatch}
    (v_{0},\alpha_{0})=(v'_{1},\alpha'_{1})
    \quad\text{ and }\quad
    (v'_{0},\alpha'_{0})=(v_{1},\alpha_{1}).
  \end{equation}
  (see figure \ref{fig:patchmatch}).
\end{definition}

\begin{remark}\label{rem:ConcatAtMatchingEdges}
  Let $\lambda_{i}$ be the path at the end of which
  $(v'_{0},\alpha'_{0})$ appears in the first patch, and
  $\sigma=(\sigma_{i,1},\dots,\sigma_{i,k_{i}})$ be the sequence of Morse
  steps $\lambda_{i}$ is based on. Similarly, let $\mu_{j}$ be the path at the start of which
  $(v_{1},\alpha_{1})$ appears in the second patch, and
  $\tau=(\tau_{j,1},\dots,\tau_{j,l_{j}})$ be the sequence of Morse steps
  $\mu_{j}$ is based on.

  Then it is a consequence of the definition that $\sigma_{i,k_{i}}$ and
  $\tau_{j,1}$ are consecutive steps, so that the concatenated sequence
  $$
  \sigma_{i,1}\cdots\sigma_{i,k_{i}}\cdot \tau_{j,1}\cdots\tau_{j,l_{j}}
  $$
  is still a sequence of consecutive Morse steps. 
\end{remark}

\begin{definition}\label{def:tree of matching patches}
  A tree of matching patches is a finite planar tree $T$, with one
  relation patch $(a,\lambda)$ on each vertex, such that to each edge is
  associated to a pair of matching open edges, such that
  \begin{enumerate}
  \item
    all the open edges match by pairs,
  \item
    at each vertex, the planar structure of the tree and of the patch
    agree.
  \end{enumerate}
\end{definition}

\begin{remark}
  In fact, we are only interested in the case where the tree $T$ is a
  line. In this case, we may speak of a line of matching patches. 
\end{remark}

\begin{remark}
  As already observed in remark \ref{rem:ConcatAtMatchingEdges}, matching
  open edges allow to concatenate the corresponding sequences of Morse
  steps. In a tree of matching patches, since all the open edges match by
  pairs, the concatenation of all these sequences define a Morse (free)
  loop. Moreover, gluing the domains $\Delta_{a,\lambda}$ associated to
  each patch (see \ref{prop:PatchDisc}) along matching edges, we obtain a
  domain homeomorphic to a planar tubular neighbourhood of the initial
  tree, which is a disc. Moreover, it is endowed  with an evaluation map
  to $M$, whose restriction to the boundary is the evaluation of the
  Morse loop.
\end{remark}

\begin{proposition}\label{prop:evaltree}
  Given a tree of matching relation patches, the concatenation of all the
  the bottom parts of the patches according to the matching open edges
  form a Morse free loop, whose evaluation in $M$ is trivial.

  This loop will still be called the basis, or the bottom part, of the tree
  of patches.
\end{proposition}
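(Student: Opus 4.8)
The plan is to assemble, out of the per-patch discs furnished by Proposition \ref{prop:PatchDisc}, a single continuous $2$-disc $\Delta\xto{\ev}M$ whose boundary realizes exactly the evaluation of the concatenated Morse free loop; triviality of the loop then follows at once, since a free loop that bounds a disc in $M$ is null-homotopic. Concretely, to each vertex of the tree $T$ I attach the disc $\Delta_{a,\lambda}$ of Proposition \ref{prop:PatchDisc}, whose boundary alternates between bottom parts (evaluations of sequences of consecutive Morse steps) and open edges (projections to $M$ of the pieces $(v,\alpha)$ lying below the shared trajectory $u_{i}$). I then glue these discs to one another along the open edges that are matched across the edges of $T$.

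First I would check that the gluing is well defined at the level of evaluation maps. Along an edge of $T$ joining two patches, the matching condition \eqref{eq:patchmatch} identifies the data $(v_{0},\alpha_{0}),(v'_{0},\alpha'_{0})$ of one open edge with $(v'_{1},\alpha'_{1}),(v_{1},\alpha_{1})$ of the other. Since the evaluation of an open edge is simply the projection to $M$ of these pieces, the two evaluations agree along the glued arc, and condition (2) of Definition \ref{def:tree of matching patches} — that the planar structures of the tree and of each patch agree — guarantees that the arcs are identified with compatible orientations. Thus the individual maps $\Delta_{a,\lambda}\to M$ patch together into a single continuous map on the glued domain.

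Next I would identify the glued domain and read off its boundary. Gluing the discs $\Delta_{a,\lambda}$ along matched open edges produces a space homeomorphic to a planar tubular neighbourhood of the embedded tree $T$; as $T$ is contractible, such a neighbourhood is a disc $\Delta$. By condition (1) of Definition \ref{def:tree of matching patches}, \emph{all} open edges are matched, so in $\Delta$ every open-edge arc has become interior, and $\partial\Delta$ consists solely of bottom parts. Remark \ref{rem:ConcatAtMatchingEdges} shows that across each matching the terminal step of one bottom part and the initial step of the next are consecutive, so the bottom parts concatenate cyclically into a single sequence of consecutive Morse steps, i.e.\ into the Morse free loop that is the basis of the tree, and $\ev|_{\partial\Delta}$ is exactly its evaluation.

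The main thing to be careful about is the bookkeeping of orientations along $\partial\Delta$, so that consecutive bottom parts concatenate head-to-tail rather than clash, together with the topological claim that the glued neighbourhood of a planar tree is genuinely a disc; both rest on the planarity hypothesis (2) and on the fact that a tree is simply connected. Granting these, I will have produced a disc $\Delta\xto{\ev}M$ whose boundary evaluates to the Morse free loop, so this loop bounds in $M$ and its class is trivial, as claimed.
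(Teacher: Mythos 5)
Your proof is correct and follows essentially the same route as the paper: the paper's argument (given in the remark preceding the proposition) likewise glues the discs $\Delta_{a,\lambda}$ of Proposition \ref{prop:PatchDisc} along matched open edges to obtain a domain homeomorphic to a planar tubular neighbourhood of the tree, hence a disc, whose boundary evaluation is the concatenated Morse free loop, and concludes triviality from the fact that this loop bounds. Your additional checks on compatibility of the evaluations along matched edges and on orientations are sensible elaborations of points the paper leaves implicit.
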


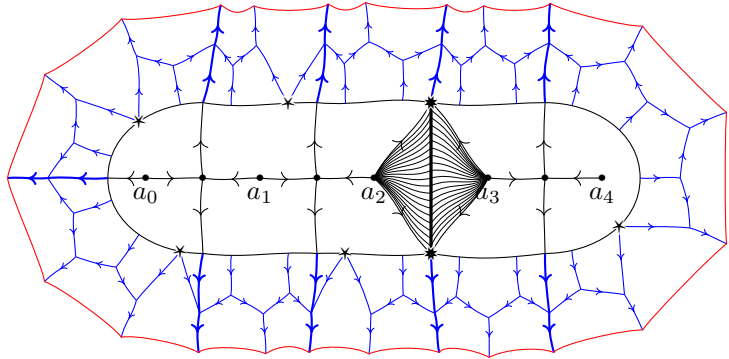
\begin{figure}
  \centering
  \begin{tikzpicture}[
    relative,out=rand*10,in=180+rand*10,
    decoration={amplitude=15pt,segment length=25pt}]
    
    \pgfmathsetseed{5}
    \foreach \i in {0,...,4}{
      \path (1.5*\i,0) coordinate (a\i) node{\tiny$\bullet$}node[below]{$a_{\i}$};
    }
    \foreach \i[evaluate=\i as \ii using \i+1] in {0,...,3}{
      \coordinate (b\i) at ($(a\i)!.5!(a\ii)$);
      }
    \coordinate(b-1)at(-.5,0);
    \coordinate(b4) at ($(a4)+(.5,0)$);
    \foreach \i in {0,1,3}{
      \path (b\i) node{\tiny$\bullet$};%node[above left]{$b_{\i}$};
      }
    \foreach \i[evaluate= \i as \ii using \i+1] in {0,...,3}{
      \coordinate(c\i0) at ($(b\i)+(0,-1)$);
      \coordinate(c\i1) at ($(b\i)+(0, 1)$);
    }

    \foreach \i[evaluate= \i as \ii using \i+1] in {0,1,3}{
      \draw[trajectory](a\ii)to(b\i);
      \draw[trajectory](a\i)to(b\i);
      \draw[trajectory](b\i)to(c\i0);
      \draw[trajectory](b\i)to(c\i1);
    }
    \draw[trajectory](a2)to[out=10,in=180+10](c20);
    \draw[trajectory](a2)to[out=10,in=180+10](c21);
    \draw[trajectory](a3)to[out=10,in=180+10](c20);
    \draw[trajectory](a3)to[out=10,in=180+10](c21);

    \begin{pgfonlayer}{foreground}
      \draw[very thick](c20)--(c21);
      \node at (c20) {$\wostar$};
      \node at (c21) {$\wostar$};
    \end{pgfonlayer}

    \foreach \t in {.05,.1,...,.96}{
      \draw[ultra thin](a2)to[out=15,in=180+15]($(c20)!\t!(c21)$);
      \draw[ultra thin](a3)to[out=15,in=180+15]($(c20)!\t!(c21)$);
    }
    
    \draw[trajectory](a0)to(b-1);

    \begin{scope}[croco/start=continue,croco/end=midup]
%      \draw[croco,croco/start=star,
      \draw[croco,croco/start=midup,
      relative=false, out=-90,in=180,
      ](b-1)to(c00);
      \draw[croco](c00)to(c10);
%      \draw[croco,croco/end=star](c10)to(c20);
      \draw[croco,croco/end=midup](c10)to(c20);
      \draw[croco](c20)to(c30);
      \draw[croco,relative=false](c30)to[in=-90, out=0] (b4) to[out=90, in=0](c31);
%      \draw[croco,croco/end=star](c31)to(c21);
      \draw[croco,croco/end=midup](c31)to(c21);
      \draw[croco](c21)to(c11);
      \draw[croco](c11)to(c01);
      \draw[croco,croco/end=close,
      relative=false, in=90, out=180](c01)to(b-1);
    \end{scope}

  \end{tikzpicture}
  \caption{A line of matching patches.}
  \label{fig:patchmatch}
\end{figure}

\subsection{Generators for the relations}
We abuse notation and for $\gamma\in\MSLoops(\Xi,\star)$, we write $\Theta(\gamma)$
for $\Theta(\ev(\gamma))$ (see definition \ref{def:croco walk}).

We define the following relations on $\MSLoops(\Xi,\star)$:
\begin{enumerate}
\item
  $\gamma \sim \Theta(\gamma)$ for $\gamma \in \MSLoops(\Xi,\star)$, (see definition
  \ref{def:croco walk}),
%\item
%  $\Theta(\gamma_{1}\cdot\gamma_{2}) \sim \Theta(\gamma_{1}) \cdot
%  \Theta(\gamma_{2})$, for $\gamma_{1}, \gamma_{2} \in
%  \MSLoops(\Xi,\star)$
\item
  $w\gamma w^{-1}\sim1$ where $\gamma$ is the bottom part of a tree of
  matching patches, and $w$ is a Morse path from $\star$ to the start of
  $\gamma$,
\end{enumerate}
and let $\cR=\cR(\Xi,\star)$ denote the normal subgroup generated by these relations.

\begin{remark}  
  Notice that in the (not stable) Morse setting, relations of type
  $(1)$ are trivial, while relations of type $(2)$ are
  generalizations to arbitrary index of the usual relations associated to
  index $2$ critical points. 

  Relation of type $(1)$ naturally appear when considering functoriality
  properties of the construction. Even in the (not stable) Morse
  setting, the natural way to define a morphism between the groups
  associated to two different Morse functions, is to flow down the loops
  associated to the first function along the gradient of the second. From
  our point of view, this means applying the crocodile walk associated to
  the second function to the loops associated to the first function.

  In the particular case where the two functions are the same (or small
  perturbations of the same function) this operation should lead to the
  identity. In the (not stable) Morse setting, this operation happens
  to be the identity at the loop level, i.e. before taking any relation,
  into account. In general, $\gamma$ and $\Theta(\gamma)$ are different,
  and this identification is required.
\end{remark}

\begin{proposition}
  Relations do all evaluate to the trivial class in $\pi_{1}(M,\star)$.  
\end{proposition}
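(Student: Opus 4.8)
The plan is to verify that each of the two families of generating relations for $\cR$ maps to the identity under the homomorphism $\ev$ of \eqref{eq:evaluationofloops}; since $\ev$ is a group morphism and $\cR$ is the normal subgroup generated by these relations, checking the generators suffices to conclude that every element of $\cR$ evaluates trivially.

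For relations of type $(1)$, the element set to the identity is $\gamma\,\Theta(\gamma)^{-1}$, whose image is $\ev(\gamma)\,\ev(\Theta(\gamma))^{-1}$. Recalling the abuse of notation $\Theta(\gamma)=\Theta(\ev(\gamma))$, I would apply Proposition \ref{prop:croco homotopy} to the topological based loop $\ev(\gamma)$: it asserts precisely that $\ev(\gamma)$ and $\ev(\Theta(\ev(\gamma)))$ are homotopic as based loops, hence equal in $\pi_{1}(M,\star)$. Therefore $\ev(\gamma)\,\ev(\Theta(\gamma))^{-1}=1$, and relations of type $(1)$ evaluate trivially.

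For relations of type $(2)$, the element is $w\gamma w^{-1}$, where $\gamma$ is the bottom part of a tree of matching patches and $w$ is a Morse path from $\star$ to the start of $\gamma$. Here I would invoke Proposition \ref{prop:evaltree}, which states that the evaluation in $M$ of this bottom part is trivial, i.e.\ null-homotopic as a free loop. Writing $\ev(w\gamma w^{-1})=\ev(w)\,\ev(\gamma)\,\ev(w)^{-1}$, the conjugating path $w$ is exactly what converts the free null-homotopy of $\ev(\gamma)$ into a based null-homotopy of $\ev(w)\,\ev(\gamma)\,\ev(w)^{-1}$ at $\star$; thus this class is trivial in $\pi_{1}(M,\star)$ as well.

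The argument carries essentially no obstacle: both propositions have already performed the geometric work, and what remains is only to read off their consequences for $\ev$. The single point deserving a word of care is the passage, in type $(2)$, from the triviality of $\ev(\gamma)$ as a \emph{free} loop to the triviality of the \emph{based} class $w\gamma w^{-1}$. This is a standard feature of $\pi_{1}$ — a free loop is null-homotopic if and only if its conjugate by any connecting path is trivial based at the basepoint — and is precisely the role played by $w$ in the definition of the relation.
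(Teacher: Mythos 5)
Your proposal is correct and follows essentially the same route as the paper's own proof: reduce to the generators, handle type $(1)$ via Proposition \ref{prop:croco homotopy} and type $(2)$ via Proposition \ref{prop:evaltree}, then conclude for the whole normal subgroup since $\ev$ is a morphism. Your extra remark on passing from the free null-homotopy of the bottom loop to the based triviality of its conjugate $w\gamma w^{-1}$ is a welcome point of care that the paper leaves implicit.
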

\begin{proof}
As already observed, $\gamma\in\MSLoops(\Xi,\star)$ and $\Theta(\gamma)$
have homotopic evaluations in $M$,
%As a consequence, this also holds for
%$\Theta(\gamma_{1}\cdot\gamma_{2})$ and
%$\Theta(\gamma_{1})\cdot\Theta(\gamma_{2})$.
and a direct consequence of
proposition \ref{prop:evaltree} is that an evaluation of a relation
associated to a tree of matching patches is trivial. Finally, since the
evaluation is trivial on all the generators, it is also trivial on the
generated normal subgroup $\cR$. 
\end{proof}

As a consequence, the evaluation map to $\pi_{1}(M,\star)$ is well
defined on the quotient $\MSLoops(\Xi,\star)/\cR$.
\begin{theorem}\label{thm:pi1}
  The evaluation defines a group isomorphism:
  $$
  \begin{tikzcd}
    \MSLoops(\Xi,\star)/\cR
    \arrow[r,"\ev","\sim"']&\pi_{1}(M,\star)
  \end{tikzcd}.
  $$
\end{theorem}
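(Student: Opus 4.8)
The plan is to exhibit an explicit inverse to $\ev$. Surjectivity is already furnished by Theorem \ref{thm:surjectivity} (itself a consequence of Proposition \ref{prop:croco homotopy}), so the whole content is injectivity, i.e. $\ker(\ev)\subseteq\cR$. I would package both directions at once by constructing a map $\Phi\colon\pi_{1}(M,\star)\to\MSLoops(\Xi,\star)/\cR$ sending the class of a generic based loop $\gamma$ in $M$ to the class of its downward crocodile walk $\Theta(\gamma)$, and then checking that $\Phi$ and $\ev$ are mutually inverse. Granting that $\Phi$ is a well-defined group homomorphism, the two compositions are immediate: Proposition \ref{prop:croco homotopy} says $\ev(\Theta(\gamma))$ is homotopic to $\gamma$, so $\ev\circ\Phi=\id$; and for a Morse loop $\gamma\in\MSLoops(\Xi,\star)$, relation $(1)$ gives $\gamma\sim\Theta(\ev(\gamma))$ in $\MSLoops/\cR$, i.e. $[\gamma]=\Phi([\ev(\gamma)])$, so $\Phi\circ\ev=\id$. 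Thus $\ev$ is an isomorphism with inverse $\Phi$.

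Everything therefore reduces to two points about $\Phi$, of which the second is the real difficulty. First, the homomorphism property $\Theta(\gamma_{0}\gamma_{1})\sim\Theta(\gamma_{0})\Theta(\gamma_{1})$: after a small generic homotopy separating the two factors near $\star$, the crocodile walk of a concatenation is the concatenation of the two crocodile walks up to a reparametrization at the basepoint, and this is absorbed by relation $(1)$ together with the trivial relations $\sigma\bar\sigma=1$. Second, and crucially, well-definedness: if $\gamma_{0}$ and $\gamma_{1}$ are homotopic rel $\star$, then $\Theta(\gamma_{0})\sim\Theta(\gamma_{1})$ modulo $\cR$.

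For well-definedness I would take a generic homotopy $H\colon[0,1]^{2}\to M$, $H(s,\cdot)=\gamma_{s}$, and study the $3$-dimensional parametrized moduli space $\cM(H,M)=[0,1]^{2}\fibertimes{H}{\ev_{-}}\cM(M,M)$. For all but finitely many $s$ the slice is the surface $\cM(\gamma_{s},M)$ carrying the crocodile walk of $\gamma_{s}$, and the family of these walks sweeps, inside $\partial\cM(H,M)$, a cobordism between $\Theta(\gamma_{0})$ and $\Theta(\gamma_{1})$. The analysis then consists in listing the codimension-one degenerations crossed as $s$ varies: reparametrizations and handle slides not involving a high-index break change $\Theta(\gamma_{s})$ only by relation $(1)$ or by cancellations $\sigma\bar\sigma=1$, whereas the genuinely new events are the corners of $\cM(H,M)$ whose top break occurs at an index $n$ critical point $a$. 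Near such a corner the local model is precisely a bouncing configuration in $\bM(a,M)$, and the resulting modification of the walk inserts the bottom part of a relation patch rooted at $a$ (more generally, of a short line of matching patches), conjugated by a Morse path $w$ from $\star$ to the attachment point; by Proposition \ref{prop:evaltree} and relation $(2)$ each such insertion is trivial in $\MSLoops/\cR$.

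The main obstacle is exactly this bifurcation analysis. One must verify, by a dimension count on $\bM(a,M)$ and on the parametrized space, that index $n$ points are the only critical points that can arise as the top degeneracy of the relevant corners in a generic $1$-parameter family; that the local model there is genuinely a single patch or a short line of matching patches; and that the orientations and basepoint-conjugations assemble so that the change in $\Theta(\gamma_{s})$ is literally one of the type $(2)$ generators of $\cR$. Equivalently, this is the statement that the bounding disc of a null-homotopy can be tiled by the patch discs $\Delta_{a,\lambda}$ of Proposition \ref{prop:PatchDisc}; the classical analogue is the decomposition of such a disc into the $2$-cells attached at index $2$ points, but here the tiles are patch discs rooted at index $n$ points, which is precisely the ``relocation'' of relations to top index anticipated in Section \ref{sec:MorseVersusStableMorse}.
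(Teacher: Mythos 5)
Your reduction is sound and matches the paper's: surjectivity is Theorem \ref{thm:surjectivity}, relation $(1)$ gives $\Phi\circ\ev=\id$, and everything hinges on the homotopy invariance of $\Theta$ modulo $\cR$. The gap is in the mechanism you propose for that invariance. You claim that the codimension-one degenerations of the parametrized space $\cM(H,M)$ are corners ``whose top break occurs at an index $n$ critical point $a$'', with local model a bouncing configuration in $\bM(a,M)$. A dimension count rules this out: the parametrized space $\cM(H,y)=[0,1]^{2}\times_{H,\ev_{-}}\cM(M,y)$ has expected dimension $2-|y|$, so in a generic one-parameter family of loops the only new breaks that appear are at index $2$ critical points (together with tangencies of $\gamma_{s}$ to the relevant evaluation maps); for $n>2$ an index $n$ break simply cannot occur in $\cM(H,M)$. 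Moreover the relation patches of Definition \ref{def:patch} live in $\bM(a,M)$, i.e.\ consist of \emph{bouncing} trajectories, and these are never limits of the non-bouncing configurations that make up $\cM(H,M)$, so they cannot arise as local models of its corners. Handling the index $2$ degenerations directly would require control on the topology of the two-dimensional moduli spaces $\cM(y_{2},M)$, which is exactly what the paper explains is unavailable in the stable setting.

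The paper's actual route is different and is designed precisely to avoid this analysis. It introduces the \emph{upward} crocodile walk (Section \ref{sec:upcroco}), based at a perturbed point $\ostar$, and chooses the homotopy so that the parameter values where the downward walk degenerates and those where the upward walk degenerates are disjoint (equation \eqref{eq:disjointcriticalsets}). On an interval where the upward walk stays transverse, Lemmas \ref{lem:fiberproductofintervals} and \ref{lem:reparmetrization} synchronize the two walks so that at each time the pair of configurations forms a bouncing trajectory; the lower steps of the upward walk are arcs in $\cM(a,M)$ for index $n$ points $a$, and it is from these, paired between the two endpoints of the interval, that the line of matching relation patches is produced (Lemma \ref{lem:homotopy reg up}). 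So the index $n$ points and the bouncing trajectories enter through the upward walk, not through degenerations of $\cM(H,M)$; your tiling picture is the right intuition for what is ultimately achieved, but the step you flag as ``the main obstacle'' would not go through as you have set it up.
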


Surjectivity is a direct consequence of proposition \ref{prop:croco homotopy}. The end of this section is dedicated to the proof of
injectivity.

\subsection{Preparation lemmas}

The crocodile walk process turns a topological loop into a Morse loop by
means of a one parameter family of trajectories that start on the
topological loop, and end in $M$. However, the starting point of these
trajectories does not need to run injectively from the start to the end
of the topological loop: in general, it starts at the start, and ends at
the end, but may go back and forth along the loop in between.

The construction to come to prove theorem \ref{thm:pi1} will be playing with downward
and upward crocodile walks. In this section, we give technical lemmas
that allow the required reparametrization to ``synchronize'' them.

\begin{lemma}\label{lem:fiberproductofintervals}
  Let $\alpha:I_{\alpha}=[0,1]\to[0,1]$ and
  $\beta:I_{\beta}=[0,1]\to[0,1]$ be continuous and piecewise smooth maps,
  with no common singular value, sending $0$ to $0$ and $1$ to $1$.

  Then there are continuous maps $\phi_{\alpha}$ and $\phi_{\beta}$,
  sending $0$ to $0$ and $1$ to $1$, making the following diagram
  commutative:
  \begin{equation}
    \label{eq:fiber product of intervals}
    \begin{tikzcd}
      {[0,1]}
      \arrow[dr]
      \arrow[drr,bend left=10,"\phi_{\beta}"]
      \arrow[rdd,bend right=10,"\phi_{\alpha}"']
      \\
      &
      I_{\alpha}\fibertimes{\alpha}{\beta} I_{\beta}
      \arrow[r,"\pi_{2}"]\arrow[d,"\pi_{1}"]&
      I_{\beta}\arrow[d,"\beta"]
      \\
      &I_{\alpha}\arrow[r,"\alpha"]&{[0,1]}
    \end{tikzcd}.
  \end{equation}
\end{lemma}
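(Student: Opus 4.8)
The plan is to recognize the fiber product concretely and reduce the statement to a connectivity assertion. Writing $Z := I_\alpha \fibertimes{\alpha}{\beta} I_\beta = \{(u,v)\in[0,1]^2 : \alpha(u)=\beta(v)\}$, a commuting diagram as in \eqref{eq:fiber product of intervals} is exactly the data of a continuous path $[0,1]\to Z$, $s\mapsto(\phi_\alpha(s),\phi_\beta(s))$; the requirement that $\phi_\alpha,\phi_\beta$ send $0$ to $0$ and $1$ to $1$ says precisely that this path runs from $(0,0)$ to $(1,1)$ (both points lie in $Z$ since $\alpha(0)=\beta(0)=0$ and $\alpha(1)=\beta(1)=1$). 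So I would first reduce the lemma to the claim that there is a path in $Z$ from $(0,0)$ to $(1,1)$, after which $\phi_\alpha=\pi_1\circ(\cdots)$ and $\phi_\beta=\pi_2\circ(\cdots)$ satisfy all the requirements automatically.

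Next I would establish that $Z$ is a compact $1$-manifold with boundary. Set $F(u,v)=\alpha(u)-\beta(v)$, so $Z=F^{-1}(0)$. At a point $(u,v)\in Z$ with common value $c=\alpha(u)=\beta(v)$, the hypothesis that $\alpha$ and $\beta$ have no common singular value means $c$ is a regular value of at least one of them; if $c$ is regular for $\beta$ then $\beta$ is invertible near $v$ and $Z$ is locally the graph $u\mapsto(u,\beta^{-1}(\alpha(u)))$, while if $c$ is regular for $\alpha$ it is locally the graph $v\mapsto(\alpha^{-1}(\beta(v)),v)$. In either case $Z$ is locally an interval (or half-interval along $\partial([0,1]^2)$), so $Z$ is a compact $1$-manifold with boundary whose boundary $\partial Z$ consists of the points where $Z$ meets $\partial([0,1]^2)$ transversally.

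The heart of the argument is to pin down $\partial Z$. Evaluating $F$ on the four edges gives $F(u,0)=\alpha(u)\ge 0$ and $F(1,v)=1-\beta(v)\ge 0$ on the bottom and right edges, and $F(u,1)=\alpha(u)-1\le 0$, $F(0,v)=-\beta(v)\le 0$ on the top and left edges; moreover $F(1,0)=1>0$ and $F(0,1)=-1<0$, so $(1,0),(0,1)\notin Z$ whereas $(0,0),(1,1)\in Z$. Thus, traversing $\partial([0,1]^2)$, the sign of $F$ is nonnegative along the whole bottom–right arc and nonpositive along the whole top–left arc, so $F$ can switch sign only at the two corners $(0,0)$ and $(1,1)$; every zero of $F$ in the interior of an edge is one-sided and corresponds to a tangency of $Z$ with $\partial([0,1]^2)$, i.e. to an interior point of $Z$. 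Hence $\partial Z=\{(0,0),(1,1)\}$. By the classification of compact $1$-manifolds, the only non-circle component is a single arc, whose two endpoints are necessarily $(0,0)$ and $(1,1)$; this arc is the sought path.

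I expect the main obstacle to be the rigorous local analysis at the two corners and at the degenerate edge-zeros: one must check that $(0,0)$ and $(1,1)$ really are endpoints of $Z$ and that the interior-edge zeros really are tangencies rather than genuine crossings. The clean way to handle this is to note that, not being common singular values, each of $0$ and $1$ is a regular value of at least one of $\alpha,\beta$; after possibly exchanging the roles of $\alpha$ and $\beta$, the relevant function is locally invertible near the corner, which exhibits $Z$ near $(0,0)$ (resp. $(1,1)$) as a single half-open arc emanating into the square, and similarly shows each interior edge-zero $(u_*,0)$ with $\alpha(u_*)=0$ (and its analogues) to be an interior point of $Z$, since the transverse direction stays inside $[0,1]^2$.
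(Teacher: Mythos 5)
Your proposal is correct and follows essentially the same route as the paper: both identify the fiber product as a compact piecewise smooth $1$--manifold (using the no-common-singular-value hypothesis to write it locally as a graph over one factor), show that its manifold boundary consists exactly of one point over $0$ and one over $1$ (namely the corners $(0,0)$ and $(1,1)$), and then invoke the classification of compact $1$--manifolds to extract the unique arc component, whose parametrization composed with the two projections gives $\phi_{\alpha}$ and $\phi_{\beta}$. Your sign analysis of $F=\alpha-\beta$ on $\partial([0,1]^{2})$ is a slightly different way of packaging the boundary-point count (the paper instead localizes over neighbourhoods of the values $0$ and $1$), but the substance is the same.
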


\begin{remark}
  Continuous piecewise smooth and smooth make no difference here as an
  injective reparametrization allows to turn the former into the latter.
  In practice however, these maps will come from evaluation of crocodile walks
  on the initial loop: they are typically continuous and piecewise
  smooth (they are constant on every other interval of some subdivision).
\end{remark}

\begin{proof}
  The fiber product $I_{\alpha}\fibertimes{\alpha}{\beta}I_{\beta}$ is a
  piecewise smooth compact $1$ dimensional manifold, with boundary living
  above $0$ and $1$ (for the projection
  $\alpha\circ\pi_{1}=\beta\circ\pi_{2}$) (see figure
  \ref{fig:pullbacklemma}). If $0$ is a regular value of $\beta$ for
  instance, then the pre-image in
  $I_{\alpha}\fibertimes{\alpha}{\beta}I_{\beta}$ of a small
  neighbourhood $[0,\epsilon)\subset[0,1]$ of $0$ by $\beta\circ\pi_{2}$
  is homeomorphic to the pre-image in $I_{\alpha}$ of some $[0,\epsilon')$
  by $\alpha$, and has therefore exactly one component that has boundary
  above $0$.
%
%  (\textcolor{red}{or: the intersection $I_{\alpha}
%    \fibertimes{\alpha}{\beta} I_{\beta}\cap[0,1]\times [0,\epsilon)$ is
%    homeomorphic to $\Gamma_\alpha\cap[0,1]\times [0,\epsilon')$, and
%    since this latter has only $(0,0)$ as a boundary point we deduce that
%    $I_{\alpha}\fibertimes{\alpha}{\beta}I_{\beta}$ has exactly one
%    component etc.}).
%

  The same holds above $1$, and by compactness we conclude that exactly one
  component $J$ of $I_{\alpha}\fibertimes{\alpha}{\beta}I_{\beta}$ has
  boundary, and that this boundary has one point above $0$ and the other
  one above $1$. Choosing a parametrization $[0,1]\xto{\theta}J\subset
  I_{\alpha}\fibertimes{\alpha}{\beta}I_{\beta}$ of this component, we
  obtain maps $\phi_{\beta}=\pi_{1}\circ\theta$ to $I_{\alpha}$ and
  $\phi_{\alpha}=\pi_{2}\circ\theta$ to $I_{\beta}$, that send $0$ to $0$
  and $1$ to $1$, and make the diagram \eqref{eq:fiber product of
    intervals} commutative.
\end{proof}

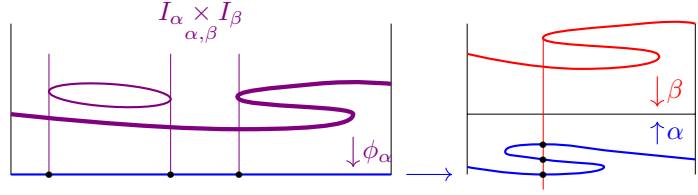
\begin{figure}
  \centering
  \begin{tikzpicture}[scale=.1]
    \tikzset{
      a/.style={red,thick},
      b/.style={blue,thick},
      ab/.style={violet,thick},
    }
    \begin{scope}[shift={(-30,0)}]
      \draw[very thin](0,20)--(0,0)--(50,0)--(50,20);
      \draw[ab,ultra thick] plot[smooth, tension=1] coordinates{
        (00,8)
        (30,6)
        (45,8)
        (30,10)
        (40,12)
        (50,12)
      };
      \draw[ab] plot[smooth cycle, tension=1] coordinates{
        (05,11)
        (13,9)
        (21,10)
        (13,12)
      };

      \node[ab] at(25,20){$I_{\alpha}\fibertimes{\alpha}{\beta}I_{\beta}$};
      \draw[b](0,0)--(50,0);
%      \draw[b,line width=2pt](05,0)--(21,0);
      \draw[ab,very thin](05,0)--+(0,16)(21,0)--+(0,16)(30,0)--+(0,16);
      \node at(05,0){\tiny$\bullet$};
      \node at(21,0){\tiny$\bullet$};
      \node at(30,0){\tiny$\bullet$};

      \draw[ab,thin,->](45,5)--(45,1)node[pos=.5,right]{$\phi_{\alpha}$};
    \end{scope}

    \draw[b,thin,->](22,0)--(28,0);
    
    \begin{scope}[shift={(30,0)}]
      \draw[very thin](0,20)--(0,0)(0,8)--(30,8)(30,0)--(30,20);
      \draw[a,shift={(0,3)}] plot[smooth, tension=1,shift={(0,5)}] coordinates{
        (00,8)
        (15,6)
        (25,8)
        (10,10)
        (23,12)
        (30,12)
      };
      \draw[b] plot[smooth, tension=1] coordinates{
        (00,1)
        (10,0)
        (18,1)
        (10,2)
        (05,3)
        (10,4)
        (20,3)
        (30,2)
      };

      \draw[a,very thin](10,-2)--+(0,20);
      \node at(10,0){\tiny$\bullet$};
      \node at(10,2){\tiny$\bullet$};
      \node at(10,4){\tiny$\bullet$};
      \draw[a,->,thin](25,13)--(25,9)node[pos=.5,right]{$\beta$};
      \draw[b,->,thin](25,4)--(25,7)node[pos=.5,right]{$\alpha$};
    \end{scope}
  \end{tikzpicture}
  \caption{One component of fiber product
    $I_{\alpha}\fibertimes{\alpha}{\beta}I_{\beta}$ 
    maps with degree $1$ on $I_{\alpha}$.}
  \label{fig:pullbacklemma}
\end{figure}

\begin{lemma}\label{lem:reparmetrization}
  Consider a (generic) loop $\gamma$ in $M$, the path
  $\Gamma:[0,1]\to\cM(\gamma,M)$ associated to the crocodile walk along
  $\gamma$, and let $\alpha=\pi_{1}\circ\Gamma$ (i.e. $\alpha$ is the map
  from $[0,1]$ to $[0,1]$ such that the trajectory $\Gamma(t)$ starts
  at the the point $\gamma(\alpha(t))$).

  Let $\beta:[0,1]\to[0,1]$ be a map such that
  $\alpha$ and $\beta$ satisfy the assumptions in
  lemma \ref{lem:fiberproductofintervals}.

  Then, with the notation of lemma \ref{lem:fiberproductofintervals},
  $\Gamma\circ\phi_{\alpha}$ still extends a sequence of consecutive
  steps, that is the same as the original one, up to cancellation of
  consecutive opposite steps.
%  
%  Consider a path $\Gamma:[0,1]\to\cM(\gamma,M)$ that extends a sequence
%  of consecutive Morse steps, and such that
%  $\alpha=\pi_{1}\circ\Gamma:[0,1]\to[0,1]$ sends $0$ to $0$ and $1$ to
%  $1$ (typically, a crocodile walk). Consider a  map $\phi:[0,1]\to[0,1]$
%  sending $0$ to $0$ and $1$ to $1$.
%
%  Then $\Gamma\circ\phi$ is homotopic to path $\Gamma_{\phi}$ such that
%  \begin{itemize}
%  \item
%    $\pi_{1}\circ\Gamma\circ\phi = \pi_{1}\circ
%    \widetilde{\Gamma_{\phi}}$
%  \item
%    $\Gamma_{\phi}$ extends a sequence of consecutive paths, that is the
%    same as the initial one up to cancellation of consecutive opposite
%    steps.
%  \end{itemize}
\end{lemma}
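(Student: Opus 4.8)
The plan is to read off the effect of the reparametrization $\phi_{\alpha}$ directly from the step decomposition of the crocodile walk. Recall that $\Gamma$ subdivides $I_{\alpha}=[0,1]$ into an alternating sequence of intervals $\ell_1,h_1,\ell_2,\dots,\ell_N$, where each lower step $\ell_i$ is (up to a fixed trajectory) the Morse step $\sigma_i$ and where, as observed in \ref{sec:upcroco}, the attachment map $\alpha=\pi_1\circ\Gamma$ is \emph{constant} on $\ell_i$, while on each upper step $h_i$ the map $\alpha$ is non-constant. Since $\Gamma\circ\phi_{\alpha}$ is simply $\Gamma$ traversed according to the reparametrization $\phi_{\alpha}\colon[0,1]\to I_{\alpha}$ supplied by Lemma \ref{lem:fiberproductofintervals}, the whole point is to control where $\phi_{\alpha}$ turns around, i.e. where it fails to be monotone.

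First I would locate the turning points of $\phi_{\alpha}$. As in Lemma \ref{lem:fiberproductofintervals}, $\phi_{\alpha}$ is a parametrization of the projection $\pi_1$ restricted to the distinguished boundary component $J$ of $I_{\alpha}\fibertimes{\alpha}{\beta}I_{\beta}$, and it has degree $1$ onto $I_{\alpha}$ (Figure \ref{fig:pullbacklemma}). Along $J$ one has $\alpha(s)=\beta(t)$, so differentiating a smooth arc gives $\alpha'(s)\,\dot s=\beta'(t)\,\dot t$; at a turning point, where $\dot s=0$ but $\dot t\neq0$ (the velocity never vanishes on the $1$-manifold $J$), this forces $\beta'(t)=0$, so that $\alpha(s)=\beta(t)$ is a singular value of $\beta$. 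By the no-common-singular-value hypothesis it is then a regular value of $\alpha$, whence $\alpha'(s)\neq0$ there. Consequently every turning point of $\phi_{\alpha}$ lies in the interior of an upper step $h_j$, and never inside a lower step. In particular each lower step $\ell_i$ is always crossed monotonically and entirely by $\phi_{\alpha}$, so on every such crossing $\Gamma\circ\phi_{\alpha}$ restricts to a genuine (reparametrized) Morse step, contributing $\sigma_i$ when crossed in the increasing direction and $\bar\sigma_i$ when crossed in the decreasing direction.

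It then remains to organize the crossings into a sequence of consecutive steps and to perform the cancellations. On each maximal interval on which $\phi_{\alpha}$ is monotone it sweeps a consecutive run of lower steps with a fixed sign, and the upper-step arcs in between keep the successive lower steps consecutive. At a turning point, which sits inside some $h_j$, the monotone piece ending there has just crossed the adjacent lower step ($\ell_j$ at a maximum, $\ell_{j+1}$ at a minimum) and the next piece immediately recrosses that very same lower step in the opposite direction; this produces an adjacent pair $\sigma_j\bar\sigma_j$ (resp. $\bar\sigma_{j+1}\sigma_{j+1}$) in the resulting word. Hence $\Gamma\circ\phi_{\alpha}$ does extend a sequence of consecutive steps, and each turning point contributes exactly one cancellable pair of consecutive opposite steps. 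Removing these pairs one at a time, innermost turning point first, is precisely free reduction; and because $\phi_{\alpha}$ sends $0$ to $0$ and $1$ to $1$ and has degree $1$, the net signed number of crossings of each $\ell_i$ is $+1$, so the fully reduced word is exactly the original $\sigma_1\cdots\sigma_N$. This is the asserted conclusion.

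The main obstacle I expect is not the turning-point computation but the bookkeeping needed to make ``extends a sequence of consecutive steps'' hold literally for the backtracking path $\Gamma\circ\phi_{\alpha}$: one must check that the alternation of lower and (possibly subdivided or retraced) upper steps still fits the structural definition, that consecutivity survives a retraced upper arc — which returns to its starting configuration — and that the combinatorial reduction by $\sigma\bar\sigma=1$ matches the relation already imposed in $\MSLoops(\Xi,\star)$. The transversality input, namely that $\alpha$ and $\beta$ share no singular value so that $J$ is a genuine piecewise-smooth $1$-manifold with isolated turning points, is exactly what makes the turning-point dichotomy and hence the whole argument well defined.
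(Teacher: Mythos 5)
Your proof is correct, and while the first half runs parallel to the paper's, the second half takes a genuinely different route. For the first half (lower steps are traversed monotonically, so $\Gamma\circ\phi_{\alpha}$ is still an extension of a sequence of consecutive steps), the paper simply asserts this ``by construction of $\phi_{\alpha}$'', using that $\alpha$ is constant on lower steps; your differential argument --- a turning point of $\phi_{\alpha}$ forces $\beta'(t)=0$, hence the common value is a singular value of $\beta$, hence by hypothesis a regular value of $\alpha$, hence the turning point cannot sit in a lower step where $\alpha$ is constant --- is a more explicit justification of the same fact, and is a welcome clarification. For the second half, the paper does \emph{not} analyze the crossing word directly: it chooses a generic homotopy $\{\phi_{s}\}$ from $\phi_{\alpha}$ to the identity, arranges that the times $t_{i}$ are regular values except at finitely many bifurcation parameters, and observes that the associated sequence of Morse steps is locally constant and changes only by insertion or deletion of a cancelling pair at each bifurcation. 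You instead read off the crossing word of $\phi_{\alpha}$ against the lower-step intervals and freely reduce it. Both are valid; the paper's homotopy argument is what makes the remark following the lemma (extension to arbitrary reparametrizations) essentially free, while yours is more self-contained and avoids choosing a generic homotopy.

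One imprecision worth fixing: it is not true that \emph{each} turning point contributes a cancellable pair $\sigma_{j}\bar\sigma_{j}$. If a local maximum of $\phi_{\alpha}$ inside an upper step $h_{j}$ is immediately followed by a local minimum inside the same $h_{j}$, with no lower-step crossing in between, this pair of turning points contributes nothing to the word, so ``removing pairs innermost turning point first'' is not literally free reduction of the word. This does not damage your conclusion: you have already shown that consecutive letters of the word are crossings of identical or adjacent lower steps, so the word is a walk on a line graph from the initial vertex to the final one, and any such walk freely reduces to the unique reduced path $\sigma_{1}\cdots\sigma_{N}$ (this, rather than the signed crossing count, is the cleanest way to finish). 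With that adjustment the argument is complete.
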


\begin{remark}
  For a general reparametrization $\phi$, notice that  $\Gamma\circ\phi$
  may fail to genuinely be an extension of a sequence of steps, as it may
  make a U-turn while in the middle of a (lower) step for instance. Nevertheless,
  $\pi_{1}\circ\Gamma$ is constant when $\Gamma$ is in a lower step, so
  that $\Gamma\circ\phi$ can be deformed to either avoid this step all at
  once or go through it injectively, while keeping the projection
  $\pi_{1}\circ\Gamma\circ\phi$ unchanged.

  The above statement then holds for general reparametrizations, up to
  these slight deformations.
\end{remark}

\begin{proof}
  To fix notations, let $0=t_{0}<\dots <t_{2N}=1$ be the times at which
  $\Gamma$ switches from a low/upper step to an upper/lower step of the
  crocodile walk: $\Gamma$ is going through a lower step on
  $[t_{2i},t_{2i+1}]$ and through an upper step on $[t_{2i+1},t_{2i+2}]$.

  Observe that $\alpha$ is constant on an interval $[t_{2i},t_{2i+1}]$ on
  which $\Gamma$ is going through a lower step. By construction of
  $\phi_{\alpha}$, this means that whenever $\phi_{\alpha}$ enters
  $[t_{2i},t_{2i+1}]$ through one end, it goes through it injectively up
  to the other end. In other words, whenever $\Gamma\circ\phi_{\alpha}$ enters
  a lower step, it is travelled injectively up to the other side:
  %
%  \textcolor{red}{(j'ai compris en regardant la tête du produit fibré, il
%    y a une manière de le voir plus naturellement ?)}
  %
  in particular, this
  implies that the path $\Gamma\circ\phi_{\alpha}$ is indeed an extension
  of a sequence of consecutive Morse steps.

  Moreover, $\phi_{\alpha}$ sends $0$ to $0$ and $1$ to $1$, so that it
  is homotopic to the identity: we can pick a generic homotopy
  $\{\phi_{s}\}$ for which, except for a finite number of values
  $\{s_{1},\dots,s_{k}\}$ of $s$, all the times $t_{i}$ are regular
  values of $\phi_{s}$, and at a bifurcation time $s_{i}$, exactly one
  critical value, associated to exactly one non degenerate critical point
  does cross a time $t_{j}$.

  Consider a parameter $s\notin\{s_{1},\dots,s_{k}\}$. Then each time
  $\phi_{s}$ enters an interval $[t_{2i},t_{2i+1}]$, it does either exit
  it through the same end or through the other one, and in both cases,
  $\phi_{s}$ can be deformed to either be constant at the relevant end or
  to travel through the interval injectively, without modifying the
  projection $\pi_{1}(\Gamma\circ\phi_{s})$. After this modification,
  $\Gamma\circ\phi_{s}$ is an extension of a sequence of consecutive
  steps.
% 
%  \textcolor{red}{Pourquoi opposite ? La modification fait juste de
%  $\phi_{s}$ une extension au sens de la définition 6.1. non ? }
%
  
  Moreover, this sequence only depends on the direction in which
  $\phi_{s}$ successively crosses the set $\{t_{0},\dots,t_{2N}\}$, and
  hence the sequence of Morse steps is locally constant away from the
  bifurcations $\{s_{1},\dots,s_{k}\}$.

  Finally, when crossing a bifurcation parameter $s_{i}$, the sequence of
  Morse steps is modified by either deleting or inserting a pair of
  opposite Morse steps. As a consequence, the initial and last sequences are
  the same up to cancellation of consecutive opposite Morse steps.
\end{proof}
 
\subsection{Homotopy lemmas}

\begin{lemma}\label{lem:homotopy reg down}
  Let $(\gamma_{s})_{s\in[0,1]}$ be a continuous family of based loops
  in $M$ such that for each $s$, $\gamma_{s}$ is transversal to all
  the evaluation maps $\cM(M,M)\xto{\ev_{-}}M$. Then
  $\Theta(\gamma_{0})=\Theta(\gamma_{1})$.
\end{lemma}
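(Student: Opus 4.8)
The plan is to run the crocodile walk through the entire homotopy and track how the Morse loop $\Theta(\gamma_s)$ changes as $s$ goes from $0$ to $1$, showing it changes only by moves that are trivial in $\MSLoops(\Xi,\star)$. First I would reduce to a generic one-parameter family. Slice-wise transversality of $\gamma_s$ to $\ev_-$ on $\cM(M,M)$ is an open condition and, read along the boundary strata of the manifold-with-corners $\cM(M,M)$, it entails transversality to each $\cM(M,x_{+})$, so the crocodile walk is defined for every $s$. Hence a small perturbation of $(\gamma_s)$ rel the endpoints $s=0,1$ makes the family generic while preserving admissibility of every slice and leaving $\gamma_0,\gamma_1$ — and therefore $\Theta(\gamma_0),\Theta(\gamma_1)$ — untouched. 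For such a family the parametrized space $\widehat{\cM}=\{(s,\tau,u):\gamma_s(\tau)=\ev_-(u)\}$ is a compact $3$-manifold with corners fibered over $[0,1]$, whose fibers are the $\cM(\gamma_s,M)$; the zero-length trajectory at $\star$ traces a section, and the boundary component through it — along which the walk runs — sweeps out a surface in $\partial\widehat{\cM}$.

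Next I would establish local constancy. The Morse steps are components of the $s$-independent moduli spaces $\cM(y,M)$ and $\cM(\star,M)$, so $\Theta(\gamma_s)$ is a word in a \emph{fixed} alphabet. Between bifurcation values, an Ehresmann-type argument for manifolds with corners shows that $\cM(\gamma_s,M)$ varies by an ambient isotopy preserving the corner stratification; the combinatorial crocodile walk (its cyclic sequence of corners joined by alternating upper/lower gluings) is carried along unchanged, so the read-off word $\Theta(\gamma_s)$ is locally constant in $\MSLoops(\Xi,\star)$. It remains to examine the finitely many parameters $s_1<\dots<s_k$ where genericity of the family fails.

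The crux is then the classification of these codimension-one events and the check that each fixes $\Theta$ in $\MSLoops(\Xi,\star)$. The model event touching the lower part of the walk is a fold tangency of the swept surface $\{(s,\gamma_s(\tau))\}$ with a wall $\ev_-(\cM(M,y))$ for an index $1$ critical point $y$ (these walls have codimension $1$ in $M$): across such an $s_i$ a pair of trajectories from $\gamma_s$ to $y$, i.e.\ a pair of lower steps through the same $y$, is born or dies. The two members differ by a U-turn of the attachment point and are mutually inverse steps $\sigma,\bar\sigma$, so $\Theta(\gamma_s)$ changes only by insertion or deletion of a factor $\sigma\bar\sigma$, which is trivial in $\MSLoops(\Xi,\star)$. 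The remaining events — reconnections of the walk through its corner set $B$, and crossings of attachment points along $\gamma$ — must be shown to leave the sequence of lower steps unchanged or again to insert/delete a canceling pair; here lemma \ref{lem:reparmetrization} supplies the governing principle that such a sequence is well defined only up to cancellation of consecutive opposite steps, which is exactly the relation imposed in $\MSLoops(\Xi,\star)$. Combining local constancy with invariance across each $s_i$ yields $\Theta(\gamma_0)=\Theta(\gamma_1)$.

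The hard part, as usual in this kind of argument, is the bifurcation analysis: arranging the parametrized transversality so that the degenerations of $\widehat{\cM}$ are of standard fold type, enumerating \emph{all} codimension-one strata of the non-generic locus — not only the birth/death tangencies but also those touching the upper steps and the corners $B$ — and verifying case by case that none alters $\Theta$ beyond an allowed $\sigma\bar\sigma$ move. Keeping the base point fixed throughout, so that the walk stays anchored at $\{\star\}$, is what upgrades the conclusion from a conjugacy to a genuine identity in $\MSLoops(\Xi,\star)$.
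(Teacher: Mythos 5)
Your local-constancy paragraph \emph{is} the paper's entire proof: under the hypothesis that every slice $\gamma_{s}$ is transversal to all the relevant evaluation maps, the moduli spaces $\cM(\gamma_{s},M)$ and their boundary/corner structure vary continuously with $s$, and since the Morse steps form a discrete set (they are components of the $s$-independent spaces $\cM(y,M)$ and $\cM(\star,M)$), the word $\Theta(\gamma_{s})$ is locally constant, hence constant. Everything you build after that — the perturbation of the family rel endpoints, the parametrized space $\widehat{\cM}$, the classification of codimension-one events, the fold tangencies with the walls $\ev_{-}(\cM(M,y))$, the birth/death of pairs $\sigma\bar\sigma$ — is vacuous here, because those events occur precisely at parameters where $\gamma_{s}$ \emph{fails} to be transversal to some $\ev_{-}$, and the hypothesis excludes every such parameter. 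You have in effect confused this lemma with the global argument in the proof of Theorem \ref{thm:pi1}, where a generic homotopy does have finitely many bad parameters for the downward walk; but there those parameters are crossed using the \emph{upward} walk (Lemma \ref{lem:homotopy reg up}) rather than by a fold analysis, and the present lemma is only ever invoked on the intervals between them.

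Two smaller points. First, your claim that transversality to $\ev_{-}$ on the top stratum $\cM(M,M)$ ``entails'' transversality to its restrictions to the boundary strata $\cM(M,x_{+})$ is false in general; the lemma's hypothesis (``all the evaluation maps'') should be read as imposing transversality to each of these maps separately, as in the setup of Section \ref{sec:crocodilewalks}. Second, your argument only delivers $\Theta(\gamma_{0})=\Theta(\gamma_{1})$ up to insertion and deletion of canceling pairs, i.e.\ equality in $\MSLoops(\Xi,\star)$; the paper's proof gives the literal equality of the sequences of steps, which is the statement actually used. Neither issue is fatal for the application, but both disappear once you drop the superfluous bifurcation machinery and keep only the continuity-plus-discreteness observation.
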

\begin{proof}
  Since all the relevant maps remain transversal, the moduli space
  $\cM(\gamma_{s},M)$ and its boundary depends continuously on $s$. Since
  the set of Morse steps is discrete, we conclude that
  $\Theta(\gamma_{s})$ is constant with respect to $s$.
\end{proof}

\begin{lemma}\label{lem:homotopy reg up}
  Let $(\gamma_{s})_{s\in[0,1]}$ be a continuous family of based loops in
  $M$ such that the two ends $\gamma_{0}$ and $\gamma_{1}$ are
  transversal to $\cM(M,M)\xto{\ev_{-}}M$. Suppose moreover that for each
  $s$, $\gamma_{s}$ is transversal to the evaluation map
  $\cM(M,M)\xto{\ev_{+}}M$. Then $\Theta(\gamma_{0})=\Theta(\gamma_{1})$
  up to relations.
\end{lemma}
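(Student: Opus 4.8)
The plan is to mimic the proof of Lemma \ref{lem:homotopy reg down} by tracking the downward crocodile walk $\Theta(\gamma_s)$ along the homotopy, the difference being that transversality to $\ev_-$ on $\cM(M,M)$ is now allowed to fail in the interior of $[0,1]$. Each such failure will be shown to alter $\Theta(\gamma_s)$ only by a relation of type $(2)$, i.e. by the bottom part of a (line of) matching patches, which is why the conclusion is only an equality in $\MSLoops(\Xi,\star)/\cR$.

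First I would reduce to a generic situation. Viewing the homotopy as a map $\Gamma\colon[0,1]_s\times[0,1]_t\to M$, $\Gamma(s,t)=\gamma_s(t)$, I would perturb $\Gamma$ rel the two edges $\{s=0\}$ and $\{s=1\}$ so that it becomes transverse to $\ev_-$ as a map of the full square, while keeping $\ev_-$-transversality at the two endpoints and, crucially, keeping transversality to $\ev_+$ on $\cM(M,M)$ throughout: the latter is an open condition, hence survives a small perturbation. Since the endpoints are left untouched, $\Theta(\gamma_0)$ and $\Theta(\gamma_1)$ are unchanged, so it suffices to treat this generic $\Gamma$. For it, the slice $\gamma_s$ fails to be transverse to $\ev_-$ only at finitely many parameters $s_1<\dots<s_m$ in $(0,1)$, at each of which $\gamma_{s_i}$ is tangent to $\ev_-$ at a single trajectory $u_0$. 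On each complementary interval $\Theta(\gamma_s)$ is defined and, by Lemma \ref{lem:homotopy reg down}, constant; so it remains to show that the jump of $\Theta$ across each $s_i$ lies in $\cR$.

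The heart of the argument is the local analysis at a wall $s_i$. As $s$ crosses $s_i$ the two-dimensional moduli space $\cM(\gamma_s,M)$ undergoes an elementary surgery along $u_0$ (birth or death of a handle), reorganizing the boundary component that carries the crocodile walk. If the surgery misses that component, $\Theta$ is unchanged; if it merely inserts or deletes a back-and-forth, the jump is a product $\sigma\bar\sigma$, trivial in $\MSLoops(\Xi,\star)$. The genuine jumps are where index-$n$ relations enter: following $u_0$ \emph{upward} -- possible exactly because $\gamma_s$ stays transverse to $\ev_+$ on $\cM(M,M)$, so that the upward crocodile walk of \ref{sec:upcroco} is available along the whole homotopy -- attaches to $u_0$ an ascending trajectory to an index-$n$ critical point $a$ (or to $\ostar$). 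Performing a bounce at $u_0$ relocates the local modification to a path in $\bM(a,M)$, exactly as in the birth/death discussion relocating a relation from an index-$2$ to an index-$n$ point. Running this relocation over a one-parameter neighbourhood of $s_i$ produces a cyclic family of paths in $\bM(a,M)$, i.e. a relation patch as in Definition \ref{def:patch}, and, when several ascending trajectories intervene, a line of matching patches as in Definition \ref{def:tree of matching patches}. I would then invoke the synchronization Lemmas \ref{lem:fiberproductofintervals} and \ref{lem:reparmetrization} to align the parametrization of the downward steps forming the bottom parts of these patches with the attachment points dictated by the upward structure, so that the bottom part of the resulting tree is precisely $w\,\bigl(\Theta(\gamma_{s_i^-})^{-1}\Theta(\gamma_{s_i^+})\bigr)\,w^{-1}$, up to cancellation of opposite steps, with $w$ a connecting Morse path from $\star$. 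By Proposition \ref{prop:evaltree} this bottom part is a relation of type $(2)$, so the jump across $s_i$ lies in $\cR$; summing over $i$ yields $\Theta(\gamma_0)=\Theta(\gamma_1)$ in $\MSLoops(\Xi,\star)/\cR$.

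The main obstacle is exactly this local classification at a wall: showing that every elementary $\ev_-$-degeneration which genuinely changes the crocodile walk can be organized, via the $\ev_+$-controlled upward trajectories, into an honest (tree of) relation patch(es) rooted at index-$n$ points, and that the bottom part so recovered matches the jump of $\Theta$ on the nose (up to $\sigma\bar\sigma$). This is where transversality to $\ev_+$ throughout is indispensable -- it is what guarantees that the index-$n$ roots and the patches attached to $u_0$ persist and vary continuously across the wall -- and it is precisely the reason the conclusion is only ``up to relations'' rather than the strict equality obtained in Lemma \ref{lem:homotopy reg down}.
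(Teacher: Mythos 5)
Your overall strategy is genuinely different from the paper's, and the difference is not cosmetic: you decompose the homotopy into walls where $\ev_-$-transversality fails and try to show that each individual jump of $\Theta$ is a relation, whereas the paper never performs any wall-crossing analysis at all. Its proof is global: since every $\gamma_s$ is transverse to $\ev_+$, the \emph{upward} crocodile walk is literally the same sequence $\tau_1\cdots\tau_k$ for all $s$; after moving its base point to $\ostar$ and slightly modifying the first and last downward steps, the downward and upward walks at $\epsilon=0$ and $\epsilon=1$ are synchronized via Lemmas \ref{lem:fiberproductofintervals} and \ref{lem:reparmetrization} so that at every time the pair of configurations is a bouncing trajectory. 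The time intervals on which the common upward walk performs a lower step through an index-$n$ point $a_i$ then directly furnish pairs of paths $(\xi_{0,i},\CrocoDo_0|_{[t_{2i},t_{2i+1}]})$ and $(\xi_{1,i},\CrocoDo_1|_{[t_{2i},t_{2i+1}]})^{-1}$ in $\bM(a_i,M)$ forming a line of matching patches whose bottom part is $\Theta(\gamma_0)\Theta(\gamma_1)^{-1}$ up to conjugation and cancellation. The homotopy parameter plays no role beyond guaranteeing that the $a_i$, $\phi'_i$, $\phi_{i+1}$ are independent of $\epsilon$.

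The problem with your route is that its central step is missing, and you say so yourself: the ``local classification at a wall'' is flagged as the main obstacle but never carried out, and the mechanism you propose for it is not well-founded in the stable setting. There is no canonical way to ``follow $u_0$ upward'' to an index-$n$ critical point: flowing up from a generic point of $M\times\R^{N_-}$ escapes to infinity in the $\R^{N_+}$ factor, and the only meaningful upward trajectories are those of $\cM(M,\cdot)$, which are organized by the upward crocodile walk along the \emph{whole} loop, not by local data near the tangency trajectory $u_0$. Relatedly, a tangency of $\gamma_{s_i}$ with $\ev_-(\cM(M,M))$ or $\ev_-(\cM(M,y))$ can reorganize the boundary component of $\cM(\gamma_s,M)$ carrying the walk globally (components merging or splitting), so the resulting jump of $\Theta$ need not be localized near $u_0$, and there is no control on the topology of the spaces $\cM(x,M)$ that would let you classify these bifurcations as in the genuine Morse case --- this loss of control is precisely why the paper replaces local arguments by the global patch construction. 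So as written the proof does not go through; the honest way to rescue your wall decomposition would be to apply the paper's global argument to each short sub-homotopy $[s_i-\delta,s_i+\delta]$, which makes the decomposition superfluous.
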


\begin{proof}
  Observe that our assumption essentially means that the moduli spaces
  defining the upward crocodile walk on $\gamma_{s}$ are always well cut
  out transversely, and hence that the upward crocodile walk is constant.
  However, the loops $\gamma_{s}$ are based at $\star$, and to avoid
  transversality/combinatorial discussions, we want to use the upward
  crocodile walk on loops based at $\ostar$ (see discussion before
  definition \ref{def:costep}).

  To this end, consider the path $\gamma_{s}'=\delta^{-1}\gamma_{s}\delta$,
  where $\delta$ is a path from $\star$ to $\ostar$ along which no
  transversality issue occurs.

  Recall from section \ref{sec:crocodilewalks} that the crocodile walk along
  $\gamma_{\epsilon}$ ($\epsilon\in\{0,1\}$) gives rise to a sequence of paths associated to
  alternating upper and lower gluings
  $$
  \begin{tikzcd}[start anchor=east,end anchor=west]
    \llap{$\{\star\}=\,$}
    u_{\epsilon,1}\ar[r,bend right=20,"\sigma_{\epsilon,1}",pos=.7]&
    u_{\epsilon,2}\ar[r,bend left =20,"h_{\epsilon,1}"]&
    u_{\epsilon,3}\ar[r,bend right=20,"\sigma_{\epsilon,2}"]&
    u_{\epsilon,4}\ar[r,phantom,"\dots"]&
    u_{\epsilon,2l-1}\ar[r,bend right =20,"\sigma_{\epsilon,l}",pos=.8]&
    u_{\epsilon,2l}\rlap{$\,=\{\star\}$}
  \end{tikzcd}
  $$

  Similarly, the upward crocodile walk along $\gamma'_{s}$ gives rise to a
  sequence
  $$
  \begin{tikzcd}[start anchor=east,end anchor=west]
    \llap{$\{\ostar\}=\,$}
    v_{s,1}\ar[r,bend left=20,"\tau_{1}",pos=.5]&
    v_{s,2}\ar[r,bend right =20,"g_{s,1}"]&
    v_{s,3}\ar[r,bend left=20,"\tau_{2}"]&
    v_{s,4}\ar[r,phantom,"\dots"]&
    v_{s,2k-1}\ar[r,bend left =20,"\tau_{k}",pos=.5]&
    v_{s,2k}\rlap{$\,=\{\ostar\}$}
  \end{tikzcd}
  $$
  
  Notice that our transversality assumption ensures that the steps
  $\tau_{1},\dots,\tau_{k}$ do not depend on $s$, and that we can pick
  parametrizations of the whole path such that the upper steps are
  synchronous for all $s$.

  The fact that the downward and upward walks above do not start and end
  at the same points is intentional to avoid artificial transversality
  issues. However, for them to still evaluate on the same topological
  loop in $M$, we now introduce a slight modification of the
  downward walks.
  
  We first fix some notation: the step $\sigma_{\epsilon,1}$ goes from
  the zero length trajectory $u_{\epsilon,1}=(\star,\star,\star)$ to
  $$
  u_{\epsilon,2} = (\star,\beta'_{\epsilon,1},\alpha_{\epsilon,2})
  \in \{\star\}\times\cM(\star,x_{\epsilon,1})\times\cM(x_{\epsilon,1},M),
  $$
  and $h_{\epsilon,1}$ from
  $u_{\epsilon,2}=(\star,\beta'_{\epsilon,1},\alpha_{\epsilon,2})$ to
  $u_{\epsilon,3}=(\gamma_{\epsilon,2},\beta_{\epsilon,2},\alpha_{\epsilon,2})$.
  
  Since no transversality issue is supposed to occur along $\delta$,
  the trajectory $\beta'_{\epsilon,1}$ can be tracked when $\star$ is
  moved from $\star$ to $\ostar$ along $\delta$: this defines a
  $1$-parameter family of trajectories in $\cM(\delta,x_{\epsilon,1})$,
  running from $\beta'_{\epsilon,1}$ to a trajectory
  $\tilde{\beta'}_{\epsilon,1}\in\cM(\ostar,x_{\epsilon,1})$.
  
  Concatenating these trajectories with $\alpha_{\epsilon,2}$, and
  reversing the orientation, we obtain a path $\tilde{h}_{\epsilon,1}$ in
  $\cM(\delta,M)$ from
  $\tilde{u}_{\epsilon,2}=(\ostar,\tilde{\beta'}_{\epsilon,1},\alpha_{\epsilon,2})$ to
  $u_{\epsilon,2}=(\star,\beta'_{\epsilon,1},\alpha_{\epsilon,2})$.

  Doing the same thing at the end of the walk, we finally obtain the
  following modified sequences
  $$
  \begin{tikzcd}[start anchor=east,end anchor=west]
    \tilde{u}_{\epsilon,2}\ar[r,bend left=20,
        "\tilde{h}_{\epsilon,1}"]&
    u_{\epsilon,2}\ar[r,bend left =20,"h_{\epsilon,1}"]&
    u_{\epsilon,3}\ar[r,bend right=20,"\sigma_{\epsilon,2}"]&
    u_{\epsilon,4}\ar[r,phantom,"\dots"]&
    u_{\epsilon,2l-1}\ar[r,bend left =20,
    "h_{\epsilon,l-1}\cdot \tilde{h}_{\epsilon,l-1}"]&
    \tilde{u}_{\epsilon,2l-1}
  \end{tikzcd}
  $$
  where
  $$
    \tilde{u}_{\epsilon,2}=(\ostar,\tilde{\beta'}_{\epsilon,1},\alpha_{\epsilon,2})
    \qquad\text{ and } \qquad
    \tilde{u}_{\epsilon,2l-1}=(\ostar,\tilde{\beta}_{\epsilon,2l-1},\alpha_{2l-2}).
  $$

  The net outcome of this discussion is that, up to a minor modification
  of the first and last steps of the original downward crocodile walks
  (i.e. associated to the base point $\star$), we obtain upward and
  downward walks along the same path $\gamma'_{\epsilon}$
  ($\epsilon\in\{0,1\}$), that define paths
  $$
  I^{\downarrow}_{\epsilon}=[0,1]\xto{\CrocoDo_{\epsilon}}\cM(\gamma'_{\epsilon},M)
  \quad\text{ and }\quad
  I^{\uparrow}_{\epsilon}  =[0,1]\xto{\CrocoUp_{\epsilon}}\cM(M,\gamma'_{\epsilon})
  $$
  and for a generic choice of data, the composed maps
  $$
  \begin{tikzcd}
    I^{\downarrow}_{\epsilon}
    \arrow[r,"\CrocoDo_{\epsilon}"]
    \arrow[rr,bend left=30, "\alpha_{\epsilon}"]
    &
    \cM(\gamma'_{\epsilon},M)\arrow[r,"\pi_{1}"]
    &
    {[0,1]}
  \end{tikzcd}
  \quad\text{ and }\quad
%  I^{\uparrow}  =[0,1]\xto{\CrocoUp}\cM(M,\gamma')\xto{\pi_{2}}[0,1]
   \begin{tikzcd}
     I^{\downarrow}_{\epsilon}
     \arrow[r,"\CrocoUp_{\epsilon}"]
     \arrow[rr,bend left=30, "\beta_{\epsilon}"]
    &
    \cM(M,\gamma'_{\epsilon})\arrow[r,"\pi_{2}"]
    &
    {[0,1]}
  \end{tikzcd}
  $$
  (where $\pi_{i}$ is the corresponding projection in the fiber product
  to the source of $\gamma'_{\epsilon}$) have no common singular values.

  Applying lemma \ref{lem:fiberproductofintervals}, we obtain maps
  $\phi_{\epsilon,\alpha}$ and $\phi_{\epsilon,\beta}$ sending $0$ to $0$
  and $1$ to $1$ making the following diagram commutative: 
  $$
  \begin{tikzcd}
    {[0,1]}\arrow[rr,"\phi_{\epsilon,\beta}"]\arrow[dd,"\phi_{\epsilon,\alpha}"']
    &&
    I^{\uparrow}_{\epsilon}\arrow[d,"\CrocoUp_{\epsilon}"]\\
    &&\cM(M,\gamma'_{\epsilon,})\arrow[d,"\pi_{2}"]\\
    I^{\downarrow}_{\epsilon}\arrow[r,"\CrocoDo_{\epsilon}"]
    &
    \cM(\gamma'_{\epsilon},M)\arrow[r,"\pi_{1}"]
    &
    {[0,1]}
  \end{tikzcd}
  $$
  
  Moreover, according to lemma \ref{lem:reparmetrization}, the lower part
  of $\Gamma^{\downarrow}_{\epsilon}\circ\phi_{\epsilon,\alpha}$ is the
  same as the lower part of  $\Gamma^{\downarrow}_{\epsilon}$ up to
  cancellation of consecutive opposite steps: in particular, they are
  the same in $\MSLoops(\Xi,\star)/\cR$ and we can replace $\CrocoDo_{\epsilon}$
  with $\Gamma_{\epsilon}^{\downarrow}\circ\phi_{\epsilon,\alpha}$.

  Replacing also $\CrocoUp_{\epsilon}$ with
  $\CrocoUp_{\epsilon}\circ\phi_{\epsilon,\beta}$, we end up as before
  with two upward and downward walks on $\gamma'_{\epsilon}$, with the
  additional property that they are now synchronized, i.e.:
  $$
  \forall t\in[0,1], \pi_{1}\circ\CrocoDo_{\epsilon}(t)
  = \pi_{2}\circ\CrocoUp_{\epsilon}(t).
  $$

  This means that at every time $t\in[0,1]$, the configuration
  $(\CrocoUp_{\epsilon}(t),\CrocoDo_{\epsilon}(t))$ can be seen as a
  bouncing trajectory: this defines a path $\Gamma^{\dag}_{\epsilon}$
  in $\bM(M,M)$.

  More precisely, consider an interval $[t_{2i},t_{2i+1}]$ on which
  $\CrocoUp_{\epsilon}$ is a lower step $g_{\epsilon,i}$. All the
  trajectories $\CrocoUp_{\epsilon}(t)$ in this interval are of the form
  $$
  g_{\epsilon,i}(t)=(\omega_{i},\xi_{\epsilon,i}(t))
  $$
  for a fixed trajectory $\omega_{i}\in\cM(M,a_{i})$ for some  index $n$
  critical point $a_{i}$, and where $\xi_{\epsilon,i}$ is path in
  $\cM(a_{i},M)$ from a boundary trajectory $(\phi'_{i},\chi_{i})$ to
  another one $(\phi_{i+1},\chi_{i+1})$. Here either
  $\phi'_{i}\in\cM(a_{i},b_{i})$ for some index $n-1$ critical point and
  $\chi_{\epsilon,i}\in\cM(b_{i},\gamma_{\epsilon})$, or
  $\phi'_{i}\in\cM(a_{i},\ostar)$, and $\chi_{\epsilon,i}$ is the
  constant trajectory at $\ostar$ (see figure \ref{fig:crocopatch}).

  \begin{figure}
    \centering
    \begin{tikzpicture}[z={(-.866,-.5)}]
      \tikzset{txt/.style={fill=white}}

      \coordinate(c00)at(-4,0, 1);
      \coordinate(c01)at( 4,0, 1);
      \coordinate(c10)at(-4,0,-1);
      \coordinate(c11)at( 4,0,-1);
      \coordinate(b0) at(-4,3,0);
      \coordinate(b1) at( 4,3,0);
      \coordinate(a)  at( 0,5,0);
      \coordinate(m0) at(-5,7,0);
      \coordinate(m1) at( 5,7,0);
      
      \draw[noEnd-noEnd,thick](-5,0,-1)--(5,0,-1);
      \begin{scope}
        \foreach \ds in  {-.8,-.6,...,.8}{
          \draw[noEnd-noEnd,very thin](-5,0,\ds)--(5,0,\ds);
        }

%        \draw[densely dashed](c00)\myto{AA}{out=rand*20,in=180+rand*20,relative}(c10);
        \draw[trajectory,dashed](b0)to[out control=+(  20:.75),in=90](c10);
%        \draw[trajectory,dashed](b0)to[out control=+(-100:.75),in=90](c10);

        \fill[white,opacity=.75]
        (a)to[out=-180,in=0]
        (b0) to[out control=+(-160:.75),in=90]
        (c00)--
        (c01)to[out control=+(90:0),in control=+(-160:.75)]
        (b1)to[out=180,in=-  0](a);

        \draw[noEnd-noEnd](m0)--(m1);
        \draw[trajectory]([shift={(0,2)}]a)--(a)node[pos=.5,left]{$\omega_{i}$};
        \draw[trajectory](a)to[out=-180,in=0]
        node[pos=.5,above left]{$\phi'_{i}$}(b0);
        \draw[trajectory](a)to[out=-  0,in=180]
        node[pos=.5,above right]{$\phi_{i+1}$}(b1);
        \draw[trajectory       ](b0)to[out control=+(-160:.75),in=90]
        node[pos=.5,left]{$\chi_{0,i}$}(c00);
        \draw[trajectory       ](b1)to[out control=+(-160:.75),in=90]
        node[pos=.5,right]{$\chi_{0,i+1}$}(c01);
        \draw[trajectory       ](b1)to[out control=+(  20:.75),in=90](c11);
%        \draw[trajectory       ](b1)to[out control=+(-100:.75),in=90](c11);

        \path(b0)\myto{AA}{out=-15,in=180}(0,2.5,.5)\myto{AA1}{out=0,in=-170}
        ([shift={(-.2,0)}]b1);
        \path(c00)\myto{BB}{}(0,0,1)\myto{BB1}{}(c01);

        \foreach \i in {1,2,...,9}{
          \draw[othertraj]%,rounded corners=10pt]
          (a)   to[out control=+(-180+\i/10*70:1.5),in control=+(10+\i/10*80:1.5)]
          (AA\i)to[out control=+( 195+\i/10*80:.5),in control=+(90:1)]
          (BB\i);
        }
        \foreach \i in {0,1,2,...,9}{
          \draw[othertraj,rounded corners=3pt]
          (a)to[out control=+(-110+\i/10*110:2-\i/10*.5),
          in control=+(80+\i/10*80:1.5+\i/10*1.5)]
          (AA1\i)to[out control=+(-100+\i/10*20:1.5-\i/10*1.5),
          in control=+(90:1+\i/10*2)]
          (BB1\i);
        }

        \path(m0)\myto{BB}{}(0,7,0);
        \path(m1)\myto{BB1}{}(0,7,0);
        \path([shift={(0,-2)}]m0)\myto{AA}{}([shift={(-.3,.3)}]a);
        \path([shift={(0,-2)}]m1)\myto{AA1}{}([shift={(.3,.3)}]a);
 
        \foreach \i in {1,2,...,9}{
          \draw[othertraj]
          (b0)   to[out control=+(120-\i/10*120:1.5),in control=+(-80-\i/10*80:1)]
          (AA\i)to[out control=+(100-\i/10*80:.5),in control=+(-90:1)]
          (BB\i);
        }
        \foreach \i in {1,2,...,9}{
          \draw[othertraj]
          (b1)   to[out control=+(60+\i/10*120:1.5),in control=+(-100+\i/10*80:1)]
          (AA1\i)to[out control=+(80+\i/10*80:.5),in control=+(-90:1)]
          (BB1\i);
        }

%        \draw[densely dashed](c01)\myto{AA}{out=rand*20,in=180+rand*20,relative}(c11);
%        \foreach \i in {1,2,...,9}{
%          \draw[othertraj,thin]
%          (b1)   to[out control=+(-160+\i/10*60:.75),in control=+(-80-\i/10*80:0)]
%          (AA\i);
%        }

        \path (a) node{\tiny$\bullet$} node[above left]{$a_{i}$};
        \path (b0)node{\tiny$\bullet$} node[above left]{$b_{i}$};
        \path (b1)node{\tiny$\bullet$} node[above right]{$b_{i+1}$};

        \draw[noEnd->,thick](-4,5.5)to[out=0,in=150]node[pos=.1,above]{$\tau_{i}$}(-2,5);
        \draw[-{noEnd}>,thick](2,5)to[out=30,in=180]node[pos=.9,above]{$\tau_{i+1}$}(4,5.5);
        \draw[->,thick](-2,3.5)to[out=-20,in=-160]
        node[pos=.5,above]{\tikzoutline{$g_{0,i}$}}
        node[pos=.5,below]{\tikzoutline{${\Gamma_{0}^{\uparrow}}([t_{2i},t_{2i+1}])$}}
        (2,3.5);

        \begin{scope}[
          decoration={
            segment length=45pt,
            amplitude=25pt},
          croco/start=continue,
          croco/end=midup,
          ]
          \clip($(c00)+(-1,-2.5)$)rectangle($(c01)+(1,.5)$);
          \draw[croco/start=midup,croco]($(c00)+(-1.5,0)$)--(c00);
          \draw[croco](c00)--(c01);
          \draw[croco](c01)--($(c01)+(1.5,0)$);
        \end{scope}
        \draw[noEnd-noEnd,thick](-5,0, 1)coordinate(deb)--(5,0, 1)coordinate(fin);

        \draw(5,0,1)node[right]{$\gamma_{0}$};
        \draw(5,0,-1)node[right]{$\gamma_{1}$};
        \draw(c00)node{\tiny$\bullet$};
%        node[below]{\tikzoutline{$\gamma_{0}(\alpha_{0}^{\uparrow}(t_{2i}))$}};
        \draw(c01)node{\tiny$\bullet$};
%        node[below]{\tikzoutline{$\gamma_{0}(\alpha_{0}^{\uparrow}(t_{2i+1}))$}};
%       
%        \draw(c01)node{\tiny$\bullet$}node[below]{$\gamma_{0}(\pi_{2}(t_{2i+1}))$};
%        \draw(c00)node{\tiny$\bullet$}node[below]{$\gamma_{0}(\pi_{2}(t_{2i}))$};

      \end{scope}
      
    \end{tikzpicture}    
    \caption{Upward and downward crocodile walks define matching patches.
    }
    \label{fig:crocopatch}
  \end{figure}
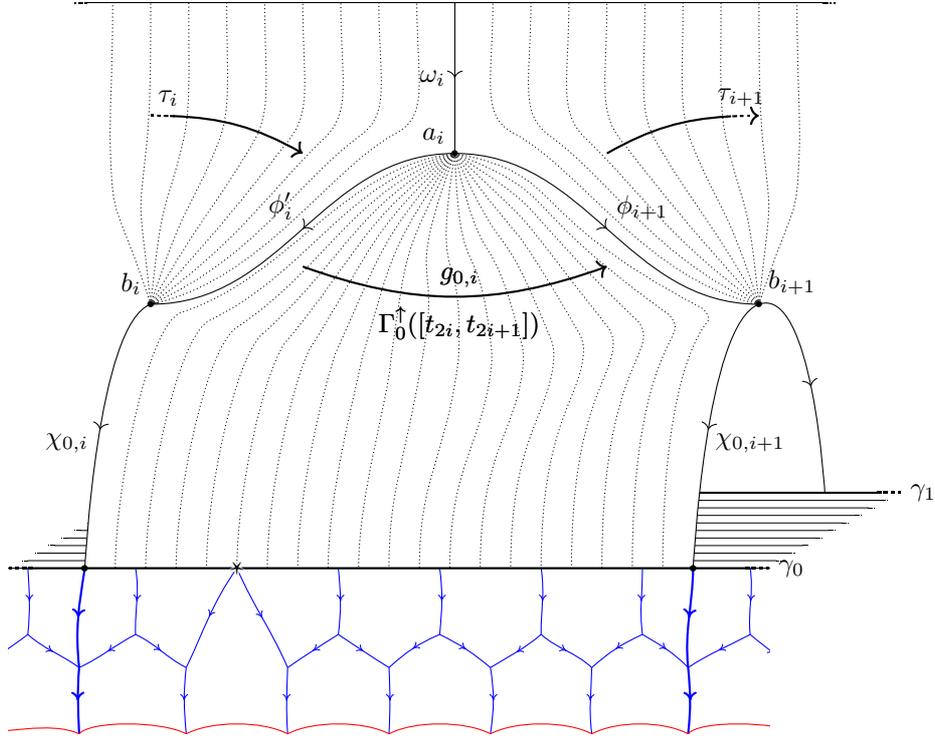

%\end{document}

  Notice that neither $a_{i}$, nor $\phi'_{i}$ nor $\phi_{i+1}$ depends
  on $\epsilon$. As a consequence, the paths $\lambda_{0} =
  (\xi_{0,i},{\CrocoDo_{0}}_{|_{[t_{2i},t_{2i+1}]}})$ and $\lambda_{1} =
   (\xi_{1,i},{\CrocoDo_{1}}_{|_{[t_{2i},t_{2i+1}]}})^{-1}$, seen as
  elements of $\bM(a_{i},M)$ form a relation patch.

  Such a patch has exactly $2$ open edges, associated to $\phi'_{i}$ and $\phi_{i+1}$.

  Consider now the first patch, associated to $a_{1}$, and in this patch,
  the ``first'' open edge, i.e. the open edge associated  $\phi'_{1}$.

  It is associated to the start of the upward walk and the start of the
  modified downward walk:
%  \textcolor{red}{(qui sont aussi par définition
%    du crocodile la fin des chemins respectifs, c'est bête mais je
%    n'avais pas compris tout de suite)}
  the part before the bounce reduces to the $0$ length trajectory at
  $\ostar$ and the part after the bounce is the trajectory
  $\tilde{u}_{\epsilon,2}$.
%  \textcolor{red}{(oui, car on parle de l'open
%    edge et pas de la trajectoire de $\bM(a_1,M)$)}.
  It consists of the
  two trajectories
  $$
  (\ostar, \tilde{u}_{0,2}) = (\ostar, \tilde{\beta'}_{0,1},
  \alpha_{0,2}))
  \qquad \text{ and }\qquad
  (\ostar, \tilde{u}_{1,2}) = (\ostar, \tilde{\beta'}_{1,1},
  \alpha_{1,2}))
  $$

  Recall that  $(\beta_{0,1},\alpha_{0,2})$ is one end of the step
  $\sigma_{0,1}$ rooted at $\star$ and whose other end is the zero length
  trajectory at $\star$. The same holds for $(\beta_{1,1},\alpha_{1,2})$,
  and in particular, we have
  $(\beta_{0,1},\alpha_{0,2})=(\beta_{1,1},\alpha_{1,2})$.

  This means that the two trajectories forming the open edge are in fact
  the same: this open edge is artificial and can be considered as
  non-existent.

  \medskip

  The same holds for the ``last'' open edge of the last patch. This edge
  is artificial and can be removed, so that in general, the last patch
  has only one open edge, unless there is only one patch, in which case
  the first and last ones are the same, and it has no open edge at all.
  %
%  \textcolor{red}{(je ne suis pas sûr
%    de comprendre la phrase : on dit que chaque nouveau patch a soit une
%    open edge en commun avec la précédente, soit zéro et alors on a
%    terminé ?)}
  %
  This means that our
  sequence of patches is a line of matching relations patches, according
  to definition \ref{def:tree of matching patches}.

\begin{figure}
  \centering
  \begin{tikzpicture}[%z={(-.866,-.5)},
    remember points/count=11,
    remember points/prefix=PP,
    decoration={amplitude=30pt,segment length=80pt},%
    croco/start=continue,%
    croco/end=midup,%
    croco/save nodes=true,%
    croco/save node prefix=AA,%
    croco/proba=.75,%
    ]
    \pgfmathsetseed{4}
    \begin{scope}
      \clip(-5.75,4)rectangle(3.2,-3.6);

      \coordinate(a) at (-1.5,3);
      \coordinate(m) at ($(a)+(0,3)$);
      \begin{pgfonlayer}{foreground}
        \path (a) node{\tiny$\bullet$}node[above]{\tikzoutline{$a$}};
      \end{pgfonlayer}
      \draw[croco/start=midup,croco,remember points]
      (-6,1)to[out=-20,in=180](0,0);
      \coordinate(D)at(ctD);
      \coordinate(C)at(ctC);
      \draw[croco,croco/verticality=.75,remember points/prefix=QQ,remember points]
      (0,0)to[out=15,in=-165](3.5,.75);
      
      \foreach \i in {1,...,9}{
        \draw[othertraj](a)to[out=-180+\i*15,in=70+\i*3](PP\i);
      }
      \foreach \i in {0,...,2}{
        \draw[othertraj](a)to[out=-180-\i*10,in=60-\i*3]($(PP1)+(-.5,\i/2+.5)$);
      }
      \foreach \i in {2,4,...,8}{
        \draw[othertraj](a)to[out=-30+\i*3,in=100-\i](QQ\i);
      }
      \foreach \i in {0,...,14}{
        \draw[othertraj](a)to[out=\i*10,in=180+\i*10-20]++(\i*10-10+10*\i/14:4);
      }

      \coordinate(s)at($(C)+(60:1.7)$);
      %% Star step
      \fill[white, opacity=.75]
        (0,0)--
        (s)to[out=-60,in=0]
        (D)to[relative, out=-10,in=180+5]
        ([xshift=2pt] C)to[relative, out=-10,in=180+5](0,0)--cycle;
      \draw[remember points,remember points/prefix=SS](0,0)--(s);
%      \foreach\i in{2,4,...,8}{
%        \draw[othertraj](SS\i)--(C);
%      }
      \draw[red,remember points](s)to[out=-60,in=0](D);
      \draw[thick,->-,blue](s)to[relative, out=-5,in=180-5](C);
      \foreach \i in {1,...,9}{
        \draw[othertraj](s)to[out=-120,in=180-9*\i](PP\i);
      }
        
      %% OStar step
%      \fill[white, opacity=.75]
%        (0,0)to[out=-30,in=0]
%        (D)to[relative, out=-10,in=180+5]
%        ([xshift=.75pt] C)to[relative, out=-10,in=180+10](0,0)--cycle;
%      \draw[red,remember points,save path=\toto](0,0)to[out=-30,in=0](D);
%      \foreach \i in {1,...,9}{
%        \draw[othertraj](0,0)to[bend right=15-\i](PP\i);
%      }

      \begin{pgfonlayer}{foreground}
        \node at (0,0){$\wostar$};
        \node at (s){$\wstar$};
        \node[above] at ($(s)!.5!(0,0)$){\tikzoutline{$\delta$}};
      \end{pgfonlayer}

      \draw[trajectory] (a)to[out=-30,in=100](0,0);
      \draw[-noEnd] (a)to[out=150,in=-30]+(-2,1);

      \draw[xshift=-5pt,<-,red](0.5,-3.5)to[out=140,in=-10](-3,-2.75);
      \draw[xshift= 5pt,->,red](0.5,-3.5)to[out=140,in=-170](2.5,-2.5);
    \end{scope}
    
  \end{tikzpicture}
  \caption{Rooting the Morse loop at $\star$.}
  \label{fig:backtostar}
\end{figure}
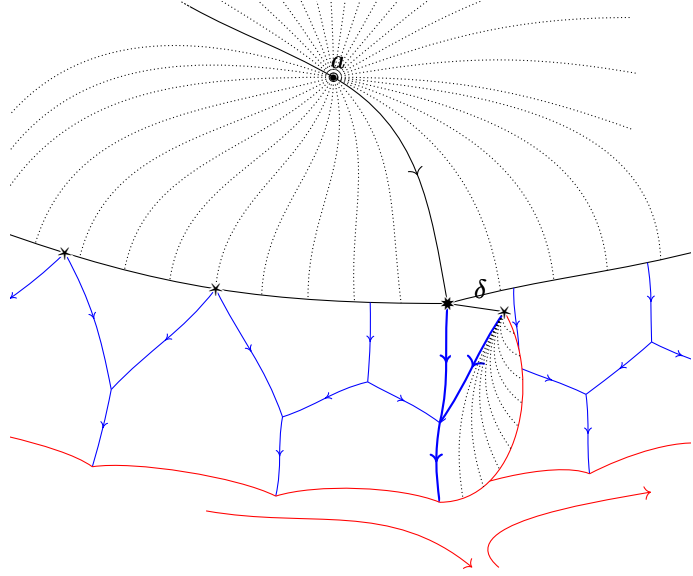

Let $\rho$ denote the (free) Morse loop forming the bottom part of our
matching patches. It starts with $\sigma_{0,2}$ and ends with
$\sigma_{1,2}^{-1}$.

It only remains to observe that the step $\sigma_{0,1}=\sigma_{1,1}$ runs
from the zero length trajectory $\star$ to
$(\beta'_{0,1},\alpha_{0,2})=(\beta'_{1,1},\alpha_{1,2})$. As a
consequence, $\rho$ can be conjugated by $\sigma_{0,1}$ (see figure \ref{fig:backtostar}):
$\sigma_{0,1}\cdot\rho\cdot\sigma_{0,1}^{-1}$ is a relation, and it is
also equal to $\Theta(\gamma_{0})\Theta(\gamma_{1})^{-1}$ up to
cancellation of consecutive opposite steps.
\end{proof}

\subsection{Proof of theorem \ref{thm:pi1}}

\begin{proof}[Proof of theorem \ref{thm:pi1}]
Consider a Morse loop $\sigma=\sigma_{1}\cdots\sigma_{k}$ and suppose
that its evaluation $\gamma=\ev(\sigma)$ in $M$ is trivial.

Since $\sigma=\Theta(\sigma)$ up to a relation, we can replace $\sigma$
by $\Theta(\sigma)$. We now have a topological loop $\gamma$ in $M$ that
is homotopically trivial, and such that the downward crocodile along
$\gamma$ is $\sigma$.

Consider an homotopy $\{\gamma_{s}\}_{s\in[0,1]}$ from $\gamma$ to the
constant loop at the base point. For a generic choice of this homotopy,
the loop $\gamma_{s}$ is transverse to all the relevant evaluation maps
required to define the downward crocodile walk, except at a finite number
of isolated values $0<s_{1}<\dots<s_{N}<1$. In the same way, $\gamma_{s}$
can be supposed to be transverse to all the relevant evaluation maps
required to define the upward crocodile walk, except for a finite set of
values $0<s'_{1}<\dots<s'_{N'}<1$. Finally, for a generic choice of the
homotopy, these two sets of values can be supposed to be disjoint:
\begin{equation}
  \label{eq:disjointcriticalsets}
  \{s_{i}\}_{1\leq i\leq N}\cap \{s'_{i}\}_{1\leq i\leq N'}= \varnothing.
\end{equation}

From lemma \ref{lem:homotopy reg down}, on the components of
$[0,1]\setminus\{s_{i}\}_{1\leq i\leq N}$, $\Theta(\gamma_{s})$ is
constant, and from lemma \ref{lem:homotopy reg up}, on the components of
$[0,1]\setminus\{s'_{i}\}_{1\leq i\leq N'}$, $\Theta(\gamma_{s})$ is
invariant up to relations. As a consequence, up to relations, we have
$\sigma=\Theta(\star)$, so that $\sigma$ is trivial up to relations.
  
\end{proof}

\section{Functoriality}

Consider now two closed finite dimensional manifolds $M$ and $N$ and a
continuous map $M\xto{\phi}N$.

Consider two sets of auxiliary data $\Xi_{M}=(f,X,\star)$ on $M$ and
$\Xi_{N}=(f',X',\star')$ on $N$, and a smooth perturbation $\tilde{\phi}$
of $\phi$ (still sending $\star$ to $\star'$). Define then the moduli
spaces of hybrid trajectories from a critical point $x$ of $f$ to a
critical point $x'$ of $f'$ as:
\begin{align*}
  \bMhyb[\phi](x,x')
  &= \cM(x,M)\fibertimes{\phi\circ\ev_{+}}{\ev_{-}}\cM(N,x')\\
  &= \{(u,v)\in\cM(x,M)\times\cM(N,x'), \phi(\ev_{+}(u))=\ev_{-}(v)\}.
\end{align*}
More generally, we call hybrid trajectories the elements of the fiber
product $\bMhyb[\phi](\cdot,\cdot)$ over $\phi\circ\ev_{+}$ and $\ev_{-}$
of all the moduli spaces of the form $\cM(\cdot,M)$ and $\cM(N,\cdot)$,
for all possible constraints at start and end.

\begin{figure}
  \centering
  \begin{tikzpicture}[
    relative, out=rand*10, in=180-rand*10,
    decoration={segment length=40pt, amplitude=20pt},
    wtrajectory/.style={
      white,
      line width=2.5pt,
      postaction={draw,#1,->-}}
    ]
    \draw[->](-.5,0) ..controls+(1,.5)and+(-1,.5)..node[pos=.5,above]{$\phi$} (4.5,0);
    \begin{scope}[shift={(-3,0)},x={(-.5cm,-.3cm)},y={(1cm,-.1cm)},z={(0cm,1cm)}]
      \draw[white,line width=4pt](0,4,0)--(0,4,3);
      \draw[thin](0,0,0)--(0,0,3)--(0,4,3)--(0,4,0)--cycle;
      \draw[thin](0,0,0)--(3,0,0)--(3,4,0)--(0,4,0)--cycle;
      \coordinate(o) at(0,0,0);
      \coordinate(oo)at(0,4,0);
      \coordinate(y) at(1.5,2,2.5);
      \coordinate(x1)at(1.5,1,1.5);
      \coordinate(x2)at(1.5,3,1.5);
      \coordinate(m1)at(1.5,1,0);
      \coordinate(m2)at(1.5,3,0);
      \coordinate(p1)at(0,1,0);
      \coordinate(p2)at(0,3,0);

      \draw[ultra thick](p1)--(p2);
      \draw[wtrajectory={blue,thick}](y) to(x1);
      \draw[wtrajectory={blue,thick}](x1)to(m1);
      \draw[wtrajectory={blue,thick}](y) to(x2);
      \draw[wtrajectory={blue,thick}](x2)to(m2);
      \draw(m1)to[out=20,in=180+10](m2);
      \draw[dashed](m1)--(p1);
      \draw[dashed](m2)--(p2);

      \foreach \i in{y,x1,x2}{
        \node at (\i){\tiny$\bullet$};
      }

      \node[below right] at (0,4,0){$M$};

    \end{scope}

    \begin{scope}[shift={(3,-.4)},x={(.5cm,-.3cm)},y={(1cm,.1cm)},z={(0cm,1cm)}]
      \draw[white,line width=4pt](0,0,0)--(0,0,3)(0,1,0)--(0,1,3);
      \draw[thin](0,0,0)--(3,0,0)--(3,4,0)--(0,4,0)--cycle;
      \draw[thin](0,0,0)--(0,0,3)--(0,4,3)--(0,4,0)--cycle;
      \coordinate(o) at(0,0,0);
      \coordinate(oo)at(0,4,0);
      \coordinate(y1)at(1,1.5,2);
      \coordinate(y2)at(1,2.5,2); 
      \coordinate(x1)at(1.5,1,1.25);
      \coordinate(x2)at(1.5,2,1.25);
      \coordinate(x3)at(1.5,3,1.25);
      \coordinate(m1)at(0,1,2.5);
      \coordinate(m2)at(0,3,2.5);
      \coordinate(p1)at(0,1,0);
      \coordinate(p2)at(0,3,0);
      \coordinate(n1)at(1.5,1,0);
      \coordinate(n2)at(1.5,2,0);
      \coordinate(n3)at(1.5,3,0);
%      \draw[blue,thick,trajectory](y) to(x1);
%      \draw[blue,thick,trajectory](x1)to(m1);
%      \draw[blue,thick,trajectory](y) to(x2);
%      \draw[blue,thick,trajectory](x2)to(m2);
      \draw[ultra thick](p1)--(p2);
      \draw[dashed](p1)--(m1);
      \draw[dashed](p2)--(m2);
      \draw[remember points, remember points/prefix=A,remember points/count=4,]
        (m1)to[out=20,in=180+10](m2);
      \draw[trajectory,red,thin](A0)to(x1);
      \draw[trajectory,red,thin](A1)to(y1);
      \draw[trajectory,red,thin](A2)to(y2);
      \draw[trajectory,red,thin](A3)to(x3);

      \draw[wtrajectory={red,thick}](y1)to(x1);
      \draw[wtrajectory={red,thick}](y1)to(x2);
      \draw[wtrajectory={red,thick}](y2)to(x2);
      \draw[wtrajectory={red,thick}](y2)to(x3);
      \draw[wtrajectory={red,thick}](x1)to(n1);
      \draw[wtrajectory={red,thick}](x2)to(n2);
      \draw[wtrajectory={red,thick}](x3)to(n3);

      \draw(n1)to(n2);
      \draw(n2)to(n3);

      \node[below left] at (0,0,0){$N$};
      
      \foreach \i in{y1,y2,x1,x2,x3}{
        \node at (\i){\tiny$\bullet$};
      }

    \end{scope}
    \end{tikzpicture}
  \caption{Direct map.}
  \label{fig:pushstep}
\end{figure}

We work under the transversality assumption that all these moduli spaces
are cutout transversely, and from classical transversality arguments,
this is a generic condition on $\Xi$, $\Xi'$, and $\tilde{\phi}$.

The composition
$$
\Theta_{\tilde\phi}=\Theta_{N,\Xi'}\circ\tilde{\phi}\circ\ev_{+}
$$
of the evaluation in $M$, $\tilde{\phi}$ and the crocodile walk in $N$, turns a
Morse loop in $M$ into a Morse loop in $N$, and can be entirely described
in terms of moduli spaces of hybrid trajectories.

\begin{theorem}
  In the above situation, the map $\Theta_{\tilde{\phi}}$ induces a group
  morphism $\phi_{*}:\pi_{1}(M,\Xi)\to \pi_{1}(N,\Xi')$, that does not
  depend on the choice of perturbation $\tilde{\phi}$.

  When $M=N$ and $\phi=\Id$, $\phi_{*}$ is an isomorphism, and we
  identify the groups associated to different Morse data through this
  isomorphism.

  Then we have the following properties:
  \begin{enumerate}
  \item 
    $(\phi\circ\psi)_{*}=\phi_{*}\circ\psi_{*}$,
  \item 
    $(\Id)_{*}=\Id$,
  \item 
    if $\phi$ and $\psi$ are homotopic as based maps, then
    $\phi_{*}=\psi_{*}$.
  \end{enumerate}
\end{theorem}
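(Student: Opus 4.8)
The plan is to reduce every assertion to the corresponding statement for the \emph{topological} fundamental group, using the evaluation isomorphisms of Theorem~\ref{thm:pi1} as a dictionary. Write $\ev_{M}\colon\cL(\Xi)\to\pi_{1}(M,\star)$ and $\ev_{N}\colon\cL(\Xi')\to\pi_{1}(N,\star')$ for the evaluation maps of \eqref{eq:evaluationofloops}; by Theorem~\ref{thm:pi1} they descend to isomorphisms $\pi_{1}(M,\Xi)=\cL(\Xi)/\cR\xrightarrow{\sim}\pi_{1}(M,\star)$ and $\pi_{1}(N,\Xi')=\cL(\Xi')/\cR'\xrightarrow{\sim}\pi_{1}(N,\star')$, whose kernels on $\cL$ are exactly $\cR$ and $\cR'$. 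The first step I would carry out is the key identity: for every Morse loop $\sigma\in\cL(\Xi)$,
\[
  \ev_{N}\bigl(\Theta_{\tilde\phi}(\sigma)\bigr)=\phi_{\#}\bigl(\ev_{M}(\sigma)\bigr)\in\pi_{1}(N,\star'),
\]
where $\phi_{\#}$ denotes the morphism induced by $\phi$ on topological fundamental groups. Indeed, writing $\ell_{\sigma}$ for the evaluation loop of $\sigma$ in $M$ (so $[\ell_{\sigma}]=\ev_{M}(\sigma)$), we have $\Theta_{\tilde\phi}(\sigma)=\Theta_{N,\Xi'}(\tilde\phi\circ\ell_{\sigma})$, and Proposition~\ref{prop:croco homotopy} applied in $N$ gives that $\ev(\Theta_{N,\Xi'}(\tilde\phi\circ\ell_{\sigma}))$ is based-homotopic to $\tilde\phi\circ\ell_{\sigma}$; since $\tilde\phi$ is a based perturbation of $\phi$, this loop represents $\phi_{\#}(\ev_{M}(\sigma))$.

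From this identity I would deduce well-definedness, the formula for $\phi_{*}$, and independence of $\tilde\phi$ \emph{simultaneously}, without ever checking that $\Theta_{\tilde\phi}$ is a homomorphism on $\cL$. Consider
\[
  \begin{tikzcd}
    \cL(\Xi)\arrow[r,"\Theta_{\tilde\phi}"]\arrow[d,"q_{M}"']&\cL(\Xi')\arrow[d,"q_{N}"]\\
    \pi_{1}(M,\Xi)\arrow[r,dashed,"\phi_{*}"]&\pi_{1}(N,\Xi')
  \end{tikzcd}
\]
with $q_{M},q_{N}$ the quotient maps. Post-composing the key identity with the isomorphism $\ev_{N}$ shows $\ev_{N}\circ q_{N}\circ\Theta_{\tilde\phi}=\phi_{\#}\circ\ev_{M}$ as maps $\cL(\Xi)\to\pi_{1}(N,\star')$; the right-hand side is a group homomorphism that kills $\cR$ (because $\ev_{M}$ does). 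As $\ev_{N}$ is injective on the quotient, $q_{N}\circ\Theta_{\tilde\phi}$ kills $\cR$, hence factors through $q_{M}$, and the induced dashed arrow is exactly $\phi_{*}:=\ev_{N}^{-1}\circ\phi_{\#}\circ\ev_{M}$. This is manifestly a group morphism, and since its expression no longer refers to $\tilde\phi$, independence of the perturbation is automatic (alternatively one connects two generic perturbations by a generic homotopy and applies Lemma~\ref{lem:homotopy reg down}).

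The three functoriality properties then reduce to transporting, across the natural evaluation isomorphisms, the elementary facts about $\phi_{\#}$. For $\phi=\Id_{M}$ one has $\phi_{\#}=\Id$, so $\Id_{*}=\ev_{\Xi'}^{-1}\circ\ev_{\Xi}$ is an isomorphism (a composite of two isomorphisms); it is precisely the identification of the groups attached to two data $\Xi,\Xi'$ on $M$, and once those identifications are made it reads $\Id_{*}=\Id$. For composition, $(\phi\circ\psi)_{*}=\phi_{*}\circ\psi_{*}$ follows from $(\phi\circ\psi)_{\#}=\phi_{\#}\circ\psi_{\#}$ together with the cancellation $\ev_{M}\circ\ev_{M}^{-1}=\Id$ on the intermediate group (using the $\Id_{*}$ identification if the data chosen on $M$ differ between the two factors). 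Finally, based-homotopy invariance is immediate: if $\phi\simeq\psi$ rel $\star$ then $\phi_{\#}=\psi_{\#}$, whence $\phi_{*}=\psi_{*}$ by the formula.

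The one genuinely delicate point, which I would isolate at the outset, is the key identity itself. It requires knowing that for a generic perturbation $\tilde\phi$ the constrained ends $\tilde\phi\circ\ev_{+}$ (equivalently, the hybrid fibre products $\bMhyb[\phi](\cdot,\cdot)$) are transverse to all the evaluation maps in $N$, so that the crocodile walk $\Theta_{N,\Xi'}$ is actually defined on $\tilde\phi\circ\ell_{\sigma}$, and then on invoking Proposition~\ref{prop:croco homotopy} in the target. Everything downstream is formal bookkeeping with the two evaluation isomorphisms.
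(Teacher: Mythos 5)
Your proposal is correct and follows essentially the same route as the paper: the paper's proof consists precisely of the commutative square with the two evaluation isomorphisms of Theorem \ref{thm:pi1} as vertical arrows and $\phi^{(top)}_{*}$ on the bottom, with commutativity justified by the fact that crocodile walks preserve the homotopy class (Proposition \ref{prop:croco homotopy}), exactly your ``key identity''. Your write-up merely makes explicit the factoring argument through the quotients and the transversality caveat for the hybrid moduli spaces, both of which the paper leaves implicit.
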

\begin{proof}
  The strategy of proof of the theorem \ref{thm:pi1} (i.e. using
  upward crocodile walks to cross transversality issues affecting the
  downward crocodile walk) could be adapted in this new setting to derive
  the required properties. However, they are all a consequence of the
  following commutative diagram, and similar properties for the
  topological direct map:
  $$
  \begin{tikzcd}
    \pi_{1}(M,\star;\Xi )\ar[r,"\tilde{\phi}_{*}"]
    \ar[d,"\ev_{*}"',"\sim"{rotate=90,anchor=north}] &
    \pi_{1}(N,\star';\Xi')
    \ar[d,"\ev_{*}"',"\sim"{rotate=90,anchor=north}]\\
    \pi^{(top)}_{1}(M,\star)\ar[r,"\phi^{(top)}_{*}"]    &\pi^{(top)}_{1}(N,\star')\\
  \end{tikzcd}
  $$
  Here, commutativity of this diagram holds because crocodile walks
  preserve the homotopy class: given a Morse loop $\gamma$ in $M$,
  $\ev_{+}(\Theta_{N,\Xi'}(\tilde{\phi}(\ev_{+}(\gamma))))$ is homotopic
  to $\tilde{\phi}(\ev_{+}(\gamma))$, which in turn is homotopic to
  $\phi(\ev_{+}(\gamma))$.
\end{proof}

\subsection{Moving the base point}

The previous section provided a morphism associated to a change of
auxiliary data. To complete the picture, we now describe what happens
when the base point is moved along a path.

Recall that a path $\gamma$ is said to be regular with respect to a Morse
data $\Xi=(f,X)$ if the moduli spaces $\cM(\gamma,M)$ and $\cM(\gamma,x)$
are cutout transversely, i.e. if it is transverse to all the evaluation
maps $\cM(M,M)\xto{\ev_{-}}M$ and $\cM(M,x)\xto{\ev_{-}}M$ for
$x\in\Crit(f)$.

\begin{theorem}
  Let $M$ be a closed manifold and $\gamma_{\star}:[0,1]\to M$ a
  continuous path from a point $\star$ to another point $\star'$ in $M$.

  Given a regular stable Morse data $\Xi=(f,X,\star)$ such that
  $(f,X,\star')$ is also regular, and a smooth perturbation
  $\tilde{\gamma}_{\star}$ that is regular with respect to $\Xi$, there is
  group a isomorphism
  $\phi_{\gamma_{\star}*}:\pi_{1}(M,\star;\Xi)\to\pi_{1}(M,\star';\Xi)$
  that is defined out of bouncing trajectories, and does not depend on
  the perturbation $\tilde{\gamma}$.
\end{theorem}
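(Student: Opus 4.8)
The plan is to follow the template of the functoriality theorem, replacing the map $\tilde{\phi}$ by the topological change of base point along $\tilde{\gamma}_{\star}$, and then to read off every property from the commutative square relating the Morse and topological fundamental groups. First I would define the morphism at the level of loops. Given a Morse loop $\sigma=\sigma_{1}\cdots\sigma_{k}\in\MSLoops(\Xi,\star)$, its evaluation $\ev(\sigma)$ is a based loop at $\star$; I conjugate it by the regular perturbation to form the loop
$$
\eta_{\sigma}=\tilde{\gamma}_{\star}^{-1}\ast\ev(\sigma)\ast\tilde{\gamma}_{\star},
$$
which is based at $\star'$, and push it down by the downward crocodile walk based at $\star'$, setting $\Psi(\sigma)=\Theta_{\star'}(\eta_{\sigma})$. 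Regularity of $(f,X,\star')$ and of $\tilde{\gamma}_{\star}$ is exactly what guarantees that $\eta_{\sigma}$ is transverse to all the evaluation maps needed to define $\Theta_{\star'}$. Since each step $\sigma_{i}$ is a component of some $\cM(y_{i},M)$, i.e. an augmentation like trajectory ending on $M\times\R^{N_{+}}$, while the crocodile walk attaches to each point $\eta_{\sigma}(\tau)\in M$ a trajectory of $\cM(M,M)$ with $\ev_{-}=\eta_{\sigma}(\tau)$, gluing the two at their common image point in $M$ realizes the whole construction through moduli spaces of bouncing trajectories $\bM(\cdot,\cdot)$: this is the promised description ``out of bouncing trajectories''.

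Next I would identify this construction abstractly. By Proposition \ref{prop:croco homotopy} the crocodile walk preserves the based homotopy class, so
$$
\ev_{\star'*}\bigl([\Psi(\sigma)]\bigr)=[\eta_{\sigma}]=c_{\gamma_{\star}}\bigl(\ev_{\star*}[\sigma]\bigr),
$$
where $c_{\gamma_{\star}}\colon\pi^{(top)}_{1}(M,\star)\to\pi^{(top)}_{1}(M,\star')$, $[\alpha]\mapsto[\tilde{\gamma}_{\star}^{-1}\ast\alpha\ast\tilde{\gamma}_{\star}]$, is the classical topological change of base point isomorphism. Because $\ev_{\star'*}$ is injective (Theorem \ref{thm:pi1} applied at $\star'$, which is legitimate since $(f,X,\star')$ is regular), the class $[\Psi(\sigma)]$ is \emph{determined} by $[\eta_{\sigma}]$, hence by $c_{\gamma_{\star}}(\ev_{\star*}[\sigma])$ alone. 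In particular $[\Psi(\sigma)]$ depends only on the class of $\sigma$ in $\pi_{1}(M,\star;\Xi)$, so $\Psi$ descends to a well defined map $\phi_{\gamma_{\star}*}\colon\pi_{1}(M,\star;\Xi)\to\pi_{1}(M,\star';\Xi)$, and the homotopy invariance used here is precisely the one already extracted from Lemmas \ref{lem:homotopy reg down} and \ref{lem:homotopy reg up} in the proof of Theorem \ref{thm:pi1}.

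By construction $\phi_{\gamma_{\star}*}$ fits into the commutative square
$$
\begin{tikzcd}
  \pi_{1}(M,\star;\Xi)\ar[r,"\phi_{\gamma_{\star}*}"]
  \ar[d,"\ev_{*}"',"\sim"{rotate=90,anchor=north}] &
  \pi_{1}(M,\star';\Xi)
  \ar[d,"\ev_{*}"',"\sim"{rotate=90,anchor=north}]\\
  \pi^{(top)}_{1}(M,\star)\ar[r,"c_{\gamma_{\star}}","\sim"']&\pi^{(top)}_{1}(M,\star')
\end{tikzcd}
$$
whose vertical arrows are the isomorphisms of Theorem \ref{thm:pi1} and whose bottom arrow is an isomorphism; hence $\phi_{\gamma_{\star}*}=\ev_{\star'*}^{-1}\circ c_{\gamma_{\star}}\circ\ev_{\star*}$ is a group isomorphism (that it is a homomorphism also follows from the square, since the three other arrows are). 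Independence of the perturbation is then immediate: two regular perturbations of the same continuous path $\gamma_{\star}$ are $C^{0}$ close and therefore homotopic rel endpoints, so they induce the same $c_{\gamma_{\star}}$, and the formula above forces the same $\phi_{\gamma_{\star}*}$.

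I expect the genuine work to lie entirely in the first paragraph, not in the last two. One must check, at the chain level and with care about reparametrizations and about the zero length and bouncing boundary strata, that the augmentation like trajectories of the steps $\sigma_{i}$ really do glue with the $\cM(M,M)$ arcs produced by the crocodile walk into transversely cut out moduli spaces of bouncing trajectories, so that $\Psi$ is honestly defined out of bouncing trajectories and not merely through the topological detour; the reparametrization bookkeeping is exactly of the kind handled by Lemma \ref{lem:reparmetrization}. Once this chain level description is in place, the homomorphism property, the isomorphism property, and the perturbation independence all follow formally from the commutative square, as above.
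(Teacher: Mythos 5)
Your proposal follows exactly the paper's own route: define the map on a Morse loop $\sigma$ as $\Theta_{\star'}(\tilde{\gamma}_{\star}^{-1}\,\ev(\sigma)\,\tilde{\gamma}_{\star})$, observe that this is realized by moduli spaces of bouncing trajectories, and deduce the isomorphism property and perturbation independence from the commutative square whose vertical arrows are the evaluation isomorphisms of Theorem \ref{thm:pi1} and whose bottom arrow is the topological change-of-basepoint isomorphism. The extra details you supply (well-definedness on the quotient via injectivity of $\ev_{\star'*}$, and the homotopy rel endpoints of two regular perturbations) are correct fillings-in of steps the paper leaves implicit, not a different argument.
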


\begin{proof}
  Given a Morse loop $\gamma$ based at $\star$,
  $\Theta(\tilde{\gamma}^{-1}_{\star}\ev_{+}(\gamma)\gamma_{\star})$ is
  well defined, and can be entirely described in terms of bouncing
  trajectories.

  It induces a map $\phi_{\gamma_{\star}*}$ at the $\pi_{1}$ level, and
  the statement is a consequence of the commutative diagram
  $$
  \begin{tikzcd}
    \pi_{1}(M,\star;\Xi)\ar[r,"\phi_{\gamma_{\star}*}"]
    \ar[d,"\ev_{*}"',"\sim"{rotate=90,anchor=north}]&
    \pi_{1}(M,\star';\Xi)
    \ar[d,"\ev_{*}"',"\sim"{rotate=90,anchor=north}]\\
    \pi_{1}^{(top)}(M,\star)\ar[r]&  \pi_{1}^{(top)}(M,\star')
  \end{tikzcd}.
  $$
\end{proof}

%%%%%%%%%%%%%%%
%\SkipToHere
%%%%%%%%%%%%%%

\nocite{*}
\printbibliography

\end{document}

\subsection{Changing Morse auxiliary data}

Given a homotopy between two auxiliary data $\Xi=(f,X,\star)$ and
$\Xi'=(f',X',\star')$ a consequence of theorem \ref{thm:isomorphism} is
that the associated fundamental groups associated are isomorphic.

Although it factors through the topological fundamental group, this
isomorphism can be given an intrinsic dynamical definition: this
isomorphism is indeed essentially induced (up to base point moves) by
the composition $\Theta_{\Xi'}\circ\ev_{+}$, which can be entirely
described in terms of ``hybrid'' trajectories, i.e. bouncing trajectories
that use $\Xi$ before the bounce and $\Xi'$ after.

More precisely, fix a generic pair of generic  auxiliary data
$\Xi=(f,X,\star)$ and $\Xi'=(f',X',\star')$, and a generic path
$\gamma_{\star}$ from $\star$ to $\star'$. Here, generic pair means that
all possible pair of evaluation maps $(\ev_{+},\ev_{-})$ defined on
moduli spaces associated to $\Xi$ and $\Xi'$, 
$$
\begin{tikzcd}[row sep=1ex,column sep=8ex]
  \cM_{\Xi}(x,M)
  \arrow[to=M,start anchor={east},bend left=20]
  & &
  \cM_{\Xi'}(M,x')
  \arrow[to=M,start anchor={west},bend right=20]\\
\\
  \cM_{\Xi}(\star,M)
  \arrow[to=M,start anchor={east},"\ev_{+}"]
  &|[alias=M]| M &
  \cM_{\Xi'}(M,\star')
  \arrow[to=M,start anchor={west},"\ev_{-}"']\\
\\  
  \cM_{\Xi}(M,M)
  \arrow[to=M,start anchor={east},bend right=20]
  & &
  \cM_{\Xi'}(M,M)
  \arrow[to=M,start anchor={west},bend left=20]
\end{tikzcd},
$$
are transverse (and by classical transversality arguments, this is indeed
a generic condition).

In other words, it means that all the moduli spaces of bouncing
trajectories $\bMhyb(\cdot,\cdot)$ that use the data $\Xi$ before the
bounce, and $\Xi'$ after, are cutout transversely. We call such
trajectories hybrid trajectories.

Genericity of $\gamma_{\star}$ means that it is
transverse to the evaluation $\cM(M,M)\xto{\ev_{-}}M$, so that
$\cM(\gamma,M)$ is cut out transversely.

In particular, the steps through $\star'$ associated to the auxiliary
data $\Xi$ are well defined. Adding them to the steps already associated
to $\Xi$, we obtain a notion of Morse path from $\star$ to $\star'$.
Moreover, the crocodile walk along the path $(\star_{s})0\leq s\leq 1$
defines a Morse path $\gamma_{\star}$ from $\star$ to $\star'$.

\begin{figure}
  \centering
  \begin{tikzpicture}[
    relative, out=rand*10, in=180-rand*10,
    decoration={segment length=40pt, amplitude=30pt},
    croco/start=first,croco/end=last,
    croco/trajstyle/.style={blue,->-},
    croco/verticality=0,
    croco/proba=.66,
    croco/save nodes=true,
    croco/save node prefix=AA,
    ]
    \begin{scope}      
      \coordinate(s1) at ( 0,0);
      \coordinate(s2) at (10,0);
      \pgfmathsetseed{12}
      \draw[croco](s1)--(s2);
      \begin{pgfonlayer}{foreground}
        \foreach \i in {2,3,8}
        \node[red,yscale=-1] at (AA\i){\large$\star$};
      \end{pgfonlayer}
    \end{scope}
    \end{tikzpicture}
  \caption{A Morse path from one base point to another.}
  \label{fig:pushstep}
\end{figure}

Then, given a Morse loop associated to $\Xi$,
$\gamma'=\Theta_{\Xi'}\circ\ev(\gamma_{\star}^{-1}\gamma\gamma_{\star})$
is a Morse path associated to $\Xi'$, and that can be derived from
$\gamma$ exclusively by means of moduli spaces of hybrid
trajectories.

\end{document}

(setq TeX-command-extra-options "-shell-escape")
% Local Variables:
% TeX-command-extra-options: "-shell-esacpe"
% End: